
\documentclass[11pt]{article}
\usepackage{amsmath,amsfonts,amsthm,amscd,amssymb,graphicx,mathscinet, color}

\usepackage[utf8]{inputenc}
\usepackage{amsbsy}
\usepackage{url}
\usepackage{graphicx}
\usepackage[margin=1.25in,centering]{geometry}


\setlength{\parskip}{1.0ex plus0.2ex minus0.2ex}
\setlength{\parindent}{0.0in}


\newtheorem{Lemma}{Lemma}[section]

\newtheorem{Hypothesis}[Lemma]{Hypothesis}
\newtheorem{Proposition}[Lemma]{Proposition}
\newtheorem{Remark}[Lemma]{Remark}
\newtheorem{Theorem}{Theorem}

\newenvironment{Proof}[1][.]%
 {\begin{trivlist}\item[]\textbf{Proof#1 }}%
 {\hspace*{\fill}$\rule{0.3\baselineskip}{0.35\baselineskip}$\end{trivlist}}

 {\begin{trivlist}\item[]\textbf{Acknowledgments }}{\end{trivlist}}


\makeatletter\@addtoreset{equation}{section}\makeatother



\newcommand{\cA}{\mathcal{A}}    
\newcommand{\cD}{\mathcal{D}}   
\newcommand{\cE}{\mathcal{E}}    
\newcommand{\cH}{\mathcal{H}}    
\newcommand{\cM}{\mathcal{M}}    
\newcommand{\cT}{\mathcal{T}}    
\newcommand{\cU}{\mathcal{U}}    
\newcommand{\cV}{\mathcal{V}}    
\newcommand{\cW}{\mathcal{W}}

\newcommand{\LL}{\vartheta}    

\newcommand{\bE}{\mathbb{E}}

\newcommand{\R}{\mathbb{R}}             
\newcommand{\Z}{\mathbb{Z}}             

\newcommand{\rmd}{\mathrm{d}}           

\bibliographystyle{alpha}


\begin{document}

\title{Validated spectral stability via conjugate points}

\author{Margaret Beck\footnotemark[1]  \and Jonathan Jaquette\footnotemark[2] }

\date{\today}
\maketitle

\renewcommand{\thefootnote}{\fnsymbol{footnote}}

\footnotetext[1]{Department of Mathematics and Statistics, 
Boston University; mabeck@bu.edu.}

\footnotetext[2]{Department of Mathematics and Statistics, 
Boston University; jaquette@bu.edu.}

\begin{abstract}
Classical results from Sturm-Liouville theory state that the number of unstable eigenvalues of a scalar, second-order linear operator is equal to the number of associated conjugate points. Recent work has extended these results to a much more general setting, thus allowing for spectral stability of nonlinear waves in a variety of contexts to be determined by counting conjugate points. However, in practice, it is not yet clear whether it is easier to compute conjugate points than to just directly count unstable eigenvalues. We address this issue by developing a framework for the computation of conjugate points using validated numerics. Moreover, we apply our method to a parameter-dependent system of bistable equations and show that there exist both stable and unstable standing fronts. This application can be seen as complimentary to the classical result via Sturm-Louiville theory that in scalar reaction-diffusion equations pulses are unstable whereas fronts are stable, and to the more recent result of ``Instability of pulses in gradient reaction-diffusion systems: a symplectic approach," by Beck et. al., that symmetric pulses in reaction-diffusion systems with gradient nonlinearity are also necessarily unstable. 
\end{abstract}


\section{Introduction}

Understanding the stability of solutions is important for predicting the long-time behavior of partial differential equation models of physical systems. A key step is analyzing spectral stability, which means determining whether the linearization $\mathcal{L}$ of the PDE about a given solution of interest has any spectral elements with positive real part. In many cases, it is useful to divide the spectrum of $\mathcal{L}$ into two disjoint subsets, the essential and point spectrum: $\sigma(\mathcal{L}) = \sigma_\mathrm{ess}(\mathcal{L}) \cup \sigma_\mathrm{pt}(\mathcal{L})$. The essential spectrum is often relatively easy to compute, so determining spectral stability amounts to looking for unstable eigenvalues \cite{KapitulaPromislow13}. 

Scalar, second-order eigenvalue problems can be understood by classical Sturm-Liouville theory. A simple such problem is of the form
\begin{equation}\label{E:sl}
\mathcal{L} u := u_{xx} + q(x) u = \lambda u, \qquad x \in [a,b], \qquad u(a) = u(b) = 0, \qquad \lambda \in \mathbb{R},
\end{equation}
with $q(x)$ smooth, but the following discussion is applicable to a much larger class of equations and boundary conditions \cite{BirkhoffRota89}. Using Pr\"ufer coordinates, which for this equation are just the polar coordinates 
\[
u(x; \lambda) = r(x; \lambda) \sin [\theta(x; \lambda)], \qquad u_x (x; \lambda) = r(x; \lambda) \cos [\theta(x; \lambda)],
\]
one obtains a system for $(r, \theta)$ given by
\begin{equation}\label{E:rtheta}
r' = r(1 + \lambda - q(x)) \cos \theta \sin \theta, \qquad \theta' = \cos^2\theta + (q(x) - \lambda)\sin^2\theta.
\end{equation}
Eigenvalues are values of $\lambda$ for which a solution of \eqref{E:sl} exists. In \eqref{E:rtheta}, $\{r = 0\}$ is invariant, the $\theta$ equation decouples, and so the existence of an eigenvalue can be reformulated as follows. Consider the initial condition $\theta(a; \lambda) = 0$ and flow forward to $x = b$; $\lambda$ is an eigenvalue if and only if $\theta(b; \lambda) \in \{j\pi\}_{j\in\mathbb{Z}}$. The form of \eqref{E:rtheta} implies that, if $\lambda$ is negative and sufficiently large, then $\theta' > 0$ and $\theta$ will be forced to oscillate. Thus, there exists at least one eigenvalue, which we label $\lambda_k$, such that $\theta(b; \lambda_k) = (k+1)\pi$. Increasing $\lambda$ reduces the oscillation in $\lambda$, and since $\theta(b; \lambda)$ varies continuously in $\lambda$, we obtain an increasing sequence of eigenvalues $\{\lambda_j\}_{j=0}^\infty$ such that $\theta(b; \lambda_j) = (j+1)\pi$. For $\lambda > \lambda_0$, $\theta$ no longer has enough ``time" to complete one (half) oscillation, and so no more eigenvalues exist. One obtains a result regarding the sequence of eigenvalues $\lambda_0 > \lambda_1 > \dots$ and corresponding eigenfunctions $u_0, u_1, \dots$ that says that $u_k$ has exactly $k$ zeros. 

From a stability perspective, this result is very powerful, as can be seen by the following example. Suppose $\mathcal{L}$ is the linearization of a scalar reaction-diffusion equation, $v_t = v_{xx} + f(v)$, about a stationary solution $\varphi$. Then $\mathcal{L} \varphi ' = 0$. In other words, $u = \varphi'$ is the eigenfunction associated with eigenvalue zero. If $\varphi$ is a pulse, then $\varphi'$ has exactly one zero. Hence, $\varphi' = u_1$, $\lambda_1 = 0$, $\lambda_0 > 0$ and an unstable eigenvalue exists. On the other hand, if $\varphi$ is a monotonic front, then $\lambda_0 = 0$ and no unstable eigenvalues exist. It is worth noting that this (in)stability result is forced by the topology, and it does not depend on the details of the function $f$ or the underlying solution $\varphi$. 

This problem can be reformulated in terms of conjugate points. Instead of \eqref{E:sl}, consider the same problem but on a variable domain with $\lambda = \lambda_*$ fixed,
\begin{equation}\label{E:sl-conj}
u_{xx} + q(x) u = \lambda_* u, \qquad x \in [a,s], \qquad u(a) = u(s) = 0, \qquad a \leq s \leq b.
\end{equation}
A conjugate point is a value of $s$ such that the above equation has a solution. Equation \eqref{E:rtheta} is still relevant, with the value of $s$ controlling how much ``time" $\theta$ has to oscillate. If we choose $\lambda_* = \lambda_k$, we find $s_k := b$ is a conjugate point. Decreasing $s$ produces a sequence of conjugate points $\{s_j\}_{j=0}^k$, with $\theta(s_j; \lambda_k) = (j+1)\pi$. This leads to a relationship between conjugate points and eigenvalues: the number of conjugate points that exist for some fixed $\lambda_*$ is equal to the number of eigenvalues $\lambda$ such that $\lambda > \lambda_*$. Choosing $\lambda_* = 0$ allows one to count unstable eigenvalues by counting conjugate points. In the scalar case, counting conjugate points is equivalent to counting the number of zeros of the eigenfunction associated with the zero eigenvalue. 

Because the above perspective relies on the reformulation of the eigenvalue problem in terms of polar coordinates, it is not immediately clear how to generalize this to systems of equations, ie an eigenvalue problem $\mathcal{L} u = \lambda u$ with $u \in \mathbb{R}^n$ and $n > 1$. It turns out that this generalization is provided by a topological invariant called the Maslov index \cite{Arnold67,Arnold85, Bott56}. Given a symplectic form $\omega$ on $\mathbb{R}^{2n}$, ie a nondegenerate, bilinear, skew-symmetric form, the set of Lagrangian subspaces $\ell$ in $\mathbb{R}^{2n}$ with respect to $\omega$ is defined by
\begin{equation}\label{E:ln}
\Lambda(n) = \{ \ell \subset \mathbb{R}^{2n}: \mathrm{dim}(\ell) = n, \quad \omega(U, W) = 0 \quad \forall \quad U, W \in \ell \}. 
\end{equation}
It turns out that the fundamental group of $\Lambda(n)$ is the integers, $\pi_1(\Lambda(n)) = \mathbb{Z}$. This allows one to define a phase in $\Lambda(n)$, similar to the angle $\theta$ above, and the Maslov index counts the winding of this phase. 

There are many recent results that use the Maslov index to prove that one can count unstable eigenvalues by counting conjugate points. See, for example, \cite{BairdCornwallCox20, BeckCoxJones18, CoxJonesLatushkin16, CornwallJones20, CoxJonesMarzuola15, DengJones11, Howard20, HowardLatushkinSukhtayev18, JonesLatushkinMarangell13}. The exact statements of the results and their proofs depend on the details of the situation being considered. Notably, some of the results are valid on spatial domains $\Omega \subset \mathbb{R}^d$ with $d \geq 1$ \cite{CoxJonesLatushkin16,CoxJonesMarzuola15, DengJones11}. The vast majority of techniques concerning the stability of nonlinear waves are valid primarily in one spatial domain, and so results in higher space dimensions are of particular importance. In fact, it was the multidimensional result \cite{DengJones11} that inspired the recent extensive activity in using the Maslov index to study stability. Although these results are very interesting in their own right, if it is just as difficult to count conjugate points as it is to count unstable eigenvalues, then from a stability perspective not much has been gained. This work addresses exactly this issue.

The primary tool for studying spectral stability of nonlinear waves is arguably the Evans function \cite{Sandstede02}. It has been used extensively in one-dimensional domains, and in multidimensional domains with a distinguished direction, such as a channel $\mathbb{R} \times \Omega \subset \mathbb{R} \times \mathbb{R}^{d-1}$ with $\Omega$ compact \cite{OhSandstede10}. Although there are some results in more general multidimensional domains \cite{DengNii08}, it is not clear how useful they are in practice. This is one key reason why the possibility of using the Maslov index to study stability, in particular for multidimensional problems, is so interesting. 

There are very few results where the Maslov index has been used to analyze stability in a context where the Evans function appears to not be similarly applicable, and all are in one spatial dimension. For example, \cite{ChenHu14} applies to a specific FitzHugh-Nagumo model, and it relies heavily on the activator-inhibitor structure present there. Another, \cite{BeckCoxJones18}, applies to systems of reaction-diffusion equations with gradient nonlinearity. This latter result is used to show that symmetric pulse solutions of such systems are necessarily unstable, which is the generalization of the above-mentioned result from Sturm-Liouville theory to the system case. 

The main goal of this paper is to provide a framework for rigorously counting conjugate points using validated numerics in the setting of \cite{BeckCoxJones18}. This will allow for the Maslov index to be used to determine stability in more applications. Furthermore, we argue that, when applicable, this a more efficient method for determining stability than the Evans function. This is because counting unstable eigenvalues using the Evans function requires computing its winding number as $\lambda$ varies in a contour in the complex plane, whereas counting conjugate points requires computation only for one fixed value $\lambda = 0$. For comparison, there appear to be only two instances of validated numerics for the Evans function, and in those works stability calculations are reported to take 425 hours \cite{ArioliKoch15} and 4 hours \cite{BarkerZumbrun16}, respectively.  Our method for rigorously counting conjugate points currently takes less than half a minute, a significant improvement. It may be possible to speed up Evans function computations, eg using \cite{BarkerNguyenSandstede18}, but this has not yet been done.  
For further reading on the validated computation of stability in PDEs we refer to the book  \cite{nakao2019numerical} and references cited therein.

The general setting in which our method will apply is that of  \cite{BeckCoxJones18}, which we now present. Consider an eigenvalue problem
\begin{equation}\label{E:eval}
\lambda u = Du_{xx} + \mathcal{M}(x) u =: \mathcal{L} u, \qquad u \in \mathbb{R}^n, \qquad x \in \mathbb{R}
\end{equation}
where $D$ is a diagonal matrix with positive entries and $\mathcal{M}$ satisfies Hypothesis \ref{H:potential}. 
\begin{Hypothesis} \label{H:potential} The matrix $\mathcal{M}(x)$ satisfies
\begin{list}{}{\itemsep 0.5mm \topsep 0.5mm \itemindent 0mm}
\item (H1)  $\mathcal{M} \in C(\mathbb{R}, \mathbb{R}^{n\times n})$ and is for each $x$ a symmetric matrix with real entries. 
\item (H2) The limits $\lim_{x\to\pm\infty}\mathcal{M}(x) =: \mathcal{M}_\pm$ exist, are negative definite, and have distinct eigenvalues. 
\item (H3) There exist constants $C_M$ and $\eta_M$ such that $|\mathcal{M}(x) - \mathcal{M}_\pm| \leq C_M e^{-\eta_M|x|}$ for $x \in \mathbb{R}^\pm$, respectively.  
\end{list}
\end{Hypothesis}
Moreover, we require that the operator $\mathcal{L}$ has a simple eigenvalue at zero.
\begin{Hypothesis}\label{H:simple}
The operator $\mathcal{L}$, defined on $X = L^2(\mathbb{R})$ with domain $Y = H^2(\mathbb{R})$, satisfies
\begin{list}{}{\itemsep 0.5mm \topsep 0.5mm \itemindent 0mm}
\item (H4) $\lambda = 0$ is an eigenvalue of geometric and algebraic multiplicity one.
\end{list}
\end{Hypothesis}
We will study the spectrum of $\mathcal{L}$ on $X = L^2(\mathbb{R})$ with domain $Y = H^2(\mathbb{R})$. Hypothesis (H1) implies that $\mathcal{L}$ is self adjoint, and so we can take $\lambda \in \mathbb{R}$. This is necessary for the symplectic framework used below. If, for example, $\mathcal{L}$ is the linearization of a system of reaction-diffusion equations with gradient nonlinearity,
\begin{equation}\label{E:gradrd}
v_t = Dv_{xx} + \nabla G(v), \qquad G \in C^2(\mathbb{R}^n, \mathbb{R}),
\end{equation}
about a stationary solution $\varphi$, then (H1) holds. The assumption in (H2) that $\mathcal{M}_\pm$ are negative definite is necessary and sufficient for ensuring that the essential spectrum of $\mathcal{L}$ lies in the open left half plane and is bounded away from the imaginary axis. This is a natural assumption; if the essential spectrum is unstable, then stability has already been determined. If the essential spectrum is only marginally stable, meaning it lies in the left half plane but touches the imaginary axis, this can also be handled using the modification in \cite[Remark 1.3]{BeckCoxJones18}, but we do not consider this further here.  
 Assumption (H3) appears at first glance to be stronger than the corresponding hypothesis stated in \cite{BeckCoxJones18}, where it is only required that the functions $x \to \mathcal{M}(x) - \mathcal{M}_\pm$ be in $L^1(\mathbb{R}^\pm)$, which implies that the associated first order eigenvalue problem has exponential dichotomies on the half lines. However, for the primary setting of interest here, \eqref{E:gradrd}, the fact that $\mathcal{M}_\pm = \nabla^2G(\varphi_\pm) < 0$ implies that the underlying solution $\varphi$ has constant limits $\varphi_\pm = \lim_{x \to \pm \infty}\varphi(x)$ that are approached exponentially fast, which in turn implies (H3). It should be possible to extend our method to the $L^1$ case by adjusting some of the estimates for the constants $L_\pm$ that appear below. Hypothesis (H4) did not appear in \cite{BeckCoxJones18}. If $\varphi$ is a stationary solution to \eqref{E:gradrd}, then $\varphi'$ will be an eigenfunction with eigenvalue zero. Hence, part of (H4) is natural. The additional assumption that zero is a simple eigenvalue plays a crucial role below, particularly in the determination of $L_+$ in \S \ref{S:L_+}. It is likely possible to adapt our method to cases with higher multiplicity, but we have not yet explored that further. In particular, at the moment it is not clear the extent to which our method could be used to study bifurcations. 

The eigenvalue problem \eqref{E:eval} can be written as a first order eigenvalue problem of the form 
\begin{equation}\label{E:eval-sys}
U_x = J \mathcal{B}(x; \lambda) U, \qquad U \in \mathbb{R}^{2n}, \qquad J = \begin{pmatrix} 0 & -I_n \\ I_n & 0 \end{pmatrix}, \qquad \lambda \in \mathbb{R},
\end{equation}
where $U = (u, D u_x)$ and 
\begin{equation}\label{E:defB}
\mathcal{B}(x; \lambda) = \begin{pmatrix} \lambda - \mathcal{M}(x)  & 0 \\ 0 & -D^{-1} \end{pmatrix}
\end{equation}
is symmetric. Consider the canonical symplectic form on $\mathbb{R}^{2n}$ defined by
\begin{equation}\label{E:omega-J}
\omega: \mathbb{R}^{2n} \times \mathbb{R}^{2n} \to \mathbb{R}, \qquad \omega(U, W) = \langle U, JW \rangle, 
\end{equation}
where $\langle \cdot, \cdot \rangle$ is the usual inner product in $\mathbb{R}^{2n}$. 
We now connect the evolution of \eqref{E:eval-sys} with the set of Lagrangian planes defined in \eqref{E:ln}.

Assumption (H2) implies that the limits $\lim_{x \to \pm \infty} J\mathcal{B}(x; \lambda) = J \mathcal{B}_\pm(\lambda)$ exist and are hyperbolic. In order for $U$ to correspond with an eigenfunction, we therefore need $|U(x; \lambda)| \to 0$ as $x \to \pm \infty$. Let $\mathbb{E}_-^u(x; \lambda)$ denote the subspace of solutions that is asymptotic, as $x \to -\infty$, to the unstable subspace of $J \mathcal{B}_-(\lambda)$. This is exactly the subspace of solutions that decay as $x \to -\infty$. It turns out that this subspace is Lagrangian: $\mathbb{E}_-^u(x; \lambda) \in \Lambda(n)$ for each $(x, \lambda) \in \mathbb{R}^2$. To see this, notice that if $U, W \in \mathbb{E}_-^u(x; \lambda)$, then
\[
\frac{d}{dx} \omega(U(x), W(x)) = \langle U', JW \rangle + \langle U, JW' \rangle = \langle J\mathcal{B}U, JW \rangle + \langle U, J^2\mathcal{B}W \rangle = 0,
\]
since $J^{-1} = J^* = -J$ and $\mathcal{B}^* = \mathcal{B}$. Since $U(x), W(x) \to 0$ as $x \to -\infty$, we see $\omega(U(x), W(x)) = 0$ for all $x$. The fact that $\mathrm{dim}(\mathbb{E}_-^u) = n$ follows from assumptions (H1) and (H2); see the proof of Lemma \ref{lem:no-asym-conj} below. The subspace known as the Dirichlet subspace is also Lagrangian,
\begin{equation}\label{E:dirichlet}
\mathcal{D} = \{ (u, w) \in \mathbb{R}^{2n}: u = 0\}, \qquad \mathcal{D} \in \Lambda(n).
\end{equation}
For fixed $\lambda = 0$, conjugate points are defined to be values of $s$ such that there is a nontrivial intersection between these two Lagrangian planes: $\mathbb{E}_-^u(s; 0) \cap \mathcal{D} \neq \{0\}$. See \cite[Def 2.7]{BeckCoxJones18}. 

\begin{Theorem}\label{thm:prev-thm}\cite[Thm 3.1, Thm 4.1]{BeckCoxJones18}. The number of positive eigenvalues of $\mathcal{L}$, defined in \eqref{E:eval}, is equal to the number of conjugate points, ie the number of values $s \in \mathbb{R}$ such that $\mathbb{E}_-^u(s; 0) \cap \mathcal{D} \neq \{0\}$. Moreover, there exists an $L_+^*$ sufficiently large such that, for any $L_+ > L_+^*$, the number of conjugate points in $(-\infty, \infty)$ is equal to the number of conjugate points in $(-\infty, L_+)$. 
\end{Theorem}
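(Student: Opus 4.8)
The plan is to prove both statements using the Maslov index of paths of Lagrangian planes with respect to the fixed reference plane $\mathcal{D}$ of \eqref{E:dirichlet}. Recall that to a continuous path $\ell:[a,b]\to\Lambda(n)$ one associates an integer $\mathrm{Mas}(\ell,\mathcal{D})$ that counts, with signs (and half-integer endpoint contributions), the parameters $t$ at which $\ell(t)\cap\mathcal{D}\neq\{0\}$; it is invariant under homotopies fixing the endpoints and additive under concatenation. The idea is to regard $\mathbb{E}_-^u(x;\lambda)$ as a two-parameter family of Lagrangian planes and to apply homotopy invariance around the boundary of a rectangle $R=[x_-,x_+]\times[0,\lambda_\infty]$ in the $(x,\lambda)$-plane: the Maslov index of $\partial R$ is zero, and evaluating it on the four sides will equate the number of crossings on the side $\lambda=0$ (the conjugate points) with the number of crossings on the side $x=x_+$, which for $x_+$ large counts the positive eigenvalues of $\mathcal{L}$.

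First I would record two ``no asymptotic conjugate point'' facts. As $x\to-\infty$, Lemma \ref{lem:no-asym-conj} gives that $\mathbb{E}_-^u(x;\lambda)$ converges, uniformly for $\lambda$ in compact sets, to the unstable eigenspace of $J\mathcal{B}_-(\lambda)$; this limit is Lagrangian and, by the block structure \eqref{E:defB}, is spanned by vectors of the form $(u_j,\mu_j Du_j)$ with the $u_j$ forming a basis of $\mathbb{R}^n$, so it intersects $\mathcal{D}=\{u=0\}$ only in $\{0\}$. Hence there is $x_-<0$ with $\mathbb{E}_-^u(x;\lambda)\cap\mathcal{D}=\{0\}$ for all $x\le x_-$ and all $\lambda$ in the relevant range. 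For the right end, taking $\lambda=0$, Hypothesis (H4) implies that $\mathbb{E}_-^u(x;0)$ contains exactly a one-dimensional space of solutions that decay as $x\to+\infty$, namely the span of the eigenfunction; together with the exponential dichotomy of \eqref{E:eval-sys} on $\mathbb{R}^+$ (guaranteed by (H2), (H3)) this forces $\mathbb{E}_-^u(x;0)$ to converge as $x\to+\infty$ to a Lagrangian plane $\ell_+$, which by the same block computation satisfies $\ell_+\cap\mathcal{D}=\{0\}$. Choosing $L_+^*$ so large that $\mathbb{E}_-^u(x;0)$ stays, for $x\ge L_+^*$, inside a neighborhood of $\ell_+$ on which intersection with $\mathcal{D}$ is trivial, we conclude that no conjugate point lies in $[L_+^*,\infty)$; since (by the next paragraph) crossings are isolated and by the above are confined to $(x_-,L_+^*)$, there are only finitely many of them. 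This already establishes the ``Moreover'' assertion.

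Next I would verify monotonicity. At a crossing $\ell(t_0)\cap\mathcal{D}\neq\{0\}$ the local picture is controlled by the crossing form, which for the family $\mathbb{E}_-^u(\cdot\,;\lambda)$ in the $x$-direction and in the $\lambda$-direction reduces to a quadratic form assembled from $\mathcal{B}$ and $\partial_\lambda\mathcal{B}=\mathrm{diag}(I_n,0)$; using that $D$ is positive definite one checks that these forms are sign-definite, so every crossing is transverse and contributes with the same sign. Consequently, along the side $\lambda=0$ the Maslov index equals, up to a global sign, the number of conjugate points, and along the side $x=x_+$ it equals the number of $\lambda\in[0,\lambda_\infty]$ with $\mathbb{E}_-^u(x_+;\lambda)\cap\mathcal{D}\neq\{0\}$; for $x_+\gg 1$ the exponential dichotomy on $\mathbb{R}^+$ lets one identify the latter condition with $\lambda\in\sigma_\mathrm{pt}(\mathcal{L})$, and (H2) guarantees that $\sigma_\mathrm{pt}(\mathcal{L})\cap[0,\infty)$ is finite, so with $\lambda_\infty$ above all positive eigenvalues this side counts exactly the positive eigenvalues of $\mathcal{L}$. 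It remains to dispose of the other two sides: the side $x=x_-$ carries no crossings by the first step, and the side $\lambda=\lambda_\infty$ carries none provided $\lambda_\infty$ is also taken large enough that a positivity estimate — the system analogue of $\theta'>0$ in \eqref{E:rtheta}, using that $\lambda-\mathcal{M}(x)$ is positive definite for $\lambda$ large — keeps $\mathbb{E}_-^u(x;\lambda_\infty)$ uniformly away from $\mathcal{D}$ for all $x\in[x_-,x_+]$. Homotopy invariance then yields $0=\mathrm{Mas}(\partial R,\mathcal{D})=\pm\big[(\#\,\text{conjugate points})-(\#\,\text{positive eigenvalues of }\mathcal{L})\big]$, which is the theorem.

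The step I expect to be the main obstacle is the analysis at $x\to+\infty$ with $\lambda=0$: establishing that $\mathbb{E}_-^u(x;0)$ actually converges to a limiting Lagrangian plane and that this plane avoids $\mathcal{D}$. The difficulty is that $\mathbb{E}_-^u$ is not a generic $n$-plane for the asymptotic flow $J\mathcal{B}_+(0)$ — it contains the one decaying eigenfunction direction — so it does not simply rotate onto the unstable eigenspace of $J\mathcal{B}_+(0)$, and one must track how this stable direction survives in the limit. This is exactly where Hypothesis (H4) is indispensable, and it is also the mechanism behind the threshold $L_+^*$ in the ``Moreover'' statement.
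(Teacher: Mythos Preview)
The paper does not give its own proof of this theorem: immediately after the statement it writes ``This result is proven in \cite{BeckCoxJones18} using the Maslov index'' and refers the reader there for both the definition of the Maslov index and the argument. Your sketch is precisely the Maslov-index-on-a-rectangle argument that the cited reference employs, so your approach agrees with the one the paper invokes. Your identification of the $x\to+\infty$ endpoint as the delicate step --- the decaying eigenfunction direction preventing $\mathbb{E}_-^u(x;0)$ from simply rotating onto the unstable eigenspace of $J\mathcal{B}_+(0)$ --- is exactly right, and is in fact the point the present paper devotes all of \S\ref{S:L_+} to making quantitative so that an explicit threshold can be computed.

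One place where the sketch is a bit too brisk is the right side $x=x_+$. The condition $\mathbb{E}_-^u(x_+;\lambda)\cap\mathcal{D}\neq\{0\}$ is not literally the eigenvalue condition $\mathbb{E}_-^u(x;\lambda)\cap\mathbb{E}_+^s(x;\lambda)\neq\{0\}$, and the passage from the former to the eigenvalue count is not a one-line exponential-dichotomy identification at a single finite $x_+$; in the cited reference the link goes through an auxiliary truncated problem (or an equivalent limiting argument) before sending $x_+\to\infty$. This does not invalidate your outline, but it is a step that needs more than a sentence to close.
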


This result is proven in \cite{BeckCoxJones18} using the Maslov index. However, we will not need to use the Maslov index here. Therefore, we refer to \cite{BeckCoxJones18} for a definition of the Maslov index, as well as for the proof of the above Theorem. 

The goal of this work is to provide a method for rigorously computing the number of conjugate points on $(-\infty, L_+)$. To do so, there are several issues that must be overcome. First, since computations will be done numerically, they can only be done on a finite interval, $(-L_-, L_+)$. The existence of the upper limit $L_+$ is given by Theorem \ref{thm:prev-thm}, but we do not know how big sufficiently large is. We must therefore show that is possible to choose a finite value $L_-$, and then prescribe a way for choosing explicit values $L_\pm$ so that we capture all of the conjugate points. In other words, we need to know when to start and stop the computation. 

Once this has been accomplished, we need an algorithm for detecting conjugate points on $(-L_-, L_+)$. This will be accomplished via the fame matrix associated with $\mathbb{E}_-^u(x; 0)$. More specifically, let 
\[
\begin{pmatrix} A_1(x) \\ A_2(x) \end{pmatrix}, \qquad A_1(x), A_2(x) \in \mathbb{R}^{n \times n}
\]
be a matrix whose columns form a basis for $\mathbb{E}_-^u(x; 0)$. We have the following Lemma, which is well known. 

\begin{Lemma}\label{lem:conj-det}
$\mathbb{E}_-^u(x; 0) \cap \mathcal{D} \neq \{0\}$ if and only if $\mathrm{det}A_1(x) = 0$.
\end{Lemma}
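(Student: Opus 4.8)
The plan is to reduce the statement to an elementary fact about the kernel of the block $A_1(x)$, using only that the columns of the frame matrix $\begin{pmatrix} A_1(x) \\ A_2(x) \end{pmatrix}$ are linearly independent (which holds since, by hypothesis, they form a basis for $\mathbb{E}_-^u(x;0)$). Every $V \in \mathbb{E}_-^u(x;0)$ can be written as $V = \begin{pmatrix} A_1(x) \\ A_2(x) \end{pmatrix} c$ for some coordinate vector $c \in \mathbb{R}^n$, and by the definition \eqref{E:dirichlet} of the Dirichlet plane, $V \in \mathcal{D}$ is precisely the condition that the first $n$ components of $V$ vanish, i.e. $A_1(x) c = 0$. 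Thus a nontrivial intersection $\mathbb{E}_-^u(x;0) \cap \mathcal{D}$ corresponds exactly to the existence of a nonzero $c$ in the kernel of $A_1(x)$.

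Concretely, for the forward direction I would take $0 \neq V \in \mathbb{E}_-^u(x;0) \cap \mathcal{D}$, express it in frame coordinates as above, observe that $c \neq 0$ because the frame columns are independent, and read off from $V \in \mathcal{D}$ that $A_1(x) c = 0$; hence $A_1(x)$ is singular, i.e. $\mathrm{det}\,A_1(x) = 0$. For the converse, given $\mathrm{det}\,A_1(x) = 0$ I would pick $0 \neq c \in \ker A_1(x)$ and set $V := \begin{pmatrix} A_1(x) \\ A_2(x) \end{pmatrix} c$; then $V \in \mathbb{E}_-^u(x;0)$ by construction, $V \in \mathcal{D}$ since its top block equals $A_1(x) c = 0$, and $V \neq 0$ again by linear independence of the frame columns. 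This gives the claimed equivalence.

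There is essentially no obstacle here; the only point requiring any care is that a nonzero kernel vector of $A_1(x)$ produces a genuinely nonzero element of $\mathbb{E}_-^u(x;0)$, and conversely that a nonzero element of the intersection has a nonzero coordinate vector — which is exactly where linear independence of the frame columns, equivalently $\mathrm{rank}\begin{pmatrix} A_1(x) \\ A_2(x) \end{pmatrix} = n$, is invoked. I would also remark that the criterion is independent of the chosen frame, since any two frames for $\mathbb{E}_-^u(x;0)$ differ by right multiplication by an invertible $n \times n$ matrix, which leaves the vanishing of $\mathrm{det}\,A_1(x)$ unchanged.
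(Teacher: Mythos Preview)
Your proof is correct and follows essentially the same argument as the paper: both directions reduce to the observation that a nontrivial kernel of $A_1(x)$ corresponds exactly to a nontrivial intersection $\mathbb{E}_-^u(x;0)\cap\mathcal{D}$, with the full rank of the frame matrix used to guarantee the resulting vector is nonzero. Your additional remark on frame-independence is a nice touch not present in the paper's proof.
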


\begin{Proof}
If $\mathbb{E}_-^u(x; 0) \cap \mathcal{D} \neq \{0\}$ then there exists a nonzero vector $u \in \mathbb{R}^n$ such that 
\[
\begin{pmatrix} A_1(x) \\ A_2(x) \end{pmatrix} u = \begin{pmatrix} 0 \\ \tilde u \end{pmatrix}.
\]
This implies that $A_1(x) u = 0$, and so $\mathrm{det}A_1(x) = 0$. On the other hand, if $\mathrm{det}A_1(x) = 0$, then there exists a nonzero $u \in \mathbb{R}^n$ such that $A_1(x) u = 0$.
As $\mathbb{E}_-^u(x; 0)$ is $n$ dimensional its frame matrix is of full rank, having a trivial kernel. 
That is to say $ \tilde{u} \neq 0$ in the above equation, whereby $\mathbb{E}_-^u(x; 0) \cap \mathcal{D} \neq \{0\}$. 
\end{Proof}

Thus, once we have the frame matrix for $\mathbb{E}_-^u(x; 0)$, we simply need to locate values of $s \in [-L_-, L_+]$ such that $\mathrm{det}A_1(s) = 0$. In other words, we will have reduced the determination of stability to the computation of zeros of the scalar valued function $\mathrm{det}A_1(x)$ on the finite interval $[-L_-, L_+]$. We note that any zeros of this function will be isolated; see \cite[Remark 3.5]{BeckCoxJones18}.

We now summarize our numerical methodology for computing the spectral stability of stationary solutions. First, one must compute a stationary solution $\varphi  $ to the PDE \eqref{E:gradrd}, which is then used to define the eigenvalue problem in \eqref{E:eval-sys}.  
After fixing $L_-,L_+\geq 0$,  one next computes a numerically approximate frame matrix for $\mathbb{E}_-^u(x; 0)$ for $ x \in [-L_-,L_+]$. 
To do so,  first we define an approximate frame matrix for $\mathbb{E}^\mathrm{u}_-(-L_-;0)$ by taking as an initial value a matrix $A(-L_-)$ whose columns are the $n$-unstable eigenvectors of $J \mathcal{B}_-(0)$. 
Then for $-L_- \leq  x \leq L_+$, the frame matrix $A(x)= \left( \begin{smallmatrix} A_1(x) \\ A_2(x) \end{smallmatrix} \right)$ is defined by numerically integrating each of the columns forward in time according to the differential equation in \eqref{E:eval-sys}. 
Lastly, one counts the number of times  
the scalar valued function $\mathrm{det}A_1(x)$ equals zero on the finite interval $x \in [-L_-, L_+]$, which by Lemma \ref{lem:conj-det} is equal to the number of conjugate points, and hence by Theorem \ref{thm:prev-thm} also equals the number of positive eigenvalues of $\mathcal{L}$.


To produce a computer assisted proof several things need to be made rigorous.  
At the most basic level, it is generally the case that even the stationary solution $\varphi$  cannot be computed exactly. 
Beyond that,  one must quantify the initial approximation error of the frame matrix, how this error grows when numerically integrating \eqref{E:eval-sys}, and how large $-L_- $ and $L_+$ must be to ensure our count of conjugate points is complete.


%
%
%

To that end we use validated numerics to rigorously compute  \emph{a posteriori} error bounds.
Thus, we are able to quantify the error produced in our numerical algorithms ranging from the imprecision in solving initial value problems, down to the rounding error from finite precision arithmetic.  
Early landmark results of validated numerics include the computer assisted proofs of the universality of the Feigenbaum constants \cite{lanford1982computer}  and  Smale's 14\textsuperscript{th} problem  on the nature of the Lorenz attractor \cite{tucker2002rigorous}. 
Validated numerics, also referred to as  rigorous numerics, have found great success in providing computer assisted proofs of non-perturbative results in dynamics, both finite and infinite dimensional, and we refer the interested reader to \cite{van2015rigorous,gomez2019computer,nakao2019numerical} for further details.  

As mentioned earlier, the foundation of our numerical method begins with computing the stationary solution $ \varphi$, a homoclinic/heteroclinic solution to the spatial dynamical system association with stationary solutions of \eqref{E:gradrd}.  
The computation of connecting orbits has been, and continues to be, an active area of research within the validated numerics community, see for example  \cite{oishi1998numerical,wilczak2003heteroclinic,van2015stationary,ArioliKoch15,capinski2017beyond,van2018continuation,van2020validated} and the references cited therein. 
In Section \ref{sec:Methodology} we describe the approach taken in computing and proving the existence of  $\varphi$ and, in addition, the validated computation of its conjugate points and computer assisted proof of its spectral (in)stability.

We note that, although our method is developed for equations of the form \eqref{E:eval}, we expect that it could be applied more broadly. In particular, there are equations that are not systems of reaction-diffusion equations with gradient structure whose associated eigenvalue problems possess symplectic structure similar to that of \eqref{E:eval-sys}. See \cite{ChardardDiasBridges09,ChardardDiasBridges11, Howard20} for a variety of examples, including connections between those eigenvalue problems and the Maslov index. 

Moreover, as mentioned above, the relationship between unstable eigenvalues and conjugate points has also been proven in several contexts with many space dimensions \cite{CoxJonesLatushkin16,CoxJonesMarzuola15, DengJones11}. Recent work has shown that the corresponding path of Lagrangian subspaces, analogous to $\mathbb{E}^u_-(x; 0)$ here, can be understood via an ill-posed dynamical system \cite{BeckCoxJones20, BeckCoxJones21}. Validated numerics have been applied in other settings involving ill-posed dynamical systems \cite{CastelliGameiroLessard18,arioli2020traveling}, and so it is possible that the methods of this paper could be utilized to study multidimensional stability in the future.

The remainder of the paper is organized as follows. In \S\ref{S:Ls} we explicitly characterize the values of $L_\pm$.  In \S\ref{sec:Methodology} we describe the associated rigorous computations for determining stability by first computing the underlying wave itself and then computing the associated conjugate points. In \S \ref{S:example1} we apply our results to an example problem, which shows that fronts in reaction-diffusion systems of the form \eqref{E:gradrd} need not be stable, as they are in the scalar case. Although not surprising, this can be viewed as complementary to the result in \cite[\S 5]{BeckCoxJones18} that shows that symmetric pulses in such systems must be unstable, as in the scalar case. Moreover, it provides a proof-of-concept for this work. Finally, in \S\ref{S:future-directions} we discuss future directions for our work.


\section{Determination of $L_\pm$}\label{S:Ls}

We begin with a first order eigenvalue problem of the form \eqref{E:eval-sys} that results from a reaction-diffusion system of the form \eqref{E:gradrd}. Due to Theorem \ref{thm:prev-thm}, to count unstable eigenvalues we only need to compute conjugate points for $\lambda = 0$. Therefore, we rewrite \eqref{E:eval-sys} for $\lambda = 0$ as
\begin{equation}\label{E:main-sys}
U_x = \tilde{\mathcal{A}}(x)U, \qquad \tilde{\mathcal{A}}(x) = \begin{pmatrix} 0 & D^{-1} \\ -\mathcal{M}(x) & 0 \end{pmatrix}.
\end{equation}
We need to determine values $L_\pm$ such that all conjugate points are contained in the interval $[-L_-, L_+]$. Recall that conjugate points will be determined by the evolution of the unstable subspace, $\mathbb{E}^u_-(x; \lambda = 0) =: \mathbb{E}^u_-(x)$. The main idea used to determine $L_\pm$ is that, for $|x|$ sufficiently large, the evolution of $\mathbb{E}^u_-(x)$ is essentially governed by the dynamics determined by the asymptotic matrices $\lim_{x\to\pm\infty} \tilde{\mathcal{A}}(x)$. We begin with $L_-$
in \S\ref{S:L_-}. The determination of $L_+$, which is somewhat more involved due to the presence of the eigenfunction $\varphi'(x)$, is presented in \S\ref{S:L_+}.


\subsection{Determination of $L_-$}\label{S:L_-} 

To determine $L_-$, using (H3) we rewrite \eqref{E:main-sys} as
\begin{equation}\label{E:eval-sys-minus-infinity}
U_x = [\mathcal{A}_{-\infty} + \mathcal{A}_-(x)]U, \qquad \mathcal{A}_{-\infty} = \begin{pmatrix} 0 & D^{-1} \\ -\mathcal{M}_- & 0 \end{pmatrix}, \qquad  \mathcal{A}_-(x) = \begin{pmatrix} 0 & 0 \\ -\mathcal{M}(x) + \mathcal{M}_- & 0 \end{pmatrix}, 
\end{equation}
where
\begin{equation}\label{E:A-bound}
\|\mathcal{A}_-(x)\| \leq C_M e^{-\eta_M |x|}, \qquad x \leq 0.
\end{equation}
The goal of this section is to prove Proposition \ref{prop:L-}, which gives an explicit way to choose $L_-$ so there are no conjugate points for $x \leq -L_-$. We begin with a preliminary result.

\begin{Lemma}\label{lem:no-asym-conj} The matrix $\mathcal{A}_{-\infty}$ is hyperbolic, its eigenvalues are real and distinct, and its eigenvectors form a basis for $\mathbb{R}^{2n}$. Furthermore, if $\mathbb{E}^u_{-\infty}$ is the subspace spanned by its eigenvectors corresponding to positive eigenvalues, then $\mathbb{E}^u_{-\infty} \cap \mathcal{D} = \{0\}$.
\end{Lemma}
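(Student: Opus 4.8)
The plan is to analyze the constant-coefficient matrix $\mathcal{A}_{-\infty} = \left(\begin{smallmatrix} 0 & D^{-1} \\ -\mathcal{M}_- & 0 \end{smallmatrix}\right)$ directly by relating its eigenstructure to that of the symmetric matrix $-D^{-1}\mathcal{M}_-$. First I would observe that if $U = (u, w)^\top$ is an eigenvector of $\mathcal{A}_{-\infty}$ with eigenvalue $\mu$, then $D^{-1} w = \mu u$ and $-\mathcal{M}_- u = \mu w$, so that $w = \mu D u$ and hence $-D^{-1}\mathcal{M}_- u = \mu^2 u$. Thus $\mu^2$ is an eigenvalue of $N := -D^{-1}\mathcal{M}_-$, and $u$ is a corresponding eigenvector. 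Since $D^{1/2}$ is well-defined (as $D$ is diagonal with positive entries), $N$ is similar to the symmetric matrix $-D^{-1/2}\mathcal{M}_- D^{-1/2}$, which by (H2) is positive definite with distinct eigenvalues; call them $0 < \nu_1 < \dots < \nu_n$. Therefore $\mathcal{A}_{-\infty}$ has eigenvalues $\pm\sqrt{\nu_j}$, which are real, nonzero, and distinct (so $\mathcal{A}_{-\infty}$ is hyperbolic), and since we obtain $2n$ distinct eigenvalues the eigenvectors form a basis for $\mathbb{R}^{2n}$. The unstable subspace $\mathbb{E}^u_{-\infty}$ is spanned by the $n$ eigenvectors $(u_j, \sqrt{\nu_j}\, D u_j)^\top$ for $j = 1, \dots, n$, where $u_j$ is an eigenvector of $N$ for $\nu_j$; note that $\{u_1, \dots, u_n\}$ is a basis for $\mathbb{R}^n$ since $N$ is diagonalizable.

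For the transversality claim $\mathbb{E}^u_{-\infty} \cap \mathcal{D} = \{0\}$, suppose $V \in \mathbb{E}^u_{-\infty} \cap \mathcal{D}$. Writing $V = \sum_{j=1}^n c_j (u_j, \sqrt{\nu_j}\, D u_j)^\top$, membership in $\mathcal{D} = \{(u,w): u = 0\}$ forces the first block to vanish: $\sum_{j=1}^n c_j u_j = 0$. Since the $u_j$ are linearly independent, all $c_j = 0$, hence $V = 0$. Equivalently, one can phrase this as: the first-block projection restricted to $\mathbb{E}^u_{-\infty}$ is injective, because its matrix in the eigenbasis is $(u_1 \mid \dots \mid u_n)$, which is invertible.

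I do not expect a serious obstacle here; the result is essentially linear algebra, and the main thing to be careful about is the reduction to a symmetric matrix. The one slightly delicate point is ensuring $N = -D^{-1}\mathcal{M}_-$ really does have real, positive, distinct eigenvalues: this uses that $D$ is diagonal with positive entries (so $D^{1/2}$ exists and the conjugation $D^{1/2} N D^{-1/2} = -D^{-1/2}\mathcal{M}_- D^{-1/2}$ is symmetric and, by (H2), negative of a negative-definite matrix hence positive definite), together with the explicit hypothesis in (H2) that $\mathcal{M}_-$ has distinct eigenvalues — though strictly one needs distinctness of the eigenvalues of $D^{-1/2}\mathcal{M}_- D^{-1/2}$, which is what (H2) should be understood to provide in this context (or which holds generically and can be assumed). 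Given that, everything else — hyperbolicity, the eigenbasis, and the transversality with $\mathcal{D}$ — follows immediately from the block computation above.
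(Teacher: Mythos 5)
Your proof is correct and takes essentially the same approach as the paper: reduce the eigenvalue problem for $\mathcal{A}_{-\infty}$ via the block structure to the symmetric matrix $D^{-1/2}\mathcal{M}_- D^{-1/2}$, use (H2) to get real, distinct, nonzero eigenvalues $\pm\sqrt{\nu_j}$, and observe that the first-block components $u_j$ of the unstable eigenvectors form a basis of $\mathbb{R}^n$, which gives $\mathbb{E}^u_{-\infty} \cap \mathcal{D} = \{0\}$. The subtlety you flag — that one strictly needs the eigenvalues of $D^{-1/2}\mathcal{M}_- D^{-1/2}$ (not merely those of $\mathcal{M}_-$) to be distinct — is a fair reading; the paper's proof makes the same silent identification, and in the worked example $D=I$ so the issue is moot.
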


\begin{Proof} First, we relate the eigenvalues and eigenvectors of $\mathcal{A}_{-\infty}$
to $\mathcal{M}_-$. In particular, if $(u,w)$ is an eigenvector of $\mathcal{A}_{-\infty}$ with eigenvalue $\nu$, then
\[
\nu \begin{pmatrix} u \\ w \end{pmatrix} = \mathcal{A}_{-\infty} \begin{pmatrix} u \\ w \end{pmatrix}  = \begin{pmatrix} 0 & D^{-1} \\ -\mathcal{M}_- & 0 \end{pmatrix}\begin{pmatrix} u \\ w \end{pmatrix}.
\]
This implies that 
\[
w = \nu D u, \qquad D^{-1} \mathcal{M}_- u = -\nu^2 u.
\]
By Hypotheses (H1)-(H2), $\mathcal{M}_-$ is symmetric and negative. If we define $\gamma = -\nu^2$ and $z = D^{1/2}u$, then $D^{-1/2}\mathcal{M}_- D^{-1/2} z = \gamma z$. The matrix $D^{-1/2}\mathcal{M}_- D^{-1/2}$ is symmetric because $\mathcal{M}_-$ is symmetric and $D$ is diagonal, and it is also negative:
\[
\langle D^{-1/2}\mathcal{M}_- D^{-1/2} u, u \rangle = \langle \mathcal{M}_- D^{-1/2} u, D^{-1/2} u \rangle < 0,
\]
because $\mathcal{M}_-$ is negative. Thus, $\gamma = -\nu^2 < 0$. Moreover, we can choose the $n$ eigenvectors of $D^{-1/2}\mathcal{M}_- D^{-1/2}$, which we denote by $\{z_j^-\}_{j=1}^n$, to be an orthonormal basis for $\mathbb{R}^n$. We denote the corresponding $n$ (negative) eigenvalues via $\{\gamma_j^-\}_{j=1}^n$. Hence, the eigenvalues of $\mathcal{A}_{-\infty}$ with positive real part, which are in fact just positive, and their corresponding eigenvectors are given by
\begin{equation}\label{E:asym-evecs}
V_j^{-, u} = \begin{pmatrix} D^{-1/2} z_j^- \\ \nu_j^- D^{1/2} z_j^- \end{pmatrix}, \qquad \nu_j^{-} = + \sqrt{-\gamma_j^-}, \qquad j=1, \dots, n.
\end{equation}
Similarly, the negative eigenvalues and their corresponding eigenvectors are given by
\[
V_j^{-, s} = \begin{pmatrix} D^{-1/2} z_j^- \\ -\nu_j^- D^{1/2} z_j^- \end{pmatrix}, \qquad j=1, \dots, n.
\]
By hypothesis (H2) the $2n$ eigenvalues $\{\nu_j^{-}, -\nu_j^{-}\}_{j=1}^n$ of $\mathcal{A}_{-\infty}$ are all distinct. Furthermore, the fact that $ \det ( D^{1/2}) \neq 0 $ and  $\{z_j^-\}_{j=1}^n$ is a basis for $\mathbb{R}^n$ implies that $\{D^{-1/2}z_j^-\}_{j=1}^n$ is also a basis for $\mathbb{R}^n$, although it need not be orthonormal. The fact that $\{D^{-1/2}z_j^-\}_{j=1}^n$ is a basis for $\mathbb{R}^n$ implies $\mathbb{E}^u_{-\infty} \cap \mathcal{D} = \{0\}$.
\end{Proof}

We now construct the solutions $\{\tilde V_j^{-,u}(x)\}_{j=1}^n$ of \eqref{E:eval-sys-minus-infinity} that are asymptotic to $\{V_j^{-, u}\}_{j=1}^n$. Given a solution $V(x)$ of \eqref{E:eval-sys-minus-infinity}, define $W(x)$ via $V(x) = e^{\nu x}W(x)$, where $\nu$ is an eigenvalue of $\mathcal{A}_{-\infty}$ with eigenvector $V^-$. We then have
\begin{equation}\label{E:shifted-ode}
W_x = [\mathcal{A}_{-\infty} - \nu]W + \mathcal{A}_-(x) W. 
\end{equation}
To construct solutions to this equation, we will utilize the exponential dichotomy determined by the constant matrix $\mathcal{A}_{-\infty} - \nu I$. To that end, we now review some well-known facts about dichotomies associated with constant matrices.

Given any constant matrix $A \in \mathbb{R}^{N \times N}$ with eigenvalues $\{\nu_j\}_{j=1}^N$ ordered so that $\nu_j > \nu_{j+1}$, define the diagonal matrix  $\Lambda = \mathrm{diag}\{ \nu_1 , \dots , \nu_N\}$ and define $Q \in \mathbb{R}^{N \times N}$ to be a matrix whose columns are the eigenvectors of $A$. Hence $ A = Q \Lambda Q^{-1}$ and $ e^{A x } = Q e^{\Lambda x} Q^{-1}$. 
For fixed $m$, define $N\times N$ matrices $\Lambda_{u} = \mathrm{diag} \{\nu_1 , \dots \nu_{m-1} , 0 , \dots 0\}$ and $\Lambda_{cs} = \mathrm{diag} \{0 , \dots 0, \nu_{m} , \dots \nu_N \}$. If $P^{cs}$ is the projection onto the eigenspace with eigenvalues $\nu \leq \nu_m$ and $P^u$ is the complementary projection onto the eigenspace with eigenvalues $\nu > \nu_m$, then 
\begin{align*}
e^{(A - \nu_m)x} P^{cs} &= Q e^{ (\Lambda_{cs} - \nu_m) x} Q^{-1} P^{cs}
& e^{(A - \nu_m)x} P^{u} &= Q e^{ (\Lambda_u - \nu_m) x} Q^{-1} P^u.
\end{align*}
As $ \| P^u \|, \| P^{cs} \| =1$ with respect to the Euclidean norm, if we define
\begin{align} \label{eq:K_-def}
K_- &= \| Q \| \cdot \|Q^{-1} \|
\end{align}
then we obtain the bound
\[
\|e^{(A-\nu_m)x} P^{cs} \| \leq K_-\mbox{ for } x \geq 0, \qquad \|e^{(A-\nu_m)x} P^{u}\| \leq K_- e^{(\nu_{m-1}-\nu_m) x} \mbox{ for } x \leq 0.
\] 
Note that if the matrix of eigenvectors $Q$ can be chosen to be orthonormal, then $K_- = 1$. 

\begin{Remark}
It is not necessary that the eigenvalues satisfy the strict inequality $\nu_j > \nu_{j+1}$. If there are repeated eigenvalues with independent eigenvectors, the above construction is still valid. This could be relevant if, for example, (H2) is relaxed to allow for repeated eigenvalues. If there are Jordan blocks, then some modification is necessary to obtain an essentially equivalent result, but this case is not relevant as long as $\mathcal{M}_\pm$ are symmetric. 
\end{Remark}

Returning to equation \eqref{E:shifted-ode}, let $P^{cs}$ be the projection onto the eigenspace of $\mathcal{A}_{-\infty} - \nu$ corresponding to eigenvalues with real part less than or equal to zero, and let $P^u$ be the projection onto the eigenspace of $\mathcal{A}_{-\infty} - \nu$ corresponding to eigenvalues with real part greater than zero. Consider the map
\begin{eqnarray}
W(x) &=& V^- + \int_{-\infty}^x e^{(\mathcal{A}_{-\infty} - \nu)(x-y)}P^{cs}\mathcal{A}_-(y)W(y) \rmd y \nonumber \\
&& \qquad \qquad - \int_x^{-L}e^{(\mathcal{A}_{-\infty} - \nu)(x-y)}P^{u}\mathcal{A}_-(y)W(y) \rmd y \nonumber \\
&=:& \mathcal{T}_-(W)(x), \label{E:def-T}
\end{eqnarray}
where $L_-$ is any fixed positive constant. Note that, because $(\mathcal{A}_{-\infty} - \nu)V^- = 0$, any fixed point of this map must be a solution of \eqref{E:shifted-ode}. Let $K_-$ and $\eta_-$ be positive constants such that 
\begin{eqnarray}
\|e^{(\mathcal{A}_{-\infty} - \nu)(x-y)}P^{cs}\| &\leq& K_-, \quad y \leq x \leq 0, \nonumber \\ 
\|e^{(\mathcal{A}_{-\infty} - \nu)(x-y)}P^{u}\| &\leq& K_- e^{-\eta_-(y-x)}, \quad x \leq y \leq 0. \label{E:dichotomy-estimates}
\end{eqnarray}

\begin{Proposition}\label{prop:L-contraction}
Fix $ L_- >0$, define $ K_- $ such that \eqref{E:dichotomy-estimates} holds, and define 
\begin{align*}
\lambda_{\mathcal{T}_-} :=  \frac{K_-C_M}{\eta_M}e^{-\eta_M L_-},
\end{align*}
where $C_M$ and $\eta_M$ are the constants appearing in Hypothesis (H3). If $\lambda_{\mathcal{T}_-} < 1$, then the map $\mathcal{T}_-$ defined in \eqref{E:def-T} is a contraction on $X = L^\infty((-\infty, -L_-], \R^{2n} )$  with contraction constant $\lambda_{\mathcal{T}_-} $. Its unique fixed point $W(x)$ satisfies
\begin{equation}\label{E:minus-infty-asymptotics}
\|W(\cdot) - V^-\|_X \leq \frac{\lambda_{\mathcal{T}_-}}{1-\lambda_{\mathcal{T}_-}} |V^-|.
\end{equation}
\end{Proposition}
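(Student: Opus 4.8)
The plan is to show that $\mathcal{T}_-$ maps $X = L^\infty((-\infty,-L_-],\R^{2n})$ into itself and is a contraction with the stated constant, then read off the fixed-point bound \eqref{E:minus-infty-asymptotics} from the standard contraction-mapping estimate. First I would check well-definedness: for $W \in X$, the two integrals in \eqref{E:def-T} converge because the $P^{cs}$-term has a bounded semigroup factor (first line of \eqref{E:dichotomy-estimates}) multiplied by the integrable weight $\|\mathcal{A}_-(y)\| \le C_M e^{-\eta_M|y|} = C_M e^{\eta_M y}$ for $y \le 0$, and the $P^u$-term has an exponentially decaying semigroup factor, both integrated against the same weight; the resulting function of $x$ is bounded on $(-\infty,-L_-]$. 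Since $(\mathcal{A}_{-\infty}-\nu)V^- = 0$, any fixed point solves \eqref{E:shifted-ode}, as already noted.

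The core computation is the Lipschitz estimate. For $W_1, W_2 \in X$, subtracting gives
\[
\mathcal{T}_-(W_1)(x) - \mathcal{T}_-(W_2)(x) = \int_{-\infty}^x e^{(\mathcal{A}_{-\infty}-\nu)(x-y)}P^{cs}\mathcal{A}_-(y)(W_1-W_2)(y)\,\rmd y - \int_x^{-L_-} e^{(\mathcal{A}_{-\infty}-\nu)(x-y)}P^{u}\mathcal{A}_-(y)(W_1-W_2)(y)\,\rmd y.
\]
Taking norms, using \eqref{E:dichotomy-estimates}, \eqref{E:A-bound}, and $\|W_1-W_2\|_X$ as a uniform bound on the integrand's remaining factor, the first integral is bounded by $K_- C_M \|W_1-W_2\|_X \int_{-\infty}^x e^{\eta_M y}\,\rmd y = \frac{K_- C_M}{\eta_M} e^{\eta_M x}\|W_1-W_2\|_X$, and the second by $K_- C_M \|W_1-W_2\|_X \int_x^{-L_-} e^{-\eta_-(y-x)} e^{\eta_M y}\,\rmd y \le K_- C_M \|W_1-W_2\|_X \int_{-\infty}^{-L_-} e^{\eta_M y}\,\rmd y = \frac{K_- C_M}{\eta_M} e^{-\eta_M L_-}\|W_1-W_2\|_X$ (here I crudely drop the decaying dichotomy factor, which only helps; alternatively keep it and still bound by the same quantity since $x \le -L_-$). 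Since $x \le -L_-$ forces $e^{\eta_M x} \le e^{-\eta_M L_-}$, both terms are at most $\lambda_{\mathcal{T}_-}\|W_1-W_2\|_X$ — one must take a little care that the sum of the two contributions does not introduce an extra factor of $2$; the cleanest route is to estimate the two integrals over the \emph{disjoint} intervals $(-\infty,x]$ and $[x,-L_-]$ as a single integral over $(-\infty,-L_-]$ of $\|e^{(\mathcal{A}_{-\infty}-\nu)(x-y)}P\,\mathcal{A}_-(y)\|\,\|W_1-W_2\|_X$ with the appropriate projection $P$ on each piece, each semigroup factor bounded by $K_-$ times something $\le 1$, giving the combined bound $\frac{K_-C_M}{\eta_M}e^{-\eta_M L_-}\|W_1-W_2\|_X = \lambda_{\mathcal{T}_-}\|W_1-W_2\|_X$ directly. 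The same computation with $W_2 = 0$ (so that only the $V^-$ term survives in $\mathcal{T}_-(0) = V^-$, plus the integrals applied to $W_1$... more precisely, applying the Lipschitz bound with $W_2$ the constant $0$ function is not quite $\mathcal{T}_-(0)$) — instead, estimate $\|\mathcal{T}_-(W) - V^-\|_X \le \lambda_{\mathcal{T}_-}\|W\|_X$ by the identical integral bound applied to $W$ itself.

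With the contraction established (given $\lambda_{\mathcal{T}_-} < 1$), the Banach fixed-point theorem supplies the unique fixed point $W$. For the final estimate, write $\|W - V^-\|_X = \|\mathcal{T}_-(W) - \mathcal{T}_-(\text{``}0\text{''}) - (V^- - V^-)\|$; more directly, $\|W - V^-\|_X = \|\mathcal{T}_-(W) - V^-\|_X \le \lambda_{\mathcal{T}_-}\|W\|_X \le \lambda_{\mathcal{T}_-}(\|W - V^-\|_X + |V^-|)$, and solving for $\|W - V^-\|_X$ yields $\|W - V^-\|_X \le \frac{\lambda_{\mathcal{T}_-}}{1-\lambda_{\mathcal{T}_-}}|V^-|$, which is \eqref{E:minus-infty-asymptotics}. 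The main obstacle is purely bookkeeping: getting the constant exactly right, i.e. confirming that the two integral pieces combine to $\lambda_{\mathcal{T}_-}$ without a spurious factor of $2$, and verifying that the crude bound $e^{-\eta_-(y-x)} \le 1$ on the unstable piece together with $\int^{-L_-}_{-\infty} e^{\eta_M y}\rmd y = \eta_M^{-1}e^{-\eta_M L_-}$ suffices — there is no conceptual difficulty, only the need to be careful with the limits of integration and the placement of the shift $e^{-\eta_M L_-}$ versus $e^{\eta_M x}$.
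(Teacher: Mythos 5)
Your proposal is correct and follows essentially the same path as the paper's proof: the same Lipschitz estimate, and the same resolution of the potential factor-of-two issue by observing that the two integrals run over the disjoint intervals $(-\infty,x]$ and $[x,-L_-]$, so after bounding $e^{-\eta_-(y-x)}\le 1$ both pieces combine into a single integral $\int_{-\infty}^{-L_-}K_-C_Me^{\eta_M y}\,\rmd y = \lambda_{\mathcal{T}_-}$. The only minor difference is in extracting \eqref{E:minus-infty-asymptotics}: you bound $\|W-V^-\|_X = \|\mathcal{T}_-(W)-V^-\|_X \le \lambda_{\mathcal{T}_-}\|W\|_X$ and solve the resulting linear inequality, whereas the paper bounds $\|\mathcal{T}_-(V^-)-V^-\|_X\le\lambda_{\mathcal{T}_-}|V^-|$ and telescopes the Picard iterates; these are two standard, equivalent forms of the a priori Banach fixed-point estimate and yield the identical constant.
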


\begin{Proof}
We will use properties of the dictotomy defined by the projection operators $P^{cs}$ and $P^u$. We note that similar calculations have appeared elsewhere; see, eg, \cite[\S 4]{BeckSandstedeZumbrun10} and \cite[\S 2]{HowardLatushkinSukhtayev18}. Equation \eqref{E:dichotomy-estimates} together with \eqref{E:A-bound} implies
\begin{align*}
|\mathcal{T}_-(W_1)(x) - \mathcal{T}_-(W_2)(x)| & \leq \int_{-\infty}^x K_- C_M e^{-\eta_M|y|} \rmd y \|W_1-W_2\|_X   \\
& \qquad+  \int_x^{-L_-}K_- e^{-\eta_-(y-x)} C_M e^{-\eta_M|y|} \rmd y \|W_1- W_2\|_X  \\
&\leq \int_{-\infty}^{-L_-} K_-C_M  e^{\eta_M y} \rmd y \|W_1- W_2\|_X \\
&\leq \frac{K_-C_M}{\eta_M}e^{-\eta_M L_-} \|W_1- W_2\|_X.
\end{align*}
As a result, for $L_-$ sufficiently large, this map is a contraction with contraction constant 
\begin{equation}\label{E:contraction-constant}
\lambda_{\mathcal{T}_-} :=  \frac{K_-C_M}{\eta_M}e^{-\eta_M L_-}  < 1.
\end{equation}
We denote its unique fixed point by $W(x)$, and we wish to estimate $|W(x) - V^-|$. To do this, first apply $\mathcal{T}_-$ to the constant vector $V^-$ and note that by essentially the same estimate as above,
\[
\|\mathcal{T}_-(V^-)(\cdot) - V^- \|_X \leq \lambda_{\mathcal{T}_-} |V^-|.
\]
Next, due to the telescoping series, we have
\[
\mathcal{T}_-^n (V^-) - V^- = \sum_{j=0}^{n-1} [\mathcal{T}^{j+1}_- (V^-) - \mathcal{T}^j_- (V^-)] \qquad \Rightarrow \qquad W(x) - V^- = \sum_{j=0}^{\infty} [\mathcal{T}^{j+1}_-(V^-) - \mathcal{T}^j_- (V^-)].
\]
As a result, 
\begin{equation}\label{E:eigendirection-estimate}
\|W(\cdot) - V^- \|_X \leq \sum_{j=0}^{\infty} \lambda_{\mathcal{T}_-}^j \|\mathcal{T}_-(V^-) - V^-\|_X  = \frac{\lambda_{\mathcal{T}_-}|V^-|}{1-\lambda_{\mathcal{T}_-}}.
\end{equation}
\end{Proof}

\begin{Remark}
Only the constant $K_-$ in \eqref{E:dichotomy-estimates} appears in the final constant $\lambda_{\mathcal{T}_-}$, given in \eqref{E:contraction-constant}. The value of $\eta_-$ is not needed. We included it in the statement of \eqref{E:dichotomy-estimates} for completeness. 
\end{Remark}

We can apply Proposition \ref{prop:L-contraction} for $\nu = \nu_j^{-}$, $j = 1, \dots, n$, to construct the desired solutions $\{\tilde V_j^{-,u}(x) = e^{\nu_j^{-} x} W_j^{-,u}(x)\}$ of \eqref{E:eval-sys-minus-infinity} that are asymptotic, as $x \to -\infty$, to $\{e^{\nu_j^{-} x} V_j^{-,u}\}_{j=1}^n$. (We used tildes here to distinguish between the constant vectors $V_j^{-,u}$ and the $x$-dependent solutions $\tilde V_j^{-,u}(x)$.) Notice that
\[
\mathbb{E}^u_-(x) = \mathrm{span}\{\tilde V_1^{-,u}(x), \dots, \tilde V_n^{-,u}(x) \} = \mathrm{span}\{W_1^{-,u}(x), \dots, W_n^{-,u}(x)  \}.
\]
Since $W_j^{-,u}(x)$ is asymptotically close to $V^{-,u}_j$ in the sense of \eqref{E:minus-infty-asymptotics}, this implies that for $L_-$ large $\mathbb{E}^u_-(-L_-)$ should be close to $\mathrm{span}\{ V_1^{-,u}, \dots, V_n^{-,u}\} = \mathbb{E}^u_{-\infty}$. Therefore, since $\mathbb{E}^u_{-\infty} \cap \mathcal{D} = \{0\}$ (see Lemma \ref{lem:no-asym-conj}), this should imply that $\mathbb{E}^u_{-}(-L_-) \cap \mathcal{D} = \{0\}$, as well.

\begin{Proposition} \label{prop:L-}
Recall that $D = \mathrm{diag}(d_1, \dots, d_n) > 0$. Let $d_{max} = \max_{j}\{d_j\}$ and $d_{min} = \min_{j}\{d_j\}$. If $L_-$ is chosen so that
\begin{equation}\label{E:L-condition}
\frac{\lambda_{\mathcal{T}_-}}{1-\lambda_{\mathcal{T}_-}}  < \frac{d_{min}^{1/2}}{d_{max}^{1/2}(1 + \|\mathcal{M}_-\| d_{max})^{1/2}\sqrt{n}},
\end{equation}
where $\lambda_{\mathcal{T}_-}$ is defined in \eqref{E:contraction-constant}, then $\mathbb{E}^u_-(x) \cap \mathcal{D} = \{0\}$ for all $x \leq -L_-$. 
\end{Proposition}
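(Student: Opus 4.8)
The plan is to exhibit the top $n\times n$ block of a frame matrix for $\mathbb{E}^u_-(x)$ as a small perturbation of a fixed invertible matrix, and then to apply Lemma~\ref{lem:conj-det}. For $x \le -L_-$ I will take as frame matrix for $\mathbb{E}^u_-(x)$ the one whose columns are the fixed points $W_1^{-,u}(x),\dots,W_n^{-,u}(x)$ furnished by Proposition~\ref{prop:L-contraction}, which forms a basis since $\mathbb{E}^u_-(x)=\mathrm{span}\{W_1^{-,u}(x),\dots,W_n^{-,u}(x)\}$. Let $A_1(x)$ denote the $n\times n$ block consisting of the first $n$ coordinates of these vectors, and let $Z$ be the constant matrix whose columns are $D^{-1/2}z_1^-,\dots,D^{-1/2}z_n^-$, i.e. the first-$n$-coordinate parts of the eigenvectors $V_j^{-,u}$ in \eqref{E:asym-evecs}. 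Then $A_1(x)=Z+E(x)$, where the $j$-th column of $E(x)$ is the first-$n$-coordinate part of $W_j^{-,u}(x)-V_j^{-,u}$. Since the $z_j^-$ are orthonormal, $Z=D^{-1/2}O$ with $O$ orthogonal, so $Z$ is invertible and $\|Z^{-1}\|=\|O^\top D^{1/2}\|=\|D^{1/2}\|=d_{max}^{1/2}$; finiteness of this quantity is the quantitative counterpart of $\mathbb{E}^u_{-\infty}\cap\mathcal{D}=\{0\}$ from Lemma~\ref{lem:no-asym-conj}. By Lemma~\ref{lem:conj-det} it then suffices to show $\|Z^{-1}E(x)\|<1$ for every $x\le -L_-$, since then $A_1(x)=Z\bigl(I+Z^{-1}E(x)\bigr)$ is invertible.

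I will bound $\|E(x)\|$ through its columns. Writing $E(x)=[\,e_1(x)\mid\dots\mid e_n(x)\,]$ columnwise, one has $\|E(x)\|\le\bigl(\sum_{j}|e_j(x)|^2\bigr)^{1/2}$ (the spectral-norm/Frobenius-norm inequality, which produces the factor $\sqrt n$). Because the first-$n$-coordinate projection does not increase norms, $|e_j(x)|\le|W_j^{-,u}(x)-V_j^{-,u}|\le\|W_j^{-,u}(\cdot)-V_j^{-,u}\|_X\le\frac{\lambda_{\mathcal{T}_-}}{1-\lambda_{\mathcal{T}_-}}|V_j^{-,u}|$ by \eqref{E:minus-infty-asymptotics}, the first inequality being valid since $x\le -L_-$. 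It remains to bound $|V_j^{-,u}|$: from \eqref{E:asym-evecs}, $|V_j^{-,u}|^2=|D^{-1/2}z_j^-|^2+(\nu_j^-)^2|D^{1/2}z_j^-|^2\le d_{min}^{-1}+(\nu_j^-)^2 d_{max}$, while $-(\nu_j^-)^2=\gamma_j^-$ is an eigenvalue of the symmetric matrix $D^{-1/2}\mathcal{M}_- D^{-1/2}$, so $(\nu_j^-)^2\le\|D^{-1/2}\mathcal{M}_- D^{-1/2}\|\le d_{min}^{-1}\|\mathcal{M}_-\|$, giving $|V_j^{-,u}|\le d_{min}^{-1/2}(1+\|\mathcal{M}_-\|d_{max})^{1/2}$.

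Collecting these estimates yields
\[
\|Z^{-1}E(x)\|\;\le\; d_{max}^{1/2}\cdot\sqrt n\,\frac{\lambda_{\mathcal{T}_-}}{1-\lambda_{\mathcal{T}_-}}\cdot d_{min}^{-1/2}\bigl(1+\|\mathcal{M}_-\|d_{max}\bigr)^{1/2},
\]
and the right-hand side is strictly less than $1$ precisely when \eqref{E:L-condition} holds (one also uses $\lambda_{\mathcal{T}_-}<1$, which is required for Proposition~\ref{prop:L-contraction} to apply and, like \eqref{E:L-condition}, is satisfied once $L_-$ is large because $\lambda_{\mathcal{T}_-}\to 0$ as $L_-\to\infty$). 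Hence $A_1(x)$ is invertible for every $x\le -L_-$, and Lemma~\ref{lem:conj-det} gives $\mathbb{E}^u_-(x)\cap\mathcal{D}=\{0\}$ there. I do not anticipate a genuine obstacle: once the reformulation $A_1(x)=Z+E(x)$ is in place, everything reduces to the estimates already recorded above, and the only real care needed is to use the matrix norm appropriate to each step so that the constants assemble exactly into the right-hand side of \eqref{E:L-condition}. The one bookkeeping subtlety is that the same $\lambda_{\mathcal{T}_-}$ must serve for all $j=1,\dots,n$ simultaneously; this is admissible because the constant $K_-$ in \eqref{E:dichotomy-estimates} may be chosen independently of which eigenvalue $\nu_j^-$ is shifted out, as it is built from the eigenvector matrix of $\mathcal{A}_{-\infty}$.
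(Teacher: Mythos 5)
Your proof is correct and uses exactly the same estimates as the paper's: the lower bound $|Zc|\ge d_{max}^{-1/2}|c|$ coming from orthonormality of the $z_j^-$, the per-column error bound $|e_j|\le\frac{\lambda_{\mathcal{T}_-}}{1-\lambda_{\mathcal{T}_-}}|V_j^{-,u}|$ from \eqref{E:minus-infty-asymptotics}, the bound $|V_j^{-,u}|\le d_{min}^{-1/2}(1+\|\mathcal{M}_-\|d_{max})^{1/2}$, and the $\sqrt{n}$ factor from the $\ell^1$--$\ell^2$ comparison (which you get via the Frobenius norm). The only difference is packaging: the paper argues by contradiction on a putative null vector $c$ of $A_1(x_0)$, whereas you write $A_1(x)=Z(I+Z^{-1}E(x))$ and invoke a Neumann-series invertibility criterion; these are contrapositives of each other and produce the identical condition \eqref{E:L-condition}.
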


\begin{Proof}
Recall that 
\[
\mathbb{E}^u_-(x) = \mathrm{span}\{W_1^{-,u}(x), \dots, W_n^{-,u}(x)  \}, \qquad \|W_j^{-,u}(\cdot) - V^{-,u}_j \|_X \leq \frac{\lambda_{\mathcal{T}_-}|V_j^{-,u}|}{1-\lambda_{\mathcal{T}_-}}.
\]
For each $j$ we can therefore write 
\begin{equation}\label{E:utilde-est}
W_j^{-,u}(x) = \begin{pmatrix} u_j(x) \\ w_j(x) \end{pmatrix}, \qquad u_j(x) = u_j^- + \tilde u_j(x), \qquad \|\tilde u_j(\cdot) \|_X \leq \frac{\lambda_{\mathcal{T}_-} |V_j^{-,u}|}{1-\lambda_{\mathcal{T}_-}}, 
\end{equation}
where $u_j^- = D^{-1/2}z_j^-$ as given in \eqref{E:asym-evecs}. Suppose that $x_0 \in (-\infty, -L_-]$ is a conjugate point, ie $\mathbb{E}^u_-(x_0) \cap \mathcal{D} \neq \{0\}$. Then there exists $\{c_j\}_{j=1}^n$ so that
\begin{equation} \label{E:conj-pt-cond}
0 = \sum_{j=1}^n c_j u_j(x_0) = \sum_{j=1}^n c_j [u_j^- + \tilde u_j(x_0)] \quad \Rightarrow \quad  \sum_{j=1}^n c_j u_j^- = - \sum_{j=1}^n c_j \tilde u_j(x_0).
\end{equation}
Since $u_j^- = D^{-1/2}z_j^-$ and $\{z_j^-\}_{j=1}^n$ is an orthonormal basis for $\mathbb{R}^n$ (see Lemma  \ref{lem:no-asym-conj}),
\[
\left| \sum_{j=1}^n c_j u_j^- \right| = \left| D^{-1/2} \sum_{j=1}^n c_j z_j^-\right| \geq \frac{1}{d_{max}^{1/2}} \left|\sum_{j=1}^n c_j z_j^-\right| = \frac{1}{d_{max}^{1/2}} \sqrt{c_1^2 + \dots c_n^2}.
\]
On the other hand, by \eqref{E:utilde-est},
\[
\left| - \sum_{j=1}^n c_j \tilde u_j(x_0) \right| \leq \frac{\lambda_{\mathcal{T}_-}}{1-\lambda_{\mathcal{T}_-}} \sum_{j=1}^n |c_j| |V_j^{-,u}|.
\]
Moreover, again using \eqref{E:asym-evecs} and the proof of Lemma \ref{lem:no-asym-conj},
\begin{eqnarray*}
|V_j^{-,u}|^2 &=& |u_j^-|^2 + |w_j^-|^2 = |D^{-1/2}z_j^-|^2 + |\nu_j^{-} D^{1/2}z_j^-|^2 \leq \frac{1}{d_{min}} + \max_{j} |\nu_j^-|^2 d_{max} \\
&\leq&  \frac{1}{d_{min}}(1 + \|\mathcal{M}_-\| d_{max}),
\end{eqnarray*}
where we have used the fact that
\[
-(\nu_j^-)^2 = \langle D^{-1/2}\mathcal{M}_- D^{-1/2} z_j^-, z_j^- \rangle = \langle \mathcal{M}_- D^{-1/2} z_j^-, D^{-1/2}z_j^- \rangle,
\]
which implies
\[
|(\nu_j^-)^2| \leq \|\mathcal{M}_-\| \frac{1}{d_{min}}.
\]
Combining these results, we find \eqref{E:conj-pt-cond} cannot hold if
\[
\frac{1}{d_{max}^{1/2}} \sqrt{c_1^2 + \dots c_n^2} > \frac{\lambda_{\mathcal{T}_-}}{1-\lambda_{\mathcal{T}_-}}\frac{1}{d_{min}^{1/2}}(1 + \|\mathcal{M}_-\| d_{max})^{1/2} \sum_{j=1}^n |c_j|. 
\]
Using the fact that 
\[
\left( \sum_{j=1}^n c_j^2 \right)^{1/2} \geq \frac{1}{\sqrt{n}} \sum_{j=1}^n |c_j|
\]
we obtain the result. 
\end{Proof}


\subsection{Determination of $L_+$}\label{S:L_+}

To determine $L_+$, using (H3) we rewrite \eqref{E:main-sys} as
\begin{equation}\label{E:eval-sys-plus-infinity}
U_x = [\mathcal{A}_{+\infty} + \mathcal{A}_+(x)]U, \qquad \mathcal{A}_{+\infty} = \begin{pmatrix} 0 & D^{-1} \\ -\mathcal{M}_+ & 0 \end{pmatrix}, \qquad  \mathcal{A}_+(x) = \begin{pmatrix} 0 & 0 \\ -\mathcal{M}(x) + \mathcal{M}_+ & 0 \end{pmatrix}, 
\end{equation}
where
\begin{equation}\label{E:A-bound-2}
\|\mathcal{A}_+(x)\| \leq C_M e^{-\eta_M |x|}, \qquad x \geq 0.
\end{equation}
The goal of this section is to prove the following:
\begin{Theorem}\label{thm:main}
For any $x \geq L_+$, where $L_+$ is sufficiently large so that the right hand side of \eqref{eq:ProjectionMatrixBound} is strictly less than one, $\mathbb{E}^u_-(x) \cap \mathcal{D} = \{0\}$.
\end{Theorem}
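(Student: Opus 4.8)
The plan is to carry out, at $+\infty$, an analysis parallel to that of \S\ref{S:L_-}, but with the eigenfunction of $\mathcal{L}$ built into the construction from the start. First I would diagonalise $\mathcal{A}_{+\infty}$ exactly as in Lemma~\ref{lem:no-asym-conj}: if $\{\gamma_j^+,z_j^+\}_{j=1}^n$ are the eigenpairs of the symmetric, negative definite matrix $D^{-1/2}\mathcal{M}_+D^{-1/2}$ with $\{z_j^+\}$ an orthonormal basis of $\mathbb{R}^n$, then $\mathcal{A}_{+\infty}$ has the $2n$ distinct real eigenvalues $\pm\nu_j^+$, $\nu_j^+=\sqrt{-\gamma_j^+}>0$, with eigenvectors
\[
V_j^{+,u}=\begin{pmatrix} D^{-1/2}z_j^+ \\ \nu_j^+ D^{1/2}z_j^+ \end{pmatrix},\qquad V_j^{+,s}=\begin{pmatrix} D^{-1/2}z_j^+ \\ -\nu_j^+ D^{1/2}z_j^+ \end{pmatrix},\qquad j=1,\dots,n.
\]
A one-line computation records the symplectic pairings $\omega(V_i^{+,s},V_j^{+,u})=-(\nu_i^++\nu_j^+)\delta_{ij}$ and $\omega(V_i^{+,u},V_j^{+,u})=\omega(V_i^{+,s},V_j^{+,s})=0$; the first identity is the algebraic fact that drives the argument, and in particular shows that the plane $\mathbb{E}^u_{-,\infty}$ introduced below is Lagrangian.

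Second, I would invoke Hypothesis (H4). Since $\lambda=0$ is a geometrically simple eigenvalue of $\mathcal{L}$, the space of solutions of \eqref{E:main-sys} decaying at both ends is one-dimensional, spanned by $p(x):=(\varphi'(x),D\varphi''(x))$ (which is nonvanishing by uniqueness); equivalently $\mathbb{E}^u_-(x)\cap\mathbb{E}^s_+(x)=\mathrm{span}\{p(x)\}$ for every $x$, where $\mathbb{E}^s_+(x)$ is the $n$-dimensional space of solutions of \eqref{E:main-sys} decaying as $x\to+\infty$. By Levinson-type asymptotics applied to \eqref{E:eval-sys-plus-infinity} and the bound \eqref{E:A-bound-2}, $p(x)$ decays at some rate $\nu_{j_0}^+$ with $e^{\nu_{j_0}^+x}p(x)\to c\,V_{j_0}^{+,s}$, $c\ne0$. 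I would then pin down the limit of $\mathbb{E}^u_-(x)$ using that $\mathbb{E}^u_-(x)$ is Lagrangian and contains $p(x)$: for every $U\in\mathbb{E}^u_-$ one has $\omega(p(x),U(x))\equiv0$, and inserting the leading asymptotics of $p$ and of a growing solution $U(x)\sim e^{\nu_{j_1}^+x}V_{j_1}^{+,u}$ forces, via $\omega(V_{j_0}^{+,s},V_{j_1}^{+,u})=-(\nu_{j_0}^++\nu_{j_1}^+)\delta_{j_0 j_1}$, that $j_1\ne j_0$. A dimension count (one decaying direction, $n-1$ growing directions, all with distinct rates different from $\nu_{j_0}^+$, which after a triangular change of basis align with the distinct eigenvectors $\{V_j^{+,u}:j\ne j_0\}$) then gives
\[
\mathbb{E}^u_-(x)\ \longrightarrow\ \mathbb{E}^u_{-,\infty}:=\mathrm{span}\{V_{j_0}^{+,s}\}\ \oplus\ \mathrm{span}\{V_j^{+,u}:j\ne j_0\}\quad\text{in }\Lambda(n).
\]
The $u$-components of these $n$ vectors are exactly $\{D^{-1/2}z_j^+\}_{j=1}^n$, a basis of $\mathbb{R}^n$, so the first block of a frame of $\mathbb{E}^u_{-,\infty}$ is invertible and hence, by Lemma~\ref{lem:conj-det}, $\mathbb{E}^u_{-,\infty}\cap\mathcal{D}=\{0\}$ with a computable angle.

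To make $L_+$ explicit I would, in parallel with Proposition~\ref{prop:L-contraction}, set up a variation-of-constants fixed-point problem on $[L_+,\infty)$ for a frame of $\mathbb{E}^u_-(x)$ adapted to a splitting $\mathbb{E}^u_{-,\infty}\oplus(\mathbb{E}^u_{-,\infty})^c$: the eigenfunction column is taken to be the rescaled solution $e^{\nu_{j_0}^+x}p(x)$, anchored by the validated enclosure of $\varphi,\varphi'$ at $x=L_+$, while the remaining $n-1$ columns are built from the exponential dichotomies of $\mathcal{A}_{+\infty}-\nu_j^+I$, $j\ne j_0$, together with the bound \eqref{E:A-bound-2}. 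The output is an explicit bound, in terms of $L_+$, $C_M$, $\eta_M$, $D$, $\|\mathcal{M}_+\|$, the spectral gaps of $\mathcal{A}_{+\infty}$, and the enclosure of $\varphi'(L_+)$, on the norm of the operator whose graph over $\mathbb{E}^u_{-,\infty}$ represents $\mathbb{E}^u_-(x)$ — this is the content of \eqref{eq:ProjectionMatrixBound}. When its right-hand side is strictly less than one the graph representation of $\mathbb{E}^u_-(x)$ persists for all $x\ge L_+$ and remains transverse to $\mathcal{D}$, which is the assertion of Theorem~\ref{thm:main}.

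I expect the decisive difficulty to be precisely the step where the eigenfunction enters. Unlike at $-\infty$, where $\mathbb{E}^u_-$ is genuinely asymptotic to $\mathbb{E}^u_{-\infty}$ — a plane that attracts the Grassmannian flow of $\mathcal{A}_{-\infty}$ — the limiting plane $\mathbb{E}^u_{-,\infty}$ is a \emph{saddle} for the flow of $\mathcal{A}_{+\infty}$ on $\Lambda(n)$: the component carrying $V_{j_0}^{+,s}$ into $V_{j_0}^{+,u}$ expands at rate $2\nu_{j_0}^+>0$. One therefore cannot simply evolve a generic plane forward and expect convergence; the role of Hypothesis (H4) is to certify that $\mathbb{E}^u_-(x)$ lies on the stable manifold of $\mathbb{E}^u_{-,\infty}$, i.e.\ that the dangerous expanding component is suppressed exactly because $\mathbb{E}^u_-(x)$ contains $p(x)$. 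This is what forces the fixed-point argument to treat the eigenfunction direction separately and to feed in rigorous enclosures of the connecting orbit $\varphi$ itself — not merely of the asymptotic matrix $\mathcal{M}_+$ — and obtaining all the constants in \eqref{eq:ProjectionMatrixBound} explicitly, and small enough to be satisfied by a moderate $L_+$, is the delicate part of the section.
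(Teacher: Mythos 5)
Your setup is sound and matches the paper's framing: the diagonalization of $\mathcal{A}_{+\infty}$, the symplectic pairings $\omega(V_i^{+,s},V_j^{+,u})$, and the identification (via (H4) and Lagrangian-ness) of the candidate limiting plane spanned by one stable eigendirection and $n-1$ unstable ones. You also correctly identify the central obstruction: the limiting plane is a saddle for the Grassmannian flow on $\Lambda(n)$. But your proposed resolution — a variation-of-constants fixed point for a frame of $\mathbb{E}^u_-(x)$ on $[L_+,\infty)$, representing $\mathbb{E}^u_-(x)$ as a graph over $\mathbb{E}^u_{-,\infty}$ and showing the graph stays bounded because $\mathbb{E}^u_-$ lies on the stable manifold of the saddle — is precisely the approach the paper deliberately avoids, and for good reason. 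In a validated computation you only have an interval enclosure of the frame at $x=L_+$, and essentially none of the elements of that enclosure lie on the saddle's stable manifold; integrating forward, the expanding component $V_{j_0}^{+,u}$ destroys the enclosure. Treating ``the eigenfunction direction separately'' does not rescue this: you have no checkable certificate, from finitely many validated quantities, that the dangerous $V_{j_0}^{+,u}$-coefficient is exactly zero. The step ``the graph representation persists for all $x\ge L_+$'' is exactly what needs to be proved and cannot be obtained by this route.

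The paper's proof of Theorem~\ref{thm:main} is instead a short incompatibility argument that never tracks $\mathbb{E}^u_-(x)$ toward its limit. Fix $x\ge L_+$ and expand the $n$ frame columns in the full $2n$-dimensional solution basis $\{\tilde V_j^{+,s}(x),\tilde V_j^{+,u}(x)\}$ from \eqref{E:soln-exp-2}, producing coefficients $\alpha$ (for $U_\varphi$) and $\beta^k,\gamma^k$ (for the other columns). The Lagrangian condition $\omega(U_\varphi,U_k)=0$ forces $\alpha\perp\Gamma_1$ as in \eqref{E:first-condition}, where $\Gamma_1$ is the span of the $\gamma^k$'s up to small corrections; the hypothetical conjugate point $\mathbb{E}^u_-(x)\cap\mathcal{D}\neq\{0\}$ forces $\alpha\in\Gamma_2$ as in \eqref{E:second-condition}, with $\Gamma_2$ again close to the span of the $\gamma^k$'s. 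Proposition~\ref{prop:main2} gives $\|\pi_{\Gamma_1}-\pi_{\Gamma_2}\|<1$, and then
\[
|\alpha|=|\pi_{\Gamma_2}\alpha|=|\pi_{\Gamma_2}\alpha-\pi_{\Gamma_1}\alpha|\le\|\pi_{\Gamma_1}-\pi_{\Gamma_2}\|\,|\alpha|<|\alpha|
\]
is a contradiction. Note that \eqref{eq:ProjectionMatrixBound} is a bound on the angle between these two $(n-1)$-dimensional subspaces of $\mathbb{R}^n$, not on a graph of $\mathbb{E}^u_-(x)$ over $\mathbb{E}^u_{-,\infty}$ as you describe. The exponentially growing direction never has to be controlled, because the contradiction is produced pointwise in $x$ from the coefficient vectors, and the naively divergent factors are hidden inside the projection $\pi_\Gamma=\Gamma(\Gamma^T\Gamma)^{-1}\Gamma^T$ whose norm is one (see Remark~\ref{rem:gamma}); that observation, not the stable-manifold idea, is what makes the argument go through.
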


By hypothesis (H4), we know the that $\mathbb{E}^u_-(x) \cap \mathbb{E}^s_+(x) = \mathrm{span}\{  (\varphi', D\varphi'') \}$, where $\mathbb{E}^s_+(x) = \mathbb{E}^s_+(x; \lambda=0)$ is the subspace of solutions to \eqref{E:main-sys} that decay to zero as $x \to +\infty$. Denote $U_\varphi = (\varphi', D\varphi'')$ and write
\begin{equation}\label{E:subspace-rep}
\mathbb{E}^u_-(x) = \mathrm{span}\{ U_\varphi(x), U_1(x), \dots, U_{n-1}(x)\}, \qquad \mathbb{E}^u_-(x) \in \Lambda(n) \quad \forall x,
\end{equation}
where the solutions $U_k(x)$ satisfy $U_k(x) \notin \mathbb{E}^s_+(x)$ for all $k$. In order to determine $L_+$, we would like to be use the fact that, for large $x$, $\mathbb{E}^u_-(x)$ will look like the asymptotic unstable eigenvectors at $+\infty$, with the exception of the asymptotic behavior of $U_\varphi(x)$.

\begin{figure}[h]
\centering 
\includegraphics[width=0.9\linewidth]{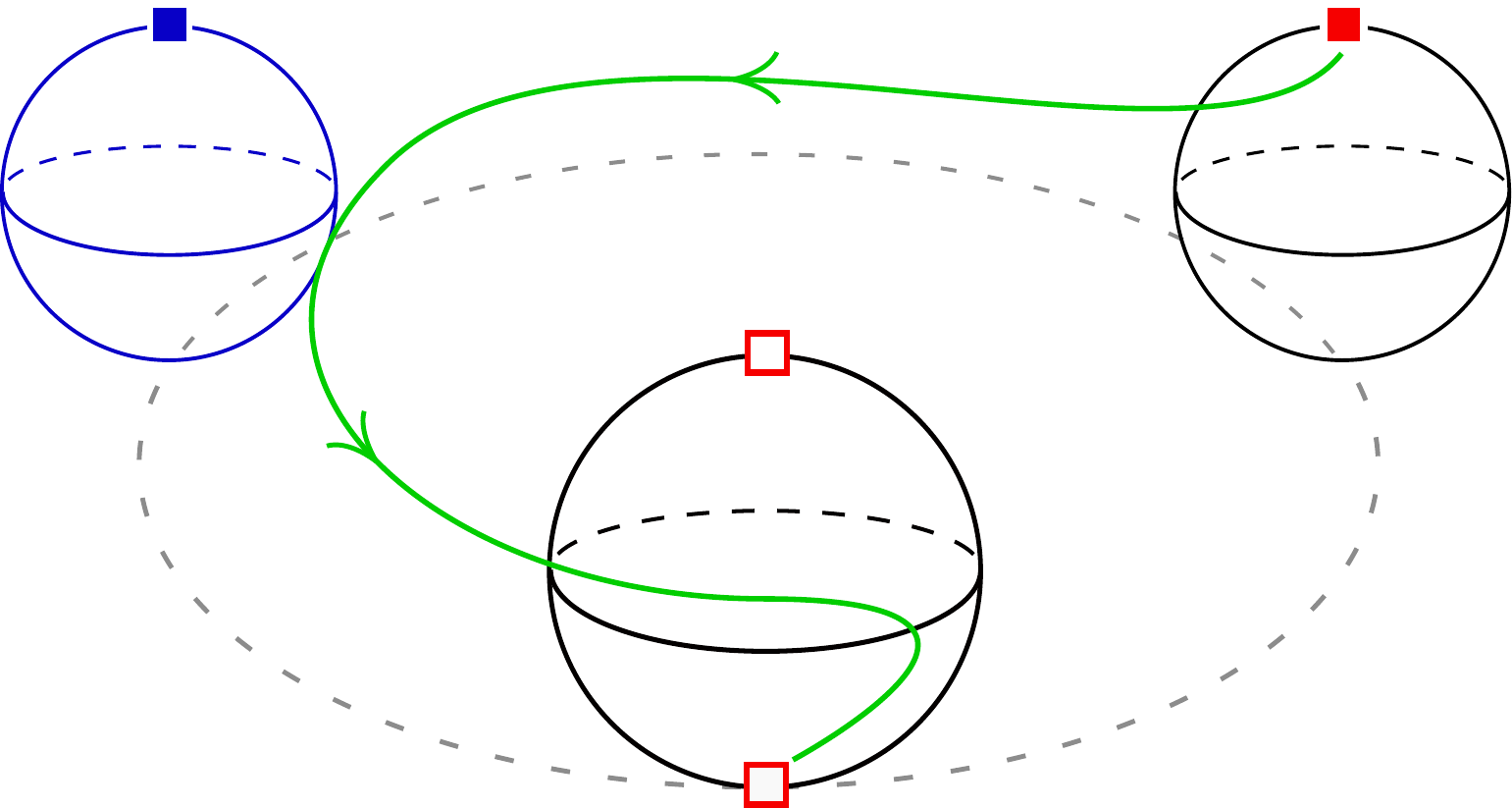}
\caption{  A cartoon  of $ \Lambda(2)$, which is double covered by $ S^1 \times S^2$. The upper left sphere represents the set of planes which nontrivially intersect $\cD$ (in fact homotopic to a Klein bottle with one loop collapsed), and $ \cD$ is  represented by the upper left square. The trajectory $\mathbb{E}^u_-(x)$  is represented by the green line.}
\label{fig:L(2)}
\end{figure}
One way to think of the evolution of $\mathbb{E}^u_-(x)$ is as follows. In Figure \ref{fig:L(2)}, we have depicted a cartoon of the path $\mathbb{E}^u_-(x)$ traces out in $ \Lambda(n)$. 
The beginning of this trajectory $\mathbb{E}_-^u(- \infty)$ is the unique point $\mathbb{E}_{-\infty}^u \in \Lambda(n)$, depicted as solid square in the upper right sphere. 
By counting the number of conjugate points of $\mathbb{E}^u_-(x)$ (ie the number of nontrivial intersections with $\cD$ with orientation) one is able compute the Maslov index of the path $\mathbb{E}^u_-(x)$, which for 
a loop $\gamma: S^1 \to \Lambda(n)$ is given by its equivalence class in the fundamental group $[\gamma] \in \pi_1(\Lambda(n)) \cong \Z$.

It follows from Hypothesis \ref{H:potential} and \ref{H:simple} that  $\lim_{x \to +\infty} U_{\varphi}(x) / | U_{\varphi}(x)| $  must limit to one of the $n$ stable eigendirections of $ \cA_+$. Furthermore, $\mathbb{E}_-^u(x)$ must limit as $ x \to + \infty$ to a Lagrangian plane in $\Lambda(n)$ spanned by $\lim_{x \to +\infty} U_{\varphi}(x) / | U_{\varphi}(x)| $ and $n-1$ unstable eigenvectors of $ \cA_+$. 
Consequently  $\mathbb{E}_-^u(+ \infty)$ must be one of $n$ possible points in $\Lambda(n)$, depicted as the two hollow squares in the bottom sphere of Figure \ref{fig:L(2)}. Note that each of these $n$ possible limit points in $\Lambda(n)$ is unstable under the flow defined by \eqref{E:eval-sys-plus-infinity}. Hence showing that $\mathbb{E}_-^u(x)$ limits to a particular one of these points can be difficult. However such a calculation is not necessary; to compute $L_+$, we do not need to show that $\mathbb{E}_-^u(L_+)$  is close to its limit point, but rather it suffices to show that $\mathbb{E}_-^u(x)$  is bounded away from $\cD$ for $ x \geq L_+$. 

To begin our analysis, we wish to obtain a good asymptotic description of the solutions that are asymptotic to the eigendirections at $+\infty$, just as we did in \S \ref{S:L_-} at $-\infty$. In particular, we can construct solutions to \eqref{E:eval-sys-plus-infinity} of the form
\[
\tilde V_j^{+,s}(x) = e^{-\nu_j^+ x} (V_j^{+,s} + \tilde{W}_j^{+, s}(x)), \qquad \tilde V_j^{+,u}(x) = e^{\nu_j^{+} x} (V_j^{+,u} + \tilde{W}_j^{+,u}(x)), \qquad j = 1, \dots n,
\]
where
\begin{equation}\label{E:asym-evectors}
V_j^{+,s} = \frac{1}{\sqrt{2\nu_j^+}}\begin{pmatrix} D^{-1/2}z_j^+ \\ -\nu_j^+ D^{1/2}z_j^+ \end{pmatrix}, \qquad V_j^{+,u} =  \frac{1}{\sqrt{2\nu_j^+}}\begin{pmatrix} D^{-1/2}z_j^+ \\ \nu_j^+ D^{1/2}z_j^+ \end{pmatrix}.
\end{equation}
Using notation analogous to the previous section, we denote $W_j^{+, s/u}(x) = V_j^{+,s/u} + \tilde{W}_j^{+, s/u}(x)$. 

The vectors $\{z_j^+\}_{j=1}^n$ are eigenvectors of the negative symmetric matrix $D^{-1/2}\mathcal{M}_+D^{-1/2}$ and form an orthonormal basis for $\mathbb{R}^n$. The corresponding eigenvalues are $\gamma_j^+ = -(\nu_j^+)^2$. We will assume the eigenvalues, which are distinct due to (H2), are ordered so that
\[
\nu_1^+ > \nu_2^+ > \dots > \nu_n^+  > 0.
\]
Note that we have normalized the eigenvectors $\{V_j^{+, s/u}\}$ so that they form a symplectic basis. (This normalization was not necessary in \S\ref{S:L_-}.) In particular,
\begin{equation}\label{E:asym-lag}
\omega(V_j^{+, s/u}, V_k^{+, s/u}) = 0 \quad \mbox{if} \quad  k \neq j, \qquad \omega(V_j^{+,u}, V_j^{+,s}) = 1.
\end{equation}
Analogous to Proposition \ref{prop:L-contraction} we have the following result, which will lead to bounds on the error terms $\tilde{W}_j^{+, s/u}$.
\begin{Proposition}\label{prop:L+bounds} Let
\begin{equation}\label{E:contraction-constant-plus}
\lambda_{\mathcal{T}_+} :=  \frac{K_+C_M}{\eta_M}e^{-\eta_M L_+},
\end{equation}
where the constants $C_M$ and $\eta_M$ are such that $\|\mathcal{A}_+(x)\| \leq C_M e^{\eta_M x}$ for $x \geq 0$ as guaranteed by $(H3)$, and the constant $K_+$ is defined similarly to \eqref{E:dichotomy-estimates} but using $\mathcal{A}_{+\infty}$. For any $L_+ > 0$ such that $\lambda_{\mathcal{T}_+} < 1$, the functions $W_j^{+, s/u}(x)$ satisfy
\begin{equation}\label{E:plus-infty-asymptotics} 
\|W^{+, s/u}_j(\cdot) - V^{+, s/u}_j\|_{L^\infty([L_+, \infty),\R^{2n})} \leq \frac{\lambda_{\mathcal{T}_+}}{1 - \lambda_{\mathcal{T}_+}} |V_j^{+, s/u}|.
\end{equation}
\end{Proposition}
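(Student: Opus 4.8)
The plan is to follow the pattern of the proof of Proposition~\ref{prop:L-contraction}, but now at $+\infty$ and carried out simultaneously for the $2n$ solutions $\tilde V_j^{+,s}$, $\tilde V_j^{+,u}$. Fix $j$ and a sign of the shift. Substituting $\tilde V_j^{+,u}(x)=e^{\nu_j^+ x}W(x)$ into \eqref{E:eval-sys-plus-infinity} gives $W_x=[\mathcal{A}_{+\infty}-\nu_j^+ I]W+\mathcal{A}_+(x)W$, and substituting $\tilde V_j^{+,s}(x)=e^{-\nu_j^+ x}W(x)$ gives $W_x=[\mathcal{A}_{+\infty}+\nu_j^+ I]W+\mathcal{A}_+(x)W$; in both cases the corresponding unshifted eigenvector ($V_j^{+,u}$ resp.\ $V_j^{+,s}$) lies in the kernel of the shifted constant matrix. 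As in \S\ref{S:L_-} I would diagonalize $\mathcal{A}_{+\infty}=Q_+\Lambda_+Q_+^{-1}$, set $K_+=\|Q_+\|\cdot\|Q_+^{-1}\|$, and split $\mathbb{R}^{2n}$ according to the sign of the real part of the eigenvalues of the \emph{shifted} matrix: let $P^s$ be the projection onto the eigenspace with strictly negative real part and $P^{cu}$ the projection onto the eigenspace with nonnegative real part (which contains the one-dimensional kernel). Since the eigenvector matrix $Q_+$ is the same for all $2n$ choices of shift, the dichotomy bounds
\[
\|e^{(\mathcal{A}_{+\infty}\mp\nu_j^+)(x-y)}P^s\|\leq K_+ e^{-\eta_+(x-y)}\ \ (x\geq y),\qquad \|e^{(\mathcal{A}_{+\infty}\mp\nu_j^+)(x-y)}P^{cu}\|\leq K_+\ \ (x\leq y)
\]
hold with one and the same constant $K_+$ for every $j$ and every sign (the auxiliary rate $\eta_+>0$, like $\eta_-$ in \S\ref{S:L_-}, will not enter the final constant).

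Next I would introduce, on $X=L^\infty([L_+,\infty),\mathbb{R}^{2n})$, the analogue of the operator $\mathcal{T}_-$ from \eqref{E:def-T}:
\[
\mathcal{T}_+(W)(x)=V_j^{+,\bullet}+\int_{L_+}^x e^{(\mathcal{A}_{+\infty}\mp\nu_j^+)(x-y)}P^s\mathcal{A}_+(y)W(y)\,\rmd y-\int_x^\infty e^{(\mathcal{A}_{+\infty}\mp\nu_j^+)(x-y)}P^{cu}\mathcal{A}_+(y)W(y)\,\rmd y,
\]
with $\bullet\in\{s,u\}$ matching the shift. Because the shifted matrix annihilates $V_j^{+,\bullet}$, any fixed point of $\mathcal{T}_+$ solves the shifted ODE, hence $e^{\pm\nu_j^+ x}W(x)$ solves \eqref{E:eval-sys-plus-infinity}. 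The heart of the argument is the Lipschitz estimate. Using the two dichotomy inequalities together with \eqref{E:A-bound-2}, and observing that the two integrals run over the \emph{disjoint} intervals $[L_+,x]$ and $[x,\infty)$ whose union is $[L_+,\infty)$ --- so that the bound $e^{-\eta_+(x-y)}\leq 1$ on $[L_+,x]$ lets the two contributions be merged into a single integral with no double counting --- one obtains
\[
|\mathcal{T}_+(W_1)(x)-\mathcal{T}_+(W_2)(x)|\leq \int_{L_+}^\infty K_+C_M e^{-\eta_M y}\,\rmd y\,\|W_1-W_2\|_X=\frac{K_+C_M}{\eta_M}e^{-\eta_M L_+}\|W_1-W_2\|_X=\lambda_{\mathcal{T}_+}\|W_1-W_2\|_X.
\]
Thus for any $L_+$ with $\lambda_{\mathcal{T}_+}<1$, $\mathcal{T}_+$ is a contraction on $X$ with constant $\lambda_{\mathcal{T}_+}$, with unique fixed point $W=W_j^{+,\bullet}$.

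Finally, exactly as in the proof of Proposition~\ref{prop:L-contraction}, the same estimate applied to the constant function $V_j^{+,\bullet}$ gives $\|\mathcal{T}_+(V_j^{+,\bullet})-V_j^{+,\bullet}\|_X\leq \lambda_{\mathcal{T}_+}|V_j^{+,\bullet}|$, and summing the telescoping series $W_j^{+,\bullet}-V_j^{+,\bullet}=\sum_{k\geq 0}[\mathcal{T}_+^{k+1}(V_j^{+,\bullet})-\mathcal{T}_+^k(V_j^{+,\bullet})]$ yields $\|W_j^{+,\bullet}-V_j^{+,\bullet}\|_X\leq \frac{\lambda_{\mathcal{T}_+}}{1-\lambda_{\mathcal{T}_+}}|V_j^{+,\bullet}|$, which is \eqref{E:plus-infty-asymptotics}. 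The computation is routine given Proposition~\ref{prop:L-contraction}; the only points demanding care --- and the closest thing to an obstacle --- are (i) correctly assigning the zero eigendirection of the shifted matrix to a spectral subspace: for the stable solutions the shift by $-\nu_j^+$ pushes $n$ eigenvalues to the right of the imaginary axis (more than in \S\ref{S:L_-}), yet the kernel remains one dimensional and must be grouped with $P^{cu}$ so that the corresponding mode is integrated from $+\infty$; and (ii) keeping the two variation-of-constants integrals on disjoint intervals, which is what makes the contraction constant exactly $\lambda_{\mathcal{T}_+}$ rather than $2\lambda_{\mathcal{T}_+}$.
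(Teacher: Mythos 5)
Your proposal is correct and reproduces exactly the argument the paper intends: the paper simply asserts the result as ``analogous to Proposition~\ref{prop:L-contraction}'' without a written proof, and your shifted variable, exponential-dichotomy splitting, fixed-point operator $\mathcal{T}_+$, Lipschitz estimate by merging the two integrals over $[L_+,x]$ and $[x,\infty)$, and telescoping bound are precisely the analogous steps. One very small imprecision, inessential to the argument: in your closing remark (i) the shift for the stable solutions is by $+\nu_j^+$ (one factors out $e^{-\nu_j^+ x}$), and the number of eigenvalues of $\mathcal{A}_{+\infty}+\nu_j^+ I$ with nonnegative real part is $2n-j+1$, not $n$; this does not affect the estimate since only the uniform constant $K_+$ enters $\lambda_{\mathcal{T}_+}$.
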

Define
\begin{equation}\label{E:def-epsilon0}
\epsilon_0 := \frac{\lambda_{\mathcal{T}_+}}{1 - \lambda_{\mathcal{T}_+}}  \max_{j,s,u} |V_j^{+, s/u}|,
\end{equation}
which we can make small by choosing $L_+$ large. The Proposition implies 
\begin{equation}\label{E:W-error}
|\tilde{W}_j^{+, s/u}(x)| \leq \epsilon_0, \qquad \forall \quad x \geq L_+, \qquad \forall j, s, u.
\end{equation}

The functions $\{\tilde V_j^{+, s}(x)\}_{j=1}^n$ form a basis for $\mathbb{E}^s_+(x)$ which is unique up to multiplication by an element of $GL(n)$. 
The functions $\{\tilde V_j^{+, u}(x)\}_{j=1}^n$ form one basis for $\mathbb{E}^u_+(x)$. If we let $\tilde V^{+, u}(x)$ and $\tilde V^{+, s}(x)$ denote the matrices whose columns are $\tilde V_j^{+, s/u}(x)$ then, in general, a basis for $\mathbb{E}^u_+(x)$ may be given by the columns of $  \tilde V^{+, u}(x) \cdot M_1 + \tilde V^{+, s}(x) \cdot  M_2$ for arbitrary matrices $M_1 \in GL(n)$ and $M_2 \in \R^{n \times n}$. 

Recall \eqref{E:subspace-rep}. Using the functions $\{\tilde V_j^{+, s/u}(x)\}_{j=1}^n$ as a basis for arbitrary solutions on $x \geq 0$, we can write
\begin{eqnarray}
U_\varphi(x) &=&  \sum_{j=1}^n \tilde \alpha_j e^{-\nu_j^+ x} (V_j^{+,s} + \tilde{W}_j^{+,s}(x)), \label{E:soln-exp} \\
U_k(x) &=& \sum_{j=1}^n[ \tilde \gamma_j^k e^{\nu_j^+ x} (V_j^{+,u} + \tilde{W}_j^{+,u}(x)) + \tilde \beta_j^k e^{-\nu_j^+ x} (V_j^{+,s} + \tilde{W}_j^{+,s}(x))], \nonumber
\end{eqnarray}
for all $x \geq 0$, where $\tilde \alpha_j, \tilde \beta_j^k, \tilde \gamma_j^k \in \mathbb{R}$ are constants that are uniquely determined. For notational convenience below, we will for fixed $x$ write this as
\begin{equation}\label{E:soln-exp-2}
U_\varphi(x) =  \sum_{j=1}^n  \alpha_j (V_j^{+,s} + \tilde{W}_j^{+,s}(x)), \qquad 
U_k(x) = \sum_{j=1}^n[ \gamma_j^k  (V_j^{+,u} + \tilde{W}_j^{+,u}(x)) +  \beta_j^k (V_j^{+,s} + \tilde{W}_j^{+,s}(x))],
\end{equation}
where $\alpha_j = \tilde \alpha_j e^{-\nu_j^+ x}$, $\beta_j^k = \tilde \beta_j^k e^{-\nu_j^+ x}$, and $\gamma^k_j = \tilde \gamma_j^k e^{\nu_j^+ x}$. 
We also note that these solutions can be written as
\begin{eqnarray}
U_\varphi(x) &=& (\mathcal{V}^s + \mathcal{W}^s(x))\alpha = (\mathcal{V}^s + \mathcal{W}^s(x))\Lambda^{-1} \tilde \alpha \label{E:soln-exp-3} \\
U_k(x) &=& (\mathcal{V}^s + \mathcal{W}^s(x))\beta^k + (\mathcal{V}^u + \mathcal{W}^u(x))\gamma^k = (\mathcal{V}^s + \mathcal{W}^s(x))\Lambda^{-1} \tilde \beta^k + (\mathcal{V}^u + \mathcal{W}^u(x))\Lambda \tilde \gamma^k, \nonumber
\end{eqnarray}
where
\[
\mathcal{V}^{s,u} = [V_1^{+, s/u} | \dots | V_n^{+, s/u}] \in \mathbb{R}^{2n \times n}, \qquad \mathcal{W}^{s,u}(x) = [\tilde{W}_1^{+, s/u}(x) | \dots | \tilde{W}_n^{+, s/u}(x)] \in \mathbb{R}^{2n \times n}, 
\]
\[
\Lambda(x) = \mathrm{diag}(e^{\nu_1^+ x}, \dots e^{\nu_n^+ x}) \in \mathbb{R}^{n \times n},
\]
and for each $ x \geq L_+$ and $ k$ we have 
\begin{align*}
\alpha &= (\alpha_1, \dots, \alpha_n) \in \mathbb{R}^{n}, 
	&
\gamma^k &= (\gamma_1^k, \dots, \gamma^k_n) \in \mathbb{R}^{n}, 
	 &
\beta^k &= (\beta_1^k, \dots, \beta^k_n) \in \mathbb{R}^{n},
\end{align*}
with corresponding definitions for $\tilde \alpha$, $\tilde \beta^k$, and $\tilde \gamma^k$. Note that, because $U_k(x) \notin \mathbb{E}^s_+(x)$, it must be true that $\gamma^k \neq 0$ for all $k$. Moreover, the $\gamma^k$'s must be independent. Otherwise, if $\gamma^k = c \gamma^j$, then $U_k - cU_j \in \mathbb{E}^s_+(x)$, which would contradict Hypothesis (H4), that the zero eigenvalue is simple. 

\begin{Remark}
The vectors $\tilde \alpha, \tilde \beta^k, \tilde \gamma^k$ can be computed numerically with known error bounds. 
\end{Remark}

The Lagrangian condition says
\[
0 = \omega(U_\varphi(x), U_k(x)) = \omega((\mathcal{V}^s + \mathcal{W}^s(x))\alpha, (\mathcal{V}^u + \mathcal{W}^u(x)) \gamma^k) \qquad \forall k,
\]
where we have used the fact that $\mathbb{E}^s_+(x) \in \Lambda(n)$ implies $\omega((\mathcal{V}^s + \mathcal{W}^s(x))\alpha, (\mathcal{V}^s + \mathcal{W}^s(x))\beta^k) = 0$. Let's write
\[
\mathcal{V}^s + \mathcal{W}^s(x) = \begin{pmatrix} \mathcal{V}^s_1 + \mathcal{W}^s_1(x) \\ \mathcal{V}^s_2 + \mathcal{W}^s_2(x) \end{pmatrix}, \qquad \mathcal{V}^u + \mathcal{W}^u(x) = \begin{pmatrix} \mathcal{V}^u_1 + \mathcal{W}^u_1(x) \\ \mathcal{V}^u_2 + \mathcal{W}^u_2(x) \end{pmatrix}.
\]
Using the explicit formulas for $\mathcal{V}^{+, s/u}_{1,2}$ given by \eqref{E:asym-evectors}, we find
\begin{equation}\label{E:explicitVs}
\mathcal{V}^s_1 = \mathcal{V}^u_1 = D^{-1/2}\mathcal{Z}\mathcal{N}, \quad -\mathcal{V}^s_2 = \mathcal{V}^u_2 = \frac{1}{2}D^{1/2}\mathcal{Z}\mathcal{N}^{-1}, \quad \mathcal{N} = \mathrm{diag}\left(\frac{1}{\sqrt{2\nu_j^+}}\right),
\end{equation}
and 
\[
\mathcal{Z} = [z_1^+ | \dots | z_n^+], \qquad \mathcal{Z}^T \mathcal{Z} = I.
\]
Using this notation, we can write the above Lagrangian condition as
\begin{align*}
0 &= \omega((\mathcal{V}^s + \mathcal{W}^s(x))\alpha, (\mathcal{V}^u + \mathcal{W}^u(x))\gamma^k) \\
&=  -\langle (\mathcal{V}^s_1 + \mathcal{W}^s_1(x)) \alpha, (\mathcal{V}^u_2 + \mathcal{W}^u_2(x))\gamma^k \rangle 
+ \langle (\mathcal{V}^s_2 + \mathcal{W}^s_2(x)) \alpha, (\mathcal{V}^u_1 + \mathcal{W}^u_1(x))\gamma^k \rangle \\
&= -\langle \alpha, 
[
 (\mathcal{V}^s_1 + \mathcal{W}^s_1(x))^T (\mathcal{V}^u_2 + \mathcal{W}^u_2(x))
 -
 (\mathcal{V}^s_2 + \mathcal{W}^s_2(x))^T(\mathcal{V}^u_1 + \mathcal{W}^u_1(x)) 
 ]\gamma^k \rangle \\
&=  -\langle \alpha, [
1 
- (\mathcal{V}_2^s)^T\mathcal{W}^u_1(x) 
+ (\mathcal{V}_1^s)^T\mathcal{W}^u_2(x) 
- (\mathcal{W}_2^s(x))^T(\mathcal{V}^u_1 + \mathcal{W}^u_1(x)) 
+ (\mathcal{W}_1^s(x))^T(\mathcal{V}^u_2 + \mathcal{W}^u_2(x))
]\gamma^k \rangle.
\end{align*}
Therefore, we have found
\begin{eqnarray}
\alpha &\perp& \Gamma_1, \qquad \Gamma_1 = \mathrm{span}\{ \Gamma^k_1: k = 1, \dots, n-1\} \label{E:first-condition} \\
 \Gamma^k_1 &=& \gamma^k - [(\mathcal{V}_2^s)^T\mathcal{W}^u_1(x) - (\mathcal{V}_1^s)^T\mathcal{W}^u_2(x) + (\mathcal{W}_2^s(x))^T(\mathcal{V}^u_1 + \mathcal{W}^u_1(x)) - (\mathcal{W}_1^s(x))^T(\mathcal{V}^u_2 + \mathcal{W}^u_2(x))]\gamma^k. \nonumber
\end{eqnarray}
This means that a frame matrix for the $n-1$ dimensional subspace $\Gamma_1\subset \mathbb{R}^n$ is given by
\begin{eqnarray}
\Gamma_1: && \quad \Gamma + Q\Gamma, \qquad \Gamma = [\gamma^1 | \dots | \gamma^{n-1}] \label{E:Gamma1-frame} \\
 Q &=& (\mathcal{V}_1^s)^T\mathcal{W}^u_2(x) - (\mathcal{V}_2^s)^T\mathcal{W}^u_1(x) + (\mathcal{W}_1^s(x))^T(\mathcal{V}^u_2 + \mathcal{W}^u_2(x)) - (\mathcal{W}_2^s(x))^T(\mathcal{V}^u_1 + \mathcal{W}^u_1(x)). \nonumber
\end{eqnarray}

Suppose now that there is an intersection of $\mathbb{E}^u_-(x)$ with the Dirichlet subspace. Since this involves the first $n$ components of vectors in $\mathbb{R}^{2n}$, we introduce the projection operator $\pi_1: \mathbb{R}^{2n} \to \mathbb{R}^{2n}$, defined by
\begin{align} \label{eq:ProjTop}
	\pi_1(u_1, \dots, u_{2n}) &= (u_1, \dots u_n, 0, \dots 0) \in \mathbb{R}^{2n}.
\end{align}
With this notation, an intersection of $\mathbb{E}^u_-(x)$ with the Dirichlet subspace means 
\begin{align*}
0 &= \det [ \pi_1 U_{\varphi'}(x) , \pi_1 U_{1}(x) , \dots , \pi_1 U_{n-1}(x)  ] \\
0 &= \mathrm{det}[ (\mathcal{V}^s_1 + \mathcal{W}^s_1(x))\alpha, (\mathcal{V}^u_1 + \mathcal{W}^u_1(x))\gamma^1 + (\mathcal{V}^s_1 + \mathcal{W}^s_1(x))\beta^1, \dots, (\mathcal{V}^u_1 + \mathcal{W}^u_1(x))\gamma^{n-1} + (\mathcal{V}^s_1 + \mathcal{W}^s_1(x))\beta^{n-1}] \\
&= \mathrm{det}(\mathcal{V}^s_1 + \mathcal{W}^s_1(x))\\
&\qquad  \times \mathrm{det}[\alpha, \beta^1 + (\mathcal{V}^s_1 + \mathcal{W}^s_1(x))^{-1}(\mathcal{V}^u_1 + \mathcal{W}^u_1(x))\gamma^1, \dots, \beta^{n-1} + (\mathcal{V}^s_1 + \mathcal{W}^s_1(x))^{-1}(\mathcal{V}^u_1 + \mathcal{W}^u_1(x))\gamma^{n-1}].
\end{align*}
Moreover,
\begin{eqnarray*}
(\mathcal{V}^s_1 + \mathcal{W}^s_1(x))^{-1}(\mathcal{V}^u_1 + \mathcal{W}^u_1(x)) &=& (D^{-1/2}\mathcal{Z}\mathcal{N} + \mathcal{W}^s_1(x))^{-1}(D^{-1/2}\mathcal{Z}\mathcal{N} + \mathcal{W}^u_1(x)) \\
&=& [I + (D^{-1/2}\mathcal{Z}\mathcal{N})^{-1} \mathcal{W}^s_1(x)]^{-1} [I + (D^{-1/2}\mathcal{Z}\mathcal{N})^{-1} \mathcal{W}^u_1(x)].
\end{eqnarray*}
Thus,
\begin{eqnarray}
\alpha &\in& \Gamma_2, \qquad \Gamma_2 = \mathrm{span}\{ \beta^k + \Gamma_2^k: k = 1, \dots, n-1\} \label{E:second-condition} \\
\Gamma_2^k &:=& [I + \mathcal{N}^{-1} \mathcal{Z}^T D^{1/2} \mathcal{W}^s_1(x)]^{-1} [I + \mathcal{N}^{-1} \mathcal{Z}^T D^{1/2} \mathcal{W}^u_1(x)]\gamma^k. \nonumber
\end{eqnarray}
This means a frame matrix for $\Gamma_2$ is given by
\begin{eqnarray}
\Gamma_2: && \quad \Gamma + P\Gamma + B, \qquad \Gamma = [\gamma^1 | \dots | \gamma^{n-1}], \qquad B = [\beta^1| \cdots | \beta^{n-1}] \label{E:Gamma2-frame} \\
 P &=& \mathcal{N}^{-1} \mathcal{Z}^T D^{1/2} \mathcal{W}^u_1(x) + \left( \sum_{k=1}^\infty (-1)^k (\mathcal{N}^{-1} \mathcal{Z}^T D^{1/2} \mathcal{W}^s_1(x))^k \right)[I + \mathcal{N}^{-1} \mathcal{Z}^T D^{1/2} \mathcal{W}^u_1(x)], \nonumber
\end{eqnarray}
where, via \eqref{E:W-error}, we are assuming that $L_+$ is sufficiently large so that 
\begin{equation}\label{E:extra-L+-cond}
\|\mathcal{N}^{-1} \mathcal{Z}^T D^{1/2} \mathcal{W}^s_1(x)\| < 1
\end{equation} 
for all $x \geq L_+$. We note this will follow from the condition that $\epsilon_0 \sqrt{2n \nu_1^+ d_{max}} < 1$, which appears in the statement of Proposition \ref{prop:main2}. Let $\pi_{\Gamma_1}, \pi_{\Gamma_2} \in \mathbb{R}^{n\times n}$ be the projections onto $\Gamma_{1,2}$, respectively.

\begin{Proposition}\label{prop:main}
There exists an $L_+$ sufficiently large so that, for any $x \geq L_+$, $\|\pi_{\Gamma_1} - \pi_{\Gamma_2}\| < 1$.  
\end{Proposition}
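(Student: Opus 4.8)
The plan is to show that, for every $x\geq L_+$, the two $(n-1)$-dimensional subspaces $\Gamma_1$ and $\Gamma_2$ are both uniformly close to the single subspace $\mathrm{col}\,\Gamma$, where $\Gamma=[\gamma^1|\cdots|\gamma^{n-1}]$, with the closeness tending to $0$ as $L_+\to\infty$; then $\|\pi_{\Gamma_1}-\pi_{\Gamma_2}\|\leq\|\pi_{\Gamma_1}-\pi_{\mathrm{col}\,\Gamma}\|+\|\pi_{\mathrm{col}\,\Gamma}-\pi_{\Gamma_2}\|<1$ once $L_+$ is taken large enough. The only abstract ingredient needed is the elementary observation that if $A\in\mathbb{R}^{n\times m}$ has full column rank then $A^+A=I_m$, so $A+E=(I+EA^+)A$ exactly; hence whenever $\|EA^+\|<1$ the matrix $I+EA^+$ is invertible, $\mathrm{col}(A+E)=(I+EA^+)\,\mathrm{col}\,A$, and $\|\pi_{\mathrm{col}(A+E)}-\pi_{\mathrm{col}\,A\,}\|\leq g(\|EA^+\|)$ for a fixed continuous function $g$ with $g(0)=0$ (obtained by estimating $\pi_{(I+F)V}$ against $\pi_V$ using an orthonormal basis of $V$, a bound independent of $V$). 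Taking $A=\Gamma$ in both cases funnels the entire $x$-dependence into the single scalar $\|E\Gamma^+\|$, which makes the uniformity in $x$ transparent.

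First, control the multiplicative error matrices. By Proposition \ref{prop:L+bounds} and \eqref{E:W-error}, for all $x\geq L_+$ every block of $\mathcal{W}^{s/u}(x)$ has norm at most $\epsilon_0=\epsilon_0(L_+)$, and $\epsilon_0\to 0$ as $L_+\to\infty$ since $\lambda_{\mathcal{T}_+}\to 0$. Substituting this and the explicit formulas \eqref{E:explicitVs} into the definitions of $Q$ in \eqref{E:Gamma1-frame} and $P$ in \eqref{E:Gamma2-frame} (and taking $L_+$ large enough that \eqref{E:extra-L+-cond} holds, so the Neumann series defining $P$ converges) yields $\|Q\|,\|P\|\leq C\epsilon_0$, with $C$ depending only on $D$, $\mathcal{M}_+$, $n$. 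Next, using $\gamma_j^k=\tilde\gamma_j^k e^{\nu_j^+x}$ and $\beta_j^k=\tilde\beta_j^k e^{-\nu_j^+x}$ from \eqref{E:soln-exp}--\eqref{E:soln-exp-2}, write $\Gamma=\Lambda(x)\tilde\Gamma$ and $B=\Lambda(x)^{-1}\tilde B$, where $\Lambda(x)$ is as in \eqref{E:soln-exp-3} and $\tilde\Gamma=[\tilde\gamma^1|\cdots|\tilde\gamma^{n-1}]$, $\tilde B=[\tilde\beta^1|\cdots|\tilde\beta^{n-1}]$ are $x$-independent. The crucial point is that $\tilde\Gamma$ has full rank $n-1$ — this is exactly the independence of the $\gamma^k$ established right after \eqref{E:soln-exp-3}, which is where Hypothesis (H4) enters — so $\sigma_{\min}(\tilde\Gamma)>0$ is a fixed constant. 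Since $\nu_n^+=\min_j\nu_j^+>0$, the smallest singular value of $\Lambda(x)$ equals $e^{\nu_n^+x}$, whence $\|\Gamma v\|=\|\Lambda(x)\tilde\Gamma v\|\geq e^{\nu_n^+x}\sigma_{\min}(\tilde\Gamma)\,\|v\|$ for all $v$, i.e. $\|\Gamma^+\|\leq e^{-\nu_n^+x}/\sigma_{\min}(\tilde\Gamma)$, and likewise $\|B\|\leq\|\Lambda(x)^{-1}\|\,\|\tilde B\|\leq e^{-\nu_n^+x}\|\tilde B\|$.

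Now apply the abstract ingredient with $A=\Gamma$. For $\Gamma_1$ the frame matrix is $(I+Q)\Gamma=\Gamma+Q\Gamma$, so $E=Q\Gamma$, $E\Gamma^+=Q\,\pi_{\mathrm{col}\,\Gamma}$, hence $\|E\Gamma^+\|\leq\|Q\|\leq C\epsilon_0$ and $\|\pi_{\Gamma_1}-\pi_{\mathrm{col}\,\Gamma}\|\leq g(C\epsilon_0)$. For $\Gamma_2$ the frame matrix is $\Gamma+P\Gamma+B$, so $E=P\Gamma+B$, $E\Gamma^+=P\,\pi_{\mathrm{col}\,\Gamma}+B\Gamma^+$, hence
\[
\|E\Gamma^+\|\leq\|P\|+\|B\|\,\|\Gamma^+\|\leq C\epsilon_0+\frac{\|\tilde B\|}{\sigma_{\min}(\tilde\Gamma)}\,e^{-2\nu_n^+x},
\]
and $\|\pi_{\Gamma_2}-\pi_{\mathrm{col}\,\Gamma}\|\leq g\!\left(C\epsilon_0+\tfrac{\|\tilde B\|}{\sigma_{\min}(\tilde\Gamma)}e^{-2\nu_n^+x}\right)$. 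For $x\geq L_+$ one has $e^{-2\nu_n^+x}\leq e^{-2\nu_n^+L_+}$, so both bounds are controlled by a quantity depending only on $L_+$ that vanishes as $L_+\to\infty$; choosing $L_+$ so that the sum of the two bounds is strictly less than $1$ and invoking the triangle inequality proves the Proposition. (One may at this stage also read off an explicit admissible $L_+$, which is the content of the quantitative statement referred to in Proposition \ref{prop:main2}.)

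The main obstacle is the additive term $B$ in the frame matrix of $\Gamma_2$. In contrast to the multiplicative errors $Q$ and $P$, which are small simply because $\mathcal{W}^{s/u}$ is small, $B$ is not a priori negligible; the point is that $\|B\|$ decays like $e^{-\nu_n^+x}$ while $\sigma_{\min}(\Gamma)$ \emph{grows} like $e^{\nu_n^+x}$, so that the only quantity that matters, $\|B\|\,\|\Gamma^+\|$, decays like $e^{-2\nu_n^+x}$. The growth of $\sigma_{\min}(\Gamma)$ is precisely where simplicity of the zero eigenvalue (H4) is used: it forces the $x$-independent reduced matrix $\tilde\Gamma$ to have maximal rank $n-1$, hence $\sigma_{\min}(\tilde\Gamma)$ bounded below uniformly in $x$. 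A secondary, purely bookkeeping issue — keeping the projection estimates uniform in $x$ — is handled automatically by always taking $\Gamma$ itself, rather than a rescaling, as the reference matrix, so that the relevant perturbation size is $\|E\Gamma^+\|$.
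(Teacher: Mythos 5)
Your proof is correct, and it takes a genuinely different route than the paper's. The paper (in the combined proof of Propositions \ref{prop:main} and \ref{prop:main2}) attacks $\pi_{\Gamma_1}-\pi_{\Gamma_2}$ head-on: it expands both projections through the explicit formula $\pi_V = V(V^TV)^{-1}V^T$, cancels the common leading term $\pi_\Gamma$, and bounds the resulting expression term by term in a three-part decomposition (I, II, III), including Neumann-series remainders for the $(I+M_i)^{-1}$ factors. You instead introduce $\mathrm{col}\,\Gamma$ as a common reference plane, invoke a general-purpose lemma (the perturbation $\|\pi_{\mathrm{col}(\Gamma+E)}-\pi_{\mathrm{col}\,\Gamma}\|\leq g(\|E\Gamma^+\|)$ with $g$ fixed and $g(0)=0$, valid because $\Gamma+E=(I+E\Gamma^+)\Gamma$ when $\Gamma$ has full column rank), and split by the triangle inequality. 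The two arguments hinge on exactly the same three ingredients — smallness of $Q$ and $P$ from $\epsilon_0$, the factorization $\Gamma=\Lambda\tilde\Gamma$, $B=\Lambda^{-1}\tilde B$, and the full rank of $\tilde\Gamma$ (Hypothesis (H4)) that forces $\|B\|\,\|\Gamma^+\|\lesssim e^{-2\nu_n^+x}$ — and your bound $\|\Gamma^+\|\leq e^{-\nu_n^+x}/\sigma_{\min}(\tilde\Gamma)$ is the same fact the paper packages as $\mu^*=\sigma_{\min}(\tilde\Gamma)^2$ in the $\epsilon_b$ estimate. Moreover, funneling everything into the scalar $\|E\Gamma^+\|$ automatically handles the rescaling invariance the paper addresses separately in Remark \ref{rem:scaling-factor}, since $(EC)(\Gamma C)^+ = E\Gamma^+$ for any invertible $C$. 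The trade-off is that the paper's direct computation is exactly what is needed to produce the explicit, computer-checkable constants of Proposition \ref{prop:main2}; your version cleanly proves the qualitative statement but would require unwinding $g$ to recover a usable numerical bound, as you acknowledge in your closing parenthetical.
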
 
We are interested in being able to verify for a specific $L_+$ whether it is sufficiently large so that $\|\pi_{\Gamma_1} - \pi_{\Gamma_2}\| < 1$ for all $ x \geq L_+$.  
Hence we find it necessary to explicitly quantify Proposition \ref{prop:main} in full detail with the following proposition. Below we will prove Proposition \ref{prop:main2}, which implies also Proposition \ref{prop:main}. Define 
\begin{equation} \label{E:BGtilde}
\tilde B = [\tilde \beta^1 | \dots | \tilde \beta^{n-1}] , \qquad \tilde \Gamma = [\tilde \gamma^1 | \dots | \tilde \gamma^{n-1}].
\end{equation}
\begin{Proposition}\label{prop:main2} Fix $L_+$ and $ \epsilon_0(L_+) >0$. 
For $	D := \mathrm{diag}(d_1, \dots, d_n)$ define the following constants: 
	\begin{align*} 
	\epsilon_b &:=\frac{1}{\sqrt{\mu^*}} e^{-2 \nu_n^+ L_+ } \| \tilde{B} \| 
	&
	\mu^* &:= \min\{ \mu: \mu \in \sigma(\tilde \Gamma^T \tilde \Gamma)\}
	\\
	d_{min} &:= \mathrm{min}\{d_j: j = 1, \dots, n\}, 
	&
	 d_{max} &:= \mathrm{max}\{d_j: j = 1, \dots, n\} \\
 C_P &:=	 \frac{2   \sqrt{2 n\nu_1^+ d_{max}}}{1 - \epsilon_0  \sqrt{2n \nu_1^+ d_{max}}} 
 &
 C_Q  &:=    \sqrt{\frac{2n}{\nu_n^+ d_{min}}} + \sqrt{2n\nu_1^+ d_{max}} + 2 \epsilon_0 n  
\\
 C_{M_1} &:=  \epsilon_0 C_Q ( 2 + \epsilon_0 C_Q) 
&
C_{M_2} &:= \epsilon_0 C_P ( 2+ \epsilon_0 C_P)  + 2 \epsilon_b ( 1 + \epsilon_0 C_P) + \epsilon_b^2 .
	\end{align*}
If $C_{M_1} ,C_{M_2} ,\epsilon_0  \sqrt{2n\nu_1^+ d_{max}}<1$, then 	 for all $ x \geq L_+$ we have: 
\begin{align}
\| \pi_{\Gamma_1} - \pi_{\Gamma_2} \| &< 2 \epsilon_0 ( C_Q + C_P) + \epsilon_0^2 ( C_Q^2 + C_P^2) \nonumber \\
&\qquad 
+ \epsilon_b^2 + 
2 \epsilon_b ( 1 + \epsilon_0 C_P)  \label{eq:ProjectionMatrixBound} \\
&\qquad 
+ (1 + \epsilon_0 C_Q )^2 \frac{C_{M_1}}{1 - C_{M_1}}
+ (1 + \epsilon_0 C_P + \epsilon_b )^2 \frac{C_{M_2}}{1 - C_{M_2}} . \nonumber 
\end{align}
\end{Proposition}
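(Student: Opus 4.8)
The plan is to turn \eqref{eq:ProjectionMatrixBound} into a finite-dimensional perturbation estimate for orthogonal projections. Recall from \eqref{E:Gamma1-frame}--\eqref{E:Gamma2-frame} that $\Gamma_1$ and $\Gamma_2$ have frame matrices $(I+Q)\Gamma$ and $(I+P)\Gamma+B$, with $\Gamma=[\gamma^1|\cdots|\gamma^{n-1}]$ of full column rank (the $\gamma^k$ being independent, as noted after \eqref{E:soln-exp-3}), and with $P,Q,B$ the explicit matrices appearing there. The first step is to remove the exponential $x$-dependence: from $\gamma^k_j=e^{\nu^+_j x}\tilde\gamma^k_j$ and $\beta^k_j=e^{-\nu^+_j x}\tilde\beta^k_j$ we have $\Gamma=\Lambda(x)\tilde\Gamma$ and $B=\Lambda(x)^{-1}\tilde B$. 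Since the range of a frame matrix is unaffected by right multiplication by an invertible matrix, writing $\Gamma=G(\Gamma^T\Gamma)^{1/2}$ with $G^TG=I_{n-1}$ gives
\[
\Gamma_1=\mathrm{range}(M_1),\quad M_1:=(I+Q)G,\qquad \Gamma_2=\mathrm{range}(M_2),\quad M_2:=(I+P)G+\hat B,
\]
with $\hat B:=B(\Gamma^T\Gamma)^{-1/2}$. The gain is a uniform bound $\|\hat B\|\le \|B\|/\sigma_{\min}(\Gamma)\le (e^{-\nu^+_n x}\|\tilde B\|)/(e^{\nu^+_n x}\sqrt{\mu^*})\le\epsilon_b$ for all $x\ge L_+$, using $\sigma_{\min}(\Lambda)=e^{\nu^+_n x}$, $\|\Lambda^{-1}\|=e^{-\nu^+_n x}$ and $\sigma_{\min}(\tilde\Gamma)=\sqrt{\mu^*}$.

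Second, I bound $\|Q\|$ and $\|P\|$. From \eqref{E:W-error}, each block $\mathcal{W}^{s/u}_{1,2}(x)$ has norm $\le\sqrt n\,\epsilon_0$, while from \eqref{E:asym-evectors} and \eqref{E:explicitVs} one reads $\|\mathcal{V}^{s/u}_1\|\le(2\nu^+_n d_{min})^{-1/2}$, $\|\mathcal{V}^{s/u}_2\|\le\tfrac12\sqrt{2\nu^+_1 d_{max}}$ and $\|\mathcal{N}^{-1}\mathcal{Z}^T D^{1/2}\|\le\sqrt{2\nu^+_1 d_{max}}$. Substituting these into the definitions of $Q$ and $P$, and summing $(-1)^k(\mathcal{N}^{-1}\mathcal{Z}^TD^{1/2}\mathcal{W}^s_1)^k$ as a Neumann series — which converges precisely because $\epsilon_0\sqrt{2n\nu^+_1 d_{max}}<1$, and which also verifies \eqref{E:extra-L+-cond} — yields $\|Q\|\le\epsilon_0 C_Q$ and $\|P\|\le\epsilon_0 C_P$ with $C_Q,C_P$ as defined.

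Third, I use $\pi_{\Gamma_i}=M_i(M_i^TM_i)^{-1}M_i^T$. Expanding $M_i^TM_i=I+R_i$ with $G^TG=I$ gives $\|R_1\|\le\|Q\|(2+\|Q\|)\le C_{M_1}$ and $\|R_2\|\le\|P\|(2+\|P\|)+2\|\hat B\|(1+\|P\|)+\|\hat B\|^2\le C_{M_2}$; the assumptions $C_{M_1},C_{M_2}<1$ make $M_i^TM_i$ invertible, so $\Gamma_{1,2}$ are genuine $(n-1)$-planes, and $\|(M_i^TM_i)^{-1}-I\|\le C_{M_i}/(1-C_{M_i})$. Writing $\pi_{\Gamma_i}=M_iM_i^T+M_i[(M_i^TM_i)^{-1}-I]M_i^T$ and subtracting, the term $M_1M_1^T-M_2M_2^T$ expands, using $\|GG^T\|=1$, into pieces bounded by $2\|Q\|+\|Q\|^2+2\|P\|+\|P\|^2+2\|\hat B\|(1+\|P\|)+\|\hat B\|^2$, while the remaining two terms are bounded by $\|M_1\|^2C_{M_1}/(1-C_{M_1})$ and $\|M_2\|^2C_{M_2}/(1-C_{M_2})$ with $\|M_1\|\le1+\epsilon_0 C_Q$ and $\|M_2\|\le1+\epsilon_0 C_P+\epsilon_b$. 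Inserting $\|Q\|\le\epsilon_0C_Q$, $\|P\|\le\epsilon_0C_P$ and $\|\hat B\|\le\epsilon_b$ produces precisely \eqref{eq:ProjectionMatrixBound}, and Proposition \ref{prop:main} follows by taking $L_+$ large enough that all these constants shrink.

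The main obstacle I anticipate is twofold. Conceptually, one must quotient out the exponentially large factor $\Lambda(x)$ inside $\Gamma$ so that the contribution of $B$ is controlled uniformly in $x\ge L_+$; a direct estimate keeping $\Gamma$ explicit degrades as $x\to\infty$. Computationally, one must track the numerous cross terms in $Q$, $P$ and in $M_1M_1^T-M_2M_2^T$ carefully enough that the constants collapse exactly to $C_Q,C_P,C_{M_1},C_{M_2}$ and to the right-hand side of \eqref{eq:ProjectionMatrixBound} as stated.
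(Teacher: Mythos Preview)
Your proof is correct and follows essentially the same route as the paper: both bound $\|Q\|\le\epsilon_0 C_Q$, $\|P\|\le\epsilon_0 C_P$ from the explicit formulas, control the $B$-contribution by the same singular-value argument on $\Gamma=\Lambda\tilde\Gamma$, and expand $(M_i^TM_i)^{-1}$ as a Neumann series. Your upfront orthonormalization $G=\Gamma(\Gamma^T\Gamma)^{-1/2}$ is a clean repackaging of the paper's device of grouping factors into $\pi_\Gamma=\Gamma(\Gamma^T\Gamma)^{-1}\Gamma^T$ (indeed $GG^T=\pi_\Gamma$), so the two computations are term-for-term equivalent.
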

Note that $ C_{M_1}$ and $C_{M_2}$ contain factors of $\epsilon_0$ and $ \epsilon_b$, and therefore every term on the right hand side of \eqref{eq:ProjectionMatrixBound} contains at least one factor of either $\epsilon_0$ or $ \epsilon_b$, both of which go to $0$ as $ L_+ \to +\infty$.

\begin{Remark}\label{rem:gamma}
A key issue in estimating $\|\pi_{\Gamma_1} - \pi_{\Gamma_2}\|$ will be controlling the exponentially growing factors present in any terms involving $\Gamma$. One might naively think that $\|\Gamma\| \sim e^{\nu_1^+ x}$, and so $\|(\Gamma^T\Gamma)^{-1} \| \sim e^{-2\nu_1^+ x}$. However, this is not necessarily the case. To see this, consider the example for $n=3$ given by
\[
\Gamma = \begin{pmatrix} e^{\nu_1^+ x} & e^{\nu_1^+ x} \\ 0 & e^{\nu_2^+ x} \\ 0 & 0 \end{pmatrix} \sim e^{\nu_1^+ x}.
\]
One can explicitly compute
\[
(\Gamma^T \Gamma)^{-1} = \frac{1}{ e^{2\nu_1^+ x+ 2\nu_2^+ x}} \begin{pmatrix} e^{2\nu_1^+ x} +  e^{2\nu_2^+ x} & -e^{2\nu_1^+ x} \\ - e^{2\nu_1^+ x} & e^{2\nu_1^+ x} +  e^{2\nu_2^+ x} \end{pmatrix} \sim e^{-2\nu_2^+ x}
\]
Therefore, a naive estimate would predict $\Gamma(\Gamma^T \Gamma)^{-1} \Gamma^T  \sim e^{\nu_1^+ x}e^{-2\nu_2^+ x}e^{\nu_1^+ x}$, which grows exponentially in $x$ and hence is insufficient. However, a direct calculation shows that
\[
\Gamma(\Gamma^T \Gamma)^{-1} \Gamma^T = \begin{pmatrix} 1 & 0 & 0 \\ 0 & 1 & 0 \\ 0 & 0 & 0 \end{pmatrix}
\]
This makes sense, since $\pi_\Gamma = \Gamma(\Gamma^T \Gamma)^{-1} \Gamma^T$ is a projection matrix, so its norm should not grow (or decay) exponentially in $x$. In fact, $\|\pi_\Gamma\| = 1$, as is the case for any (nontrivial) projection. This means, however, that in estimating $\|\pi_{\Gamma_1} - \pi_{\Gamma_2}\|$, we must not lose too much information about the structure of $\Gamma_{1,2}$.  
\end{Remark}

\begin{Proof}{\bf (of Theorem \ref{thm:main})}
Suppose that there exists an $x \geq L_+$ such that $\mathbb{E}^u_-(x) \cap \mathcal{D} \neq \{0\}$. Then \eqref{E:first-condition},  \eqref{E:second-condition}, and the above propositions imply
\[
|\alpha| = |\pi_{\Gamma_2}\alpha| = |\pi_{\Gamma_2} \alpha - \pi_{\Gamma_1} \alpha| < |\alpha|,
\]
which is a contradiction. Therefore, we obtain Theorem \ref{thm:main} as an immediate corollary.
\end{Proof}

\begin{Proof}{\bf (of Propositions \ref{prop:main} and \ref{prop:main2})}
Recall that, if $V$ is a frame matrix for a given $k$-dimensional subspace of $\mathbb{R}^n$, then
\[
\pi_V = V(V^TV)^{-1}V^T
\] 
is the corresponding projection matrix. The fact that $\mathrm{rank} \mbox{ }V = k$ ensures that $V^TV$ is invertible. 

The idea in bounding $ \|\pi_{\Gamma_1} - \pi_{\Gamma_2}\|$ will be to combine as many $\Gamma$'s as possible into terms of the form $\Gamma(\Gamma^T \Gamma)^{-1} \Gamma^T$, which is a projection and hence has unit norm. We begin by collecting some bounds that will be used in estimating each term. 

Using this formula and equations \eqref{E:Gamma1-frame} and \eqref{E:Gamma2-frame}, a direct calculation shows that
\begin{align*}
	\pi_{\Gamma_1} &= 
	\Gamma ( \Gamma^T \Gamma)^{-1} \Gamma + 
	Q \Gamma ( \Gamma^T \Gamma)^{-1} \Gamma_1^T \\
   & \qquad + 
	\Gamma ( \Gamma^T \Gamma)^{-1} (Q\Gamma)^T + 
	\Gamma_1 \left( \sum_{k=1}^\infty (-1)^k (M_1)^k \right) (\Gamma ^T \Gamma)^{-1}  \Gamma_1^T \\
	\pi_{\Gamma_2} &= 
	\Gamma ( \Gamma^T \Gamma)^{-1} \Gamma + 
	(P \Gamma +B)( \Gamma^T \Gamma)^{-1} \Gamma_2^T \\
	& \qquad + 
	\Gamma ( \Gamma^T \Gamma)^{-1} (P\Gamma + B)^T + 
	\Gamma_2 \left( \sum_{k=1}^\infty (-1)^k (M_2)^k \right) (\Gamma ^T \Gamma)^{-1}  \Gamma_2^T 
\end{align*}
where we have defined
\begin{eqnarray}
M_1 &=& (\Gamma^T\Gamma)^{-1}[\Gamma^TQ\Gamma + (Q\Gamma)^T\Gamma + (Q\Gamma)^T(Q\Gamma)]   \label{E:def-Ms}\\ 
M_2 &=& (\Gamma^T\Gamma)^{-1}[\Gamma^T(P\Gamma + B) + (P\Gamma+B)^T\Gamma + (P\Gamma+B)^T(P\Gamma+B)] 
\nonumber
\end{eqnarray}
so that 
\begin{align*}
\Gamma_1^T \Gamma_1  & = (	\Gamma^T \Gamma) ( I  + M_1) , 
&
\Gamma_2^T \Gamma_2  & = (	\Gamma^T \Gamma) ( I  + M_2).  
\end{align*}
By taking a difference and gathering terms, we obtain the following:  
\begin{align}
\pi_{\Gamma_1} - \pi_{\Gamma_2}  &= \underbrace{(Q-P)  \pi_{\Gamma} + Q \pi_{\Gamma} Q^T + P \pi_{\Gamma} P^T + \pi_{\Gamma} (Q-P)^T}_{I} \nonumber \\
& \qquad 	- \underbrace{B (\Gamma^T \Gamma)^{-1}  B^T - B (\Gamma^T \Gamma)^{-1} \Gamma^T ( I + P)^T - ( I + P) \Gamma (\Gamma^T \Gamma)^{-1}  B^T}_{II} \nonumber \\
& \qquad + \underbrace{\Gamma_1 \left( \sum_{k=1}^\infty (-1)^k (M_1)^k \right) (\Gamma ^T \Gamma)^{-1}  \Gamma_1^T - \Gamma_2 \left( \sum_{k=1}^\infty (-1)^k (M_2)^k \right) (\Gamma ^T \Gamma)^{-1}  \Gamma_2^T}_{III}  \label{eq:ProjectionDifference}
\end{align}

We bound the norm of the pieces I, II, and III of \eqref{eq:ProjectionDifference} in turn. 

\begin{Remark}\label{rem:scaling-factor}
	The $\gamma$'s and $\beta$'s are not really unique; any scalar multiple of them also works. Our estimate should take this into account. In other words, consider the basis solution $\tilde U_k = c_k U_k$, which would be equivalent to instead taking $c_k \gamma^k$ and $c_k \beta^k$. If we define
	\[
	C = \mathrm{diag}(c_1, \dots, c_{n-1}) \in \mathbb{R}^{(n-1)\times (n-1)}, 
	\]
	this would be equivalent to replacing $\Gamma$ and $B$ with $\Gamma C$ and $B C$, respectively. Inspecting \eqref{E:def-Ms} and \eqref{eq:ProjectionDifference}, each term involves $(\Gamma^T\Gamma)^{-1}$ and either two factors of $\Gamma$ or one factor of $\Gamma$ and one factor of $B$. Thus, it seems plausible the constant matrix $C$ would not have an overall effect on the estimate. This will be confirmed below.
\end{Remark}

\textbf{Bounds for I:}   \newline 
We first show that if $\epsilon_0  \sqrt{2n\nu_1^+ d_{max}} < 1$ then:
\begin{align}
\| Q \| & \leq \epsilon_0 C_Q,
&
\| P \| & \leq \epsilon_0 C_P .
\end{align}
Given any matrix $M$, one has
\begin{equation}\label{E:norm-characterization}
\|M\| = \max_{\lambda \in \sigma(M^TM)}\sqrt{|\lambda|}
\end{equation}
To estimate $P$ and $Q$, first note that \eqref{E:W-error} implies that $\epsilon_0$ is an upper bound on every component of the $ n \times n$ matrices $\|\mathcal{W}^{s,u}_{1,2}(x)^T \mathcal{W}^{s,u}_{1,2}(x)\|$, and thereby  
\begin{equation}\label{E:Wsu-bound}
\|\mathcal{W}^{s,u}_{1,2}(x)\| \leq  \epsilon_0 \sqrt{n}.
\end{equation}
In addition, \eqref{E:explicitVs} and the fact that $\|\mathcal{Z}\| = 1$ leads to
\begin{equation}\label{E:Vbounds}
\|\mathcal{V}^{s,u}_1\| \leq \frac{1}{\sqrt{2\nu_n^+ d_{min}}}, \qquad \|\mathcal{V}^{s,u}_2\| \leq \sqrt{\frac{\nu_1^+ d_{max}}{2}}. 
\end{equation}

Using the formula for $Q$ in \eqref{E:Gamma1-frame}, as well as \eqref{E:Wsu-bound}, and \eqref{E:Vbounds}, we find
\begin{equation}\label{E:Qbound}
\|Q\| \leq  \epsilon_0  \left( \sqrt{\frac{2 n }{\nu_n^+ d_{min}}} + \sqrt{2 n \nu_1^+ d_{max}} + 2 \epsilon_0 n\right) = \epsilon_0 C_Q.
\end{equation}
Using the formula for $P$ in \eqref{E:Gamma2-frame}, and again \eqref{E:Wsu-bound}, and \eqref{E:Vbounds}, we find 
\begin{equation}\label{E:Pbound}
\|P\| \leq \frac{2\epsilon_0  \sqrt{2n\nu_1^+ d_{max}}}{1 - \epsilon_0  \sqrt{2n\nu_1^+ d_{max}}} = \epsilon_0 C_P,
\end{equation}
where in the above we have assumed that $\epsilon_0$ is sufficiently small so that $\epsilon_0  \sqrt{2n\nu_1^+ d_{max}} < 1$.

Hence we obtain:
\begin{align}
\| 	(Q-P)  \pi_{\Gamma} + Q \pi_{\Gamma} Q^T + P \pi_{\Gamma} P^T + \pi_{\Gamma} (Q-P)^T \| & \leq 2 \epsilon_0 ( C_Q + C_P) + \epsilon_0^2 ( C_Q^2 + C_P^2) \label{eq:MainPart1}
\end{align}

\textbf{Bounds for II:}  \newline 
We first show that for $\epsilon_b$ defined as in our proposition, we have:
\begin{align*}
\|\Gamma(\Gamma^T\Gamma)^{-1}B^T\| &\leq \epsilon_b ,
&
\|B(\Gamma^T\Gamma)^{-1}B^T\| &\leq \epsilon_b^2 .
\end{align*}
Consider the term $\Gamma(\Gamma^T\Gamma)^{-1}B^T$. 
Notice that
\begin{equation}\label{E:sym-matrix}
[\Gamma(\Gamma^T\Gamma)^{-1}B^T]^T\Gamma(\Gamma^T\Gamma)^{-1}B^T = B (\Gamma^T\Gamma)^{-1} B^T.
\end{equation}
Furthermore, using the fact that $\Gamma = \Lambda \tilde \Gamma$ and $B = \Lambda^{-1}\tilde B$, we therefore need to understand the norm of
\begin{equation}\label{E:see-invariance}
B (\Gamma^T\Gamma)^{-1} B^T = \Lambda^{-1} \tilde B (\tilde \Gamma^T \Lambda^2 \tilde \Gamma)^{-1} \tilde B^T \Lambda^{-1}.
\end{equation}

We first focus on the matrix $\tilde \Gamma^T \Lambda^2 \tilde \Gamma$, which is real and symmetric. Hence, if $\{\lambda_j\}$ are its eigenvalues, then 
$\| (\tilde \Gamma^T \Lambda^2 \tilde \Gamma)^{-1}\| \leq \lambda_{min}^{-1}$. If $u$ is a unit eigenvector with eigenvalue $\lambda$, then 
\[
\lambda = \langle u, \lambda u\rangle = \langle u, \tilde \Gamma^T \Lambda^2 \tilde \Gamma u\rangle= \langle \tilde \Gamma u,  \Lambda^2 \tilde \Gamma u\rangle \geq e^{2\nu_n^+ x} \langle \tilde \Gamma u, \tilde \Gamma u\rangle  = e^{2\nu_n^+ x} \langle  u, \tilde \Gamma ^T \tilde \Gamma u\rangle.
\]
The matrix $\tilde \Gamma ^T \tilde \Gamma$ is again real and symmetric (and of full rank). Moreover, its eigenvalues are positive, because if $\mu$ is an eigenvalue with unit eigenvector $q$, then $\mu = \langle q, \tilde \Gamma ^T \tilde \Gamma q \rangle = |\tilde \Gamma q|^2$. Hence, 
\[
\lambda_{min} \geq e^{2\nu_n^+ x} \min\{ \mu: \mu \in \sigma(\tilde \Gamma^T \tilde \Gamma)\}.
\]
Denoting
\[
\mu^* = \min\{ \mu: \mu \in \sigma(\tilde \Gamma^T \tilde \Gamma)\},
\]
we find
\[
\| (\tilde \Gamma^T \Lambda^2 \tilde \Gamma)^{-1}\| \leq \frac{1}{\mu^*}e^{-2\nu_n^+ x}.
\]
Since $\|\Lambda^{-1}\| \leq e^{-\nu_n^+ x}$, we therefore find
\[
\|B(\Gamma^T\Gamma)^{-1}B^T\| \leq \frac{1}{\mu^*}e^{-4\nu_n^+ x} \|\tilde B\|^2 =\epsilon_b^2.
\]
Since $B(\Gamma^T\Gamma)^{-1}B^T$ is real and symmetric, this bound also provides a bound on its eigenvalues. As a result, 
\eqref{E:norm-characterization} and \eqref{E:sym-matrix} imply
\begin{equation}\label{E:second-bound}
\|\Gamma(\Gamma^T\Gamma)^{-1}B^T\|  \leq \frac{1}{\sqrt{\mu^*}}e^{-2\nu_n^+ L_+} \|\tilde B\| = \epsilon_b, \qquad \mu^* = \min\{ \mu: \mu \in \sigma(\tilde \Gamma^T \tilde \Gamma)\}.
\end{equation}

Hence we can obtain the estimate:
\begin{align}
\| 
 B (\Gamma^T \Gamma)^{-1}  B^T
+ B (\Gamma^T \Gamma)^{-1} \Gamma^T ( I + P)^T
+ ( I + P) \Gamma (\Gamma^T \Gamma)^{-1}  B^T \| & \leq  \epsilon_b^2 + 
2 \epsilon_b ( 1 + \epsilon_0 C_P)  \label{eq:MainPart2}
\end{align}

\begin{Remark}
	Recall Remark \ref{rem:scaling-factor}. If in \eqref{E:see-invariance} we replace $\tilde \Gamma$ and $\tilde B$ by $\tilde \Gamma C$ and $\tilde B C$, respectively, for some invertible diagonal matrix $C$, then we find
	\[
	\Lambda^{-1} \tilde B C (C \tilde \Gamma^T \Lambda^2 \tilde \Gamma C)^{-1} \tilde C B^T \Lambda^{-1} =  \Lambda^{-1} \tilde B (\tilde \Gamma^T \Lambda^2 \tilde \Gamma)^{-1} \tilde B^T \Lambda^{-1},
	\]
	and so the estimate does not depend on the matrix $C$. 
\end{Remark}

\begin{Remark}
	The quantity $\mu^* = \min\{ \mu: \mu \in \sigma(\tilde \Gamma^T \tilde \Gamma)\}$ appearing in \eqref{E:second-bound} will in practice be computed numerically. Although we know for theoretical reasons that $\mu^* > 0$, one must be careful that any (controllable) numerical error does not cause a violation of this condition. 
\end{Remark}

\textbf{Bounds for III:} \newline 
Keeping Remark \ref{rem:gamma} in mind, we simplify the expressions for $M_{1,2}$ and III as follows. First, \eqref{E:def-Ms} can be written as:
\begin{align}
	M_1 &= (\Gamma^T\Gamma)^{-1}\Gamma^T  \tilde M_1 \Gamma  
	&
	M_2 &= (\Gamma^T\Gamma)^{-1}\Gamma^T \tilde M_2 \Gamma  \label{E:def-Ms-better} 
\end{align}
where we define $\tilde M_1 := Q + Q^T + Q^T Q  $ and 
\begin{align*}
\tilde M_2 &:= P + P^T + P^TP \\&\qquad + (1+P^T)B(\Gamma^T\Gamma)^{-1}\Gamma^T + \Gamma(\Gamma^T\Gamma)^{-1}B^T (1+P) + \Gamma (\Gamma^T\Gamma)^{-1}B^TB(\Gamma^T\Gamma)^{-1}\Gamma^T. \nonumber
\end{align*}
A straightforward calculation shows that 
$C_{M_1} \geq \|\tilde M_1\| $ and $C_{M_2} \geq \|\tilde M_2\| $. 
 Using \eqref{E:def-Ms-better}, notice that
\[
M_1^k (\Gamma^T \Gamma)^{-1}\Gamma^T = (\Gamma^T \Gamma)^{-1}\Gamma^T[\tilde{M}_1 \pi_\Gamma]^k.
\]
This can be shown, for example, via an induction argument.
As a result, 
\[
\Gamma \left( \sum_{k=1}^\infty (-1)^kM_1^k \right) (\Gamma^T \Gamma)^{-1} \Gamma^T = \pi_\Gamma \sum_{k=1}^\infty (-1)^k [\tilde{M}_1 \pi_\Gamma]^k,
\]
and so
\begin{equation*}
\left\|\Gamma \left( \sum_{k=1}^\infty (-1)^kM_1^k \right) (\Gamma^T \Gamma)^{-1} \Gamma^T \right\| 
\leq 
\frac{C_{M_1}}{1-C_{M_1}}. 
\end{equation*}
Similarly, we obtain the following estimate 
\begin{equation*}
\left\|\Gamma \left( \sum_{k=1}^\infty (-1)^kM_2^k \right) (\Gamma^T \Gamma)^{-1} \Gamma^T \right\| \leq \frac{ C_{M_2}}{1-C_{M_2}}.
\end{equation*}
By factoring out $ \Gamma_1 = (I + Q) \Gamma$, we can go about bounding our infinite sums as follows:
\begin{align}
\big\|	\Gamma_1 \left( \sum_{k=1}^\infty (-1)^k (M_1)^k \right) (\Gamma ^T \Gamma)^{-1}  \Gamma_1^T \big\|
& =  (1+\epsilon_0 C_Q)^2 \frac{C_{M_1}}{1-C_{M_1}}. \label{eq:MainPart3a}
\end{align}
Similarly, by writing $\Gamma_2 (\Gamma^T\Gamma)^{-1}  = 
( I + P +  B (\Gamma^T \Gamma)^{-1} \Gamma^T)  \Gamma (\Gamma^T\Gamma)^{-1} $  we obtain: 
\begin{align}
\big\|	\Gamma_2 \left( \sum_{k=1}^\infty (-1)^k (M_2)^k \right) (\Gamma ^T \Gamma)^{-1}  \Gamma_2^T \big\|
& =  (1+\epsilon_0 C_P + \epsilon_b)^2 \frac{C_{M_2}}{1-C_{M_2}}. \label{eq:MainPart3b}
\end{align}

Thus, by plugging into \eqref{eq:ProjectionDifference} the estimates from \eqref{eq:MainPart1}, \eqref{eq:MainPart2}, \eqref{eq:MainPart3a} and \eqref{eq:MainPart3b}, we obtain \eqref{eq:ProjectionMatrixBound}. 
By choosing $L_+$ large we can control the size of $\epsilon_0$ and $e^{-2\nu_n^+ x}$, and hence also $C_{M_2}$, which means we can choose $L_+$ so the right hand side of \eqref{eq:ProjectionDifference} is strictly less than $1$. This completes the proof.
\end{Proof}


\section{Methodology for Computing Conjugate Points}
\label{sec:Methodology}

Here we demonstrate how to rigorously count conjugate points in practice. Again, we restrict our interest to PDEs of the form in \eqref{E:gradrd}, repeated below: 
\begin{equation} \label{eq:GradPDE}
v_t = Dv_{xx} + \nabla G(v), \qquad G \in C^2(\mathbb{R}^n, \mathbb{R}).
\end{equation} 
Moreover, we are interested in computing the spectral stability of an equilibrium solution $ \varphi: \R\to \R^n$ such that its linearization \eqref{E:eval} satisfies Hypotheses \ref{H:potential} and \ref{H:simple}. To do so, we perform the following steps: 
\begin{enumerate}
\item  For two points $v_0 , v_1 \in \R^{n}$ such that $ 0=  \nabla G(v_0)  = \nabla G(v_1)$ and $G(v_0) =  G(v_1)$, compute (and prove the existence of) a standing wave $\varphi : \R\to \R^n$  such that  $ \lim_{x \to - \infty} \varphi(x) = v_0$ and $\lim_{x \to +\infty}\varphi(x) = v_1$. 
\item Fix $L_-\geq 0$ and prove that $\bE_{-}^{u}(x) \cap \cD = \{0\}$ for all $ x \leq -L_-$.  
\item Fix $L_+\geq0$ and calculate a frame matrix $ U:[-L_-,L_+] \to \R^{2n\times n}$ whose columns span $ \bE_-^u(x)$ for all $ x \in [-L_-,L_+]$.  
\item Prove that  $\bE_{-}^{u}(x) \cap \cD = \{0\}$ for all $ x \geq L_+$.   
\item Count the conjugate points on $ [ - L_- , L_+]$. 
\end{enumerate}
By Theorem \ref{thm:prev-thm} the number of unstable eigenvalues of $ \mathcal{L}$ in \eqref{E:eval} is equal to the number of conjugate points $ x \in \R$  where $ \bE_{-}^u(x) \cap \cD \neq \{0\}$. By Lemma \ref{lem:conj-det},  $x \in \R$ is a conjugate point if and only if $ \det A_1(x) =0$. Using this approach, we reduce this search for conjugate points down to a finite interval $ [-L_- , L_+]$. We note  that in practice we will fix $L_+ =0$ and leave $L_-$ free to be chosen. We note also that our computational methodology may also be adapted for use with standard/non-validated numerics. For this case, in short, one may skip steps (ii) and (iv).

Being a numerical algorithm several computational parameters, such as $L_-$, must be fixed. To obtain a more theoretical flavor, one could rephrase our validation theorems with phases such as ``\emph{if $L_- >0$ is sufficiently large, then ...} ''. However as necessitated in constructing a computer assisted proof, we will fix all of our computational parameters, and then after checking explicitly verifiable conditions, determine whether the hypothesis of each validation theorem is satisfied. If any one of the validation criteria prove false, then the entire computer assisted proof is unsuccessful. In this case, we may try to prove the theorem again with a different selection of computational parameters. 

By performing this calculation using validated  numerics, we are able to obtain a mathematical proof of the existence of $\varphi$ and calculate its spectral stability. This methodology is based on interval arithmetic, a fundamental  tool for producing computer assisted proofs in analysis. To briefly overview (see also \cite{rokne2001interval,gomez2019computer}), let us define the set of real intervals $ \mathbb{IR}$ as 
\[
\mathbb{IR} := \{ [a,b] \subseteq \R : -\infty < a \leq b < \infty \}.
\] 
In this manner, a rigorous enclosure of $\sqrt{2}$, for example, can be given in terms of an upper and lower bound. One may define arithmetic operation $ * \in \{ +,-, \cdot , / \}$ on  intervals $ A , B \in \mathbb{IR}$ by $ A * B = \{ \alpha * \beta : \alpha \in A, \beta \in B \}$. These operations are explicitly computable with the exception, of course, of division when $ 0 \in B$. More generally, if $ f : \R^n \to \R^m$ is a continuous function, then we say that $ F: \mathbb{IR}^n \to \mathbb{IR}^m$ is an \emph{interval extension} of $f$ if $f(x) \in F(X)$ for all $ X \in \mathbb{IR}^{n}$ and all $ x \in X$. 

While interval arithmetic provides a computational methodology for performing set arithmetic, some amount of precision is lost. It should be noted that subtraction (division) is not the inverse operation of addition (multiplication);  $[a,b] - [a,b]= [a-b,b-a] \neq [0,0]$  unless $ a=b$. Furthermore the interval extension of a function need not be unique nor sharp. For example the function defined as $ A \mapsto [-1,1]$ for $A \in \mathbb{IR}$ is an interval extension of the sine function. Nevertheless, the forward propagation of error behaves predictably and the final result is guaranteed to contain the correct answer. More precisely, if $A,\tilde{A},B, \tilde{B}  \in \mathbb{IR}$ with $ A \subseteq \tilde{A}$ and $ B \subseteq \tilde{B}$, then $A*B \subseteq \tilde{A}* \tilde{B}$  for $* \in \{ +,-, \cdot , / \}$. This property is sometimes called the inclusion isotony principle.  

On a computer using floating point numbers, one is only able to precisely represent the rational numbers with a finite binary expansion. 
Furthermore, when the sum of two floating point numbers is computed, the sum is necessarily  rounded to an adjacent floating point number. To account for this, when interval arithmetic is implemented on a computer, outward rounding is used in order to ensure that whatever interval arithmetic operation performed will contain the true solution. By controlling \emph{a posteriori} error estimates, interval enclosures may be computed for elementary functions, matrix operations, and even implicitly defined functions, such as the solution to differential equations. 
 
 
\subsection{Computing a standing wave} \label{sec:ComputeStandingWave}

To calculate a standing wave to \eqref{eq:GradPDE}, we solve the equivalent problem of calculating a heteroclinic orbit in the spatial dynamical system, a Hamiltonian ODE. We define the generalized momentum variable $ w = D v_x \in \R^n$ and consider the following Hamiltonian system: 
\begin{align}
H(v,w) &= \tfrac{1}{2} \|D^{-1} w\|^2 +  G(v).  \\
(v,w)' &= - J \nabla H(v,w) 
\label{eq:HamiltonianODE}
\end{align}
where $J$ denotes the symplectic matrix defined in \eqref{E:eval-sys}. Hence, a standing wave $ \varphi$ to \eqref{eq:GradPDE} corresponds to a heteroclinic orbit $\phi = (\varphi,D \varphi_x)$ to \eqref{eq:HamiltonianODE}.  

Assume that  $p_0 , p_1 \in \R^{2n}$ are equilibria to \eqref{eq:HamiltonianODE}  with equal energy $H(p_0) = H(p_1)$ and without loss of generality, assume that $H(p_0) = 0$. From our assumption (H2) we may assume that the linearizations about $p_0$ and $p_1$ are hyperbolic, both having $n$-stable and $n$-unstable eigenvalues, all distinct. A heteroclinic orbit between these points is an intersection between $W^u(p_0)$, the $n$-dimensional unstable manifold of $p_0$, and $W^s(p_1)$, the $n$-dimensional stable manifold of $p_1$. 
Generically, two $n$ dimensional manifolds embedded in $ \R^{2n}$ will have a trivial intersection. However the assumption $H(p_0) = H(p_1)$ forces both manifolds to live within the same codimension-1 energy surface, whereby they will generically intersect along a 1-dimensional submanifold. 

We compute the heteroclinic orbit by solving a boundary value problem between  $W^u(p_0)$ and $W^s(p_1)$, using validated numerics to compute the (un)stable manifolds and the solution map of the flow. This subject has been well studied in the validated numerics literature, see for example the works  \cite{oishi1998numerical,wilczak2003heteroclinic,van2015stationary,ArioliKoch15,van2018continuation,van2020validated} and the the references cited therein.  We develop bounds on the  (un)stable manifold of a hyperbolic fixed point  using the approach taken in \cite{capinski2017beyond,CapinskiWasieczko-Zajpolhk-ac17,capinski2018computer}. 
As the estimates for the (un)stable manifolds are used in \eqref{eq:Eta_M_C_M} to compute the constants $\eta_M$ and $C_M$ from (H3), we  present the unstable manifold theorem from \cite{capinski2018computer} in full detail. Since this ODE is time reversible, the same estimates follow for bounding the stable manifold of $ p_1$.

To begin our analysis, we perform a linear change of variables to align our coordinate axes with the (un)stable eigenvectors of the linearized flow about $p_0$. In the context of computer assisted proofs, where floating point arithmetic is not perfect, we do not require our change of variables to be perfect. To that end, fix an invertible matrix $ A_0 \in \R^{2n\times 2n}$ whose columns are  (approximate) eigenvectors for the Jacobian  of $- \, J \nabla H(p_0)$. We choose the first $n$ columns of $A_0$ to correspond to the unstable eigenvectors, and the second $n$ columns of $A_0$ to correspond to the stable eigenvectors. We define a change of coordinates 
$ \psi(z) := p_0 + A_0 z$, where $\psi(z) = (v, w)^T$, and correspondingly define  a conjugate dynamical system by: 
\begin{align}
	f (z) &= - A_0^{-1} J \nabla H ( \psi(z)) \\
	z' &= f(z) .\label{eq:ConjugateODE}
\end{align}
If the columns of $A_0$ are chosen to be the exact eigenvectors, then the unstable and stable  eigenvectors of  the linearization of \eqref{eq:ConjugateODE} at $0$ are given by basis vectors $ \{ e_i\}_{i=1}^n$ and $ \{ e_i\}_{i=n+1}^{2n}$ respectively. 

Locally, the (un)stable manifolds of the origin can be well approximated as a graph over its (un)stable eigenspaces. Here we follow the approach taken in \cite{capinski2017beyond} to approximate these invariant manifolds, identify a neighborhood of validity, and provide rigorous error estimates using  validated numerics. As precisely detailed in Theorem \ref{prop:UnstableManifold_Summary}, if certain \emph{a posteriori} conditions are satisfied (depending on fixed constants $ r_u,r_s >0$) then the unstable manifold to \eqref{eq:ConjugateODE} is contained within the set $ S = B_u  \times B_s $,   where 
\begin{align}\label{eq:BallDef}
B_u  &= \left\{ (x,0) \in \R^{2n} : |x| \leq r_u \right\} ,
&
B_s  &= \left\{ (0,y) \in \R^{2n} : |y| \leq r_s \right\} .
\end{align} 
These \emph{a posteriori} conditions are satisfied if $(i)$  the flow exits $S$ through  the set $ \partial B_u \times B_s$; $(ii)$  the flow enters $S$ through  the set $ B_u \times \partial B_s$; and $(iii)$  the flow stays inside a cone of angle $\LL = r_s / r_u$. See Figure \ref{fig:UnstableManifold} for a pictorial representation of these conditions.  

\begin{figure}
	\centering
	\includegraphics[width=0.9\linewidth]{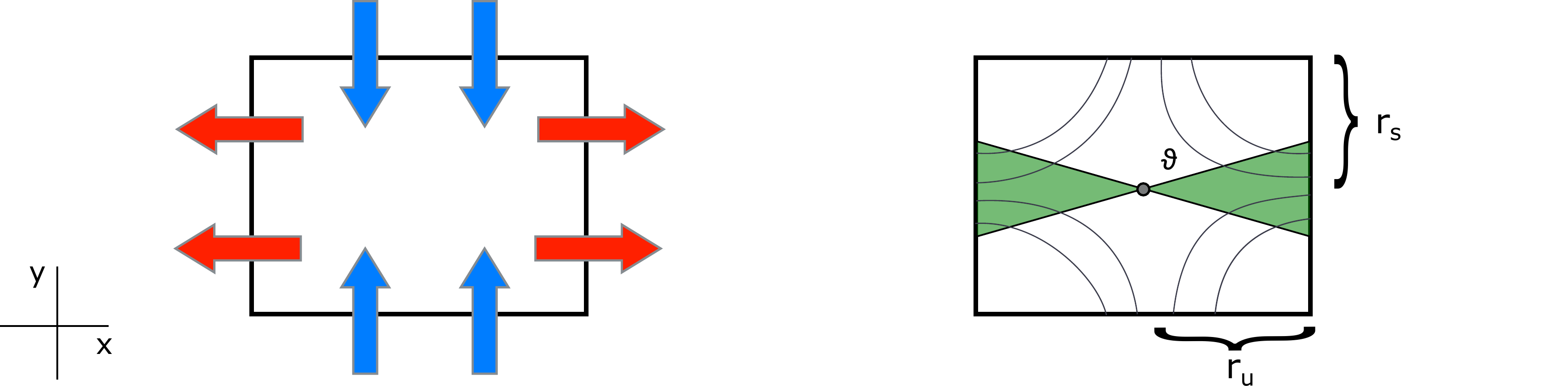}
	\caption{Conditions to check when using a computer to prove an unstable manifold theorem. The top and bottom sides of the box correspond to $B_u \times \partial B_s$ and the left and right sides correspond to $ \partial B_u \times B_s$.}
	\label{fig:UnstableManifold}
\end{figure}

This last condition is checked using logarithmic norms \eqref{eq:DefLogNorm} and logarithmic minimums \eqref{eq:DefLogMin} which, for the Euclidean  norm on $\R^N$, is defined for square matrices $ A\in \mathbb{R}^{N\times N}$ as follows:
\begin{align} \label{eq:DefLogNorm}
l(A) &:= \max \left\{ 
\lambda \in \mbox{ spectrum of } (A+A^T)/2  
\right\}, \\
\label{eq:DefLogMin}
m_l(A) &:= \min \left\{ 
\lambda \in \mbox{ spectrum of } (A+A^T)/2  
\right\} .
\end{align}
The logarithmic norm $ l(A)$ and logarithmic minimum $ m_l(A)$ may be positive or negative, and provide upper and lower bounds on the linear flow defined by a matrix $A$, that is  $e^{m_l(A)t} \leq \| e^{At} \| \leq e^{l(A) t } $. For further details, we refer the reader to \cite{hairer1987solving,capinski2017beyond} and the references therein. 

For a set $ S \subseteq \R^{2n}$  we define interval matrices 
 $	  	 \left[ \pi_x \frac{\partial f}{\partial x} (S) \right], \left[  \pi_x \frac{\partial f}{\partial y} (S) \right] \in \mathbb{IR}^{n\times n} $ by 
\begin{align*}
	\left[  \pi_x \tfrac{\partial f}{\partial x} (S) \right] &:= 
	\left\{
	A = (a_{i,j}) \in \mathbb{IR}^{n\times n}
	:
	a_{i,j}  = 
	\left[
	\inf_{z \in S} 
	\pi_{x_i} \tfrac{\partial f}{\partial x_j} (z) ,   	 \sup_{z \in S} 
	\pi_{x_i} \tfrac{\partial f}{\partial x_j} (z) 
	\right]
	\right\} \\
	\left[  \pi_x \tfrac{\partial f}{\partial y} (S) \right] &:= 
	\left\{
	A = (a_{i,j}) \in \mathbb{IR}^{n\times n} 
	:
	a_{i,j}  = 
	\left[
	\inf_{z \in S}
	\pi_{x_i}  \tfrac{\partial f}{\partial y_j} (z) ,   	
	\sup_{z \in S} 
	\pi_{x_i} \tfrac{\partial f}{\partial y_j} (z) 
	\right]
	\right\} .
\end{align*}
Here, for a point $z = (x,y) \in \R^{2n}$ the maps $z \mapsto 	\pi_{x_i}(z)$ and $z \mapsto 	\pi_{y_i} (z)$ denote the projections onto the $i$\textsuperscript{th} components of $x$ and $y$ respectively.  
These sets $	\left[  \pi_x \tfrac{\partial f}{\partial x} (S) \right]$ and $ 	\left[  \pi_x \tfrac{\partial f}{\partial y} (S) \right] $are essentially a rectangular, convex closure of the $ \pi_x \frac{\partial f}{\partial x}$ and $\pi_x \tfrac{\partial f}{\partial y} $~images of a set $S$. 
We now present the unstable manifold theorem from \cite{capinski2018computer}. 
\begin{Theorem}[{\cite[Theorem 12]{capinski2018computer}}] \label{prop:UnstableManifold_Summary}
 Suppose $p_0 \in \R^{2n}$ is an equilibrium of \eqref{eq:HamiltonianODE}. 
 Fix $r_u , \LL>0$ and $ r_s = \LL r_u$. 
 Define the sets $B_u $ and $B_s$ as in \eqref{eq:BallDef} 
and  $ S  = B_u \times B_s \subseteq \R^{2n} $. 
Writing     $(x,y) \in B_u \times B_s$, we define rate constants $ \mu , \xi \in \R$  as follows 
 \begin{align}
\mu &= \sup_{z \in S} \left\{
l\left(  
\pi_y \tfrac{\partial f}{\partial y}  (z) 
\right)
+ \frac{1}{\LL} 
\left\| 
\pi_y
\tfrac{\partial f}{\partial x}  (z) 
\right\|
\right\},\\
\label{eq:Def_xi}
\xi &=   
\inf_{ A \in  \left[ \pi_x \frac{\partial f}{\partial x} (S) \right] } m_l\left(A
\right)
- 
\sup_{ A' \in  \left[\pi_x \frac{\partial f}{\partial y} (S) \right] } 
\LL  
\left\| A' 
\right\| ,
\end{align}
 
Suppose the following conditions are satisfied:
 	\begin{enumerate}
 	\item For any $q \in \partial B_u \times B_s$ we have $\left< \pi_x f(q) , \pi_x q \right> >0$; 
 	\item For any $q \in  B_u \times \partial B_s$ we have $\left< \pi_y f(q) , \pi_y q \right> <0$;  
 	\item $\mu < 0 < \xi $. 
 \end{enumerate}
Then there exists a unique $C^1$-function $ \omega^u : B_u \to B_s$ such that $\| \partial_x \, \omega^u \| \leq L$, and 
 \[
 W^u_{loc}(0) = \left\{
 (x,\omega^u(x))  : x \in B_u
 \right\}
 \]
 is the local unstable manifold of $0$ for the differential equation \eqref{eq:ConjugateODE}. 
 Moreover the map $ P:B_u \to \R^{2n}$ defined by $ P(x) := \psi(x,\omega^u(x))$  
 is a chart for  $W^u_{loc}(p_0)$, the local unstable manifold of $p_0$ for the differential equation \eqref{eq:HamiltonianODE}.

\end{Theorem}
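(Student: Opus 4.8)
The plan is to realize the local unstable manifold of the conjugate system \eqref{eq:ConjugateODE} as $W^u:=\{z\in S:\Phi_t(z)\in S\text{ for all }t\le 0\}$, where $\Phi_t$ denotes the flow and $S=B_u\times B_s$, to prove that $W^u$ is a Lipschitz graph $\{(x,\omega^u(x)):x\in B_u\}$ by combining a topological no-retraction argument (for existence of a point in each vertical fiber) with the cone conditions encoded in $\mu<0<\xi$ (for uniqueness and regularity), and then to transport the result to \eqref{eq:HamiltonianODE} through the affine diffeomorphism $\psi$. The rate constants are arranged so that the argument never uses exactness of the linear change of variables $A_0$: approximate diagonalization only degrades $\mu,\xi$. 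As a preliminary, condition (i) makes the vector field point strictly outward, in the $x$-radial sense, on $\partial B_u\times B_s$, while (ii) makes it point strictly inward on $B_u\times\partial B_s$; hence the forward exit set of $S$ is exactly $\partial B_u\times B_s$, the entrance set is $B_u\times\partial B_s$, the two roles exchange under the backward flow, and---because the inequalities are strict---all boundary crossings are transversal, so $W^u$ is closed and the backward exit time from $S$ depends continuously on the initial point.

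Next I would establish the cone conditions. For a solution $z(t)=(x(t),y(t))$ of \eqref{eq:ConjugateODE} in $S$, and likewise for a difference $\delta(t)=z_1(t)-z_2(t)$ of two such solutions (using that the interval matrices $[\pi_x\tfrac{\partial f}{\partial x}(S)]$ and $[\pi_x\tfrac{\partial f}{\partial y}(S)]$ enclose the Jacobian of $f$ over all of $S$, which is convex and contains $0$), the logarithmic norm and minimum give
\begin{align*}
\tfrac{d}{dt}|x| &\ \ge\ \Big(\inf_{A\in[\pi_x\tfrac{\partial f}{\partial x}(S)]}m_l(A)\Big)|x|-\Big(\sup_{A'\in[\pi_x\tfrac{\partial f}{\partial y}(S)]}\|A'\|\Big)|y|,\\
\tfrac{d}{dt}|y| &\ \le\ \Big(\sup_{z\in S}l\big(\pi_y\tfrac{\partial f}{\partial y}(z)\big)\Big)|y|+\Big(\sup_{z\in S}\big\|\pi_y\tfrac{\partial f}{\partial x}(z)\big\|\Big)|x|.
\end{align*}
On the common boundary $|y|=\LL|x|$ of the horizontal cone $\mathcal{C}^h=\{|y|\le\LL|x|\}$ and the vertical cone $\mathcal{C}^v=\{|y|\ge\LL|x|\}$ these become $\tfrac{d}{dt}|x|\ge\xi|x|$ and $\tfrac{d}{dt}|y|\le\mu|y|$, whence $\tfrac{d}{dt}\big(|y|-\LL|x|\big)\le(\mu-\xi)\LL|x|<0$. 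Therefore $\mathcal{C}^h$ is forward invariant and $\mathcal{C}^v$ backward invariant, both for the flow and for differences; moreover inside $\mathcal{C}^v$ one has $\tfrac{d}{dt}|y|\le\mu|y|$, so $|\delta(t)|$ grows at least like $e^{|\mu|\,|t|}$ as $t\to-\infty$ along any backward segment staying in $\mathcal{C}^v$, while inside $\mathcal{C}^h$ one has $\tfrac{d}{dt}|x|\ge\xi|x|$.

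With these in hand, the graph property and the regularity follow. If $z_1,z_2\in W^u$ have the same $x$-component, then $\delta(0)=(0,y_1-y_2)\in\mathcal{C}^v$, so $\delta(t)\in\mathcal{C}^v$ for all $t\le0$ and $|\delta(t)|\to\infty$ as $t\to-\infty$, contradicting boundedness of $z_1,z_2$ in $S$; the same contradiction, applied to general $z_1,z_2\in W^u$, forces $\delta(0)\in\mathcal{C}^h$, i.e.\ $|y_1-y_2|\le\LL|x_1-x_2|$. Hence $W^u$ is the graph of a function $\omega^u$, Lipschitz with constant $\LL$ and with $\omega^u(0)=0$, on its domain. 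That the domain is all of $B_u$ follows from a no-retraction argument: for fixed $\bar x\in B_u$, if the backward orbit of $(\bar x,y)$ left $S$ for every $y\in B_s$, then sending $y$ to the radial direction of its backward exit point in $\partial B_s$ would be a continuous retraction of $B_s$ onto $\partial B_s$, which is impossible. The invariant cone field $\mathcal{C}^h$ for the variational equation then upgrades $\omega^u$ to $C^1$, with $\|\partial_x\omega^u\|\le\LL$ (so one may take $L=\LL$), by a standard argument (the graph transform is a contraction in the $C^1$ topology, or the fiber contraction theorem). On $W^u$ one has $|y|\le\LL|x|$, hence $\tfrac{d}{dt}|x|\ge\xi|x|$ and, for $t\le 0$, $|x(t)|\le e^{\xi t}|x(0)|\to0$ and $|y(t)|\le\LL|x(t)|\to0$ as $t\to-\infty$; conversely any point whose backward orbit tends to $0$ eventually lies in $S$, so $W^u=W^u_{loc}(0)$ for \eqref{eq:ConjugateODE}. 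Since $\psi$ conjugates \eqref{eq:ConjugateODE} to \eqref{eq:HamiltonianODE}, the map $P(x)=\psi(x,\omega^u(x))$ is then a $C^1$ chart onto $\psi\big(W^u_{loc}(0)\big)=W^u_{loc}(p_0)$.

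The main obstacle is the graph step. Making the no-retraction argument rigorous requires the transversality coming from the strict inequalities in (i)--(ii), so that the backward exit time is continuous, and the cone estimates must be extracted from interval enclosures of the Jacobian over the entire box $S$ rather than along individual orbits, since the coordinates are only approximately diagonalizing---this is precisely why $\mu<0<\xi$, and not the spectral gap of the exact linearization, is the right hypothesis. The $C^1$ upgrade, though classical, likewise depends on strict invariance of the cone field $\mathcal{C}^h$, again supplied by $\mu<0<\xi$.
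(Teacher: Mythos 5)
This theorem is cited verbatim from \cite[Theorem 12]{capinski2018computer}; the paper under review does not reprove it, so there is no in-paper argument to compare against beyond the cited source. Your sketch---forward/backward invariant cones derived from $\mu<0<\xi$ via logarithmic norms and interval Jacobian enclosures over the convex box $S$, existence of a point in each vertical fiber over $B_u$ by a no-retraction argument using the transversal entry/exit behaviour from (i)--(ii), the graph and $\LL$-Lipschitz property from the vertical-cone backward-growth contradiction, the $C^1$ upgrade by a graph-transform/fiber-contraction argument, and transport through the affine diffeomorphism $\psi$---is essentially the cone-condition method of that reference, and the boundary estimates $\tfrac{d}{dt}|x|\ge\xi|x|$ and $\tfrac{d}{dt}|y|\le\mu|y|$ on $|y|=\LL|x|$, together with $\tfrac{d}{dt}(|y|-\LL|x|)\le(\mu-\xi)\LL|x|<0$, check out.
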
 
The intuition for the three conditions above, is that we require: $(i)$  the flow exits through the ``unstable faces''  of the product $B_u \times B_s$;  $(ii)$ the flow enters $S$ through  the ``stable faces'' of the product $B_u \times B_s$; and $(iii)$  the flow stays inside a cone of angle $\LL = r_s / r_u$, analogous to the rate constants of Fenichel theory.   

The hypotheses of Theorem \ref{prop:UnstableManifold_Summary} can be explicitly verified using interval arithmetic. Of particular interest is the constant $\xi$ which we use in \eqref{eq:DecayAtInfinity} to define the constant $\eta_M$ from (H3). Note by reversing time, Theorem \ref{prop:UnstableManifold_Summary} can be used to provide explicit bounds on the stable manifold $ W^s_{loc}(p_1)$. 

To compute a heteroclinic orbit to \eqref{eq:HamiltonianODE}, we solve a boundary value problem between $W^u(p_0)$ and $ W^s(p_1)$. That is, we seek to find points $ q_0 \in W^u(p_0)$ and $ q_1 \in W^s(p_1)$ such that if we integrate $ q_0$ forward along the flow and we integrate $ q_1$ backwards along the flow, they will meet in the middle. 
That is $\Phi_T(q_0) = \Phi_{-T}(q_1)$ for some $ T>0$, where $ \Phi_T$ denotes the time $T-$solution map of \eqref{eq:HamiltonianODE}. To that end, we define in \eqref{eq:HeteroclinicBVP}  below a function $ h : \R^{2n} \to \R^{2n}$ so that its zeros are (generically) isolated, and correspond to a heteroclinic orbit $\phi: \R\to \R^{2n}$ connecting $p_0$ and $p_1$. 

Recall that $ W^u(p_0)$ and $W^s(p_1)$ will generically have  a transverse intersection only  within the $0-$energy surface $ \cH^0 = \{ (v,w) \in \R^{2n} : H(v,w) =0\}$. Away from singular points, $ \cH^0$ can be locally written as a graph over its first $ 2n-1$ coordinates; 
if $(v,w) \in \cH^0$ and $ w_n > 0$, then $w_n =   d_n \sqrt{- \sum_{k=1}^{n-1} | d_k^{-1} w_{k} |^2 - 2 G(v) }$. 
Hence, in order to check that $\Phi_T(q_0) = \Phi_{-T}(q_1)$, it suffices to check that the two points agree on their first $ 2n-1$ coordinates, and their $ w_n$ coordinates are of the same sign.  

To explictly define our boundary value problem operator $h$, 
suppose that the hypotheses of Theorem \ref{prop:UnstableManifold_Summary} are satisfied for some $r_u>0$ and $ B_u^0 \subseteq \R^n$, whereby  $P: B_u^0 \to \R^{2n}$  is  a chart for the (local)  unstable manifold of $p_0$.  
Similarly, for $r_s>0$ and $ B_s^1  \subseteq  \R^n$, suppose  $Q: B_s^1 \to \R^{2n}$ is a chart for the (local)  stable manifold of $p_1$.   
A heteroclinic orbit between $ p_0$ and $ p_1$  can  be identified by finding some $ q_0 \in B_u^0$, $ q_1 \in B_s^1$ and $ T>0$ such that $ \Phi_T(P(q_0)) = \Phi_{-T}(Q(q_1))$. 
While the heteroclinic orbit may be unique, the same may not be said of $q_0$ and $q_1$.  
To impose a phase condition, we fix $T>0$ and $r_u >0$, and  require that $ \|q_0\| = r_u$. 
Thus, we define our boundary value operator 
$ h:B_u^0 \times B_s^1  \to \R^{2n} $
 as follows: 
\begin{align} \label{eq:HeteroclinicBVP}
\left[
h(q_0,q_1)
\right]_i &:= 
\begin{cases}
\left[
\Phi_{T}(P(q_0)) - 
\Phi_{-T} (Q(q_1))
\right]_i
& \mbox{ if } 1\leq i \leq 2n-1,
   \\
 \|q_0\|^2 - r_u^2 &\mbox{ if } i =2n 
\end{cases} 
\end{align} 
Since $p_0$ and $p_1$ have equal energy, if $h(q_0,q_1) =0$ and the $2n$\textsuperscript{th} component of both  $\Phi_{T}(P(q_0)) $ and $\Phi_{-T} (Q(q_1))$ are of the same sign, 
then these component values are equal and moreover $\Phi_{T}(P(q_0)) =  \Phi_{-T} (Q(q_1)) =: \phi(0)$. 
It then  follows that $ \phi(x):= \Phi_{x}(\phi(0))$ is a heteroclinic orbit with limits: $  \lim_{x \to - \infty} \phi(x) = p_0$ and $\lim_{x \to +\infty} \phi(x) = p_1$.  

Thus, proving the existence of a locally unique heteroclinic orbit between equilibria $p_0$ and $p_1$ is essentially reduced to proving the existence of locally unique solutions to $0 = h(q_0,q_1)$ as defined in  \eqref{eq:HeteroclinicBVP}. 
For a given domain $U\subseteq \mathbb{IR}^{2n}$, we use the CAPD library \cite{kapela2020capd} to calculate rigorous enclosures of $ \Phi_T  (U)$ and $D \Phi_T  (U)$.
Lastly, we are able to prove that an explicit neighborhood $U \subseteq \mathbb{IR}^{2n}$  contains a unique zero of $h$ by using the interval-Krawczyk method  \cite{hansen2003global}.

\subsection{Determining $L_-$}
\label{sec:L-Calculation}

As described in Section 
\ref{S:L_-}, 
we wish to find $L_- >0$ such that 
$\mathbb{E}_-^u(x; 0) \cap \mathcal{D} = \{0\}$  for all $ x \leq - L_-$. 
Conditions for this are established in Proposition \ref{prop:L-}, namely if \eqref{E:L-condition} is satisfied. 
This inequality depends on a nontrivial constant $ \lambda_{\cT_-}$.

In Proposition \ref{prop:L-} we define the $\lambda_{\mathcal{T}_-} :=  \frac{K_-C_M}{\eta_M}e^{-\eta_M L_-}$  to bound the error when computing an eigenfunction, cf  \eqref{E:minus-infty-asymptotics}. 
The constants $C_M$ and $\eta_M$, as we recall from Hypothesis (H3), are chosen such that $|\mathcal{M}(x) - \mathcal{M}_\pm| \leq C_M e^{-\eta_M|x|}$. Thus by choosing $ L_-$ large we may ensure $|\mathcal{M}(x) - \mathcal{M}_\pm| \ll 1$. 
	
In the present application, we define  $ \cM(x) = \nabla^2G( \varphi(x)) $	relative to a heteroclinic orbit   $\phi = (\varphi,D \varphi_x)$ to \eqref{eq:HamiltonianODE}. 
We note, however, that  heteroclinic orbits are only unique up to translation; if $\phi:\R \to \R^{2n}$ is a heteroclinic orbit, then so too is $ \phi_\sigma(x) := \phi(x+\sigma)$ for any  $ \sigma \in \R$. 
In this sense the choice of $L_-$ is somewhat arbitrary, or said differently, highly dependent on the choice of translation $\sigma$. 

As a result, when bounding  $\lambda_{\cT_-}$ in Theorem \ref{prop:Bounding_L-} below, it is not the particular value of $ L_-$ but instead the properties of $ \phi(-L_-)$ which have a direct effect on our estimates. 
In our hypothesis we fix $ \phi(-L_-) \in W^u_{loc}(p_0) \subseteq B_u \times B_s$, where $B_u$ has a radius of $r_u$ and $B_s$ has a radius of $r_s = \vartheta r_u$.  
By taking $r_u $ small, we are able to control the size of  $\lambda_{\cT_-}$. 

\begin{Theorem} \label{prop:Bounding_L-}
Assume the hypothesis of Theorem \ref{prop:UnstableManifold_Summary}, having fixed appropriate constants $ r_u , \vartheta, \xi >0$, and define $K_-$ as in \eqref{eq:K_-def}. 
Suppose that $ \phi$ is a heteroclinic orbit to \eqref{eq:HamiltonianODE} such that   $ \phi(-L_-) \in W^u_{loc}(p_0)$.  
Then 
\[
\lambda_{\cT_-}  
\leq 
\xi^{-1} 
K_- C_G 
r_u \big\|   A_0 
\big\|  
 \sqrt{1+\LL^2}  
,
\]
where\footnote{Note that $D^3G(\pi_1 \circ \psi(w))$ is a rank 3 tensor. We may estimate the spectral norm of a rank-3 tensor $A = (a_{ijk})$ by defining matrices $A_k$ as  $(A_k)_{ij} = a_{ijk}$ and estimating  $\| A \| \leq \left( \sum_{k} \| A_{k}  \|^2 \right)^{1/2}$.}
\[
C_G \leq  \sup_{z \in S} 
\|    D^3 G( \pi_1 \circ \psi(z))  \| .
\]  
\end{Theorem}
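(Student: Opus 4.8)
The plan is to reinterpret the abstract decay constants $C_M,\eta_M$ of Hypothesis (H3) in terms of the geometry of the local unstable manifold supplied by Theorem \ref{prop:UnstableManifold_Summary}, and then substitute them into the formula $\lambda_{\mathcal{T}_-}=\tfrac{K_-C_M}{\eta_M}e^{-\eta_M L_-}$ from Proposition \ref{prop:L-contraction}. In the present setting $\mathcal{M}(x)=\nabla^2G(\varphi(x))$ and $\mathcal{M}_-=\nabla^2G(\varphi_-)$ with $\varphi_-=\pi_1(p_0)$, so Taylor's theorem along the straight segment from $\varphi_-$ to $\varphi(x)$ gives
\[
\mathcal{M}(x)-\mathcal{M}_-=\int_0^1 D^3G\big(\varphi_-+\sigma(\varphi(x)-\varphi_-)\big)\cdot(\varphi(x)-\varphi_-)\,\rmd\sigma .
\]
Since $\psi$ is affine and $S=B_u\times B_s$ is convex with $0\in S$, and $\phi(x)\in W^u_{\mathrm{loc}}(p_0)=\psi(W^u_{\mathrm{loc}}(0))\subseteq\psi(S)$ for all $x\le -L_-$, the segment $\{\varphi_-+\sigma(\varphi(x)-\varphi_-):\sigma\in[0,1]\}$ equals $\pi_1\circ\psi(\{\sigma z(x):\sigma\in[0,1]\})\subseteq\pi_1\circ\psi(S)$, where $z(x):=\psi^{-1}(\phi(x))$. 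Hence the integrand is bounded by $C_G\,|\varphi(x)-\varphi_-|$ with $C_G=\sup_{z\in S}\|D^3G(\pi_1\circ\psi(z))\|$, and it remains only to control $|\varphi(x)-\varphi_-|$.

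The crucial point is that $\xi$ is precisely the backward contraction rate inside the unstable cone. For $x\le -L_-$ we have $z(x)=\big(\pi_x z(x),\omega^u(\pi_x z(x))\big)$ with $|\pi_x z(x)|\le r_u$ and $|\omega^u(\pi_x z(x))|\le\LL\,|\pi_x z(x)|$. Writing the $\pi_x$-dynamics on $W^u_{\mathrm{loc}}(0)$ in mean-value form (using $f(0)=0$) and applying the logarithmic-minimum/logarithmic-norm bounds together with $|\omega^u(\cdot)|\le\LL|\cdot|$ and the definitions $\left[\pi_x\frac{\partial f}{\partial x}(S)\right]$, $\left[\pi_x\frac{\partial f}{\partial y}(S)\right]$, one obtains $\tfrac{\rmd}{\rmd t}|\pi_x z|^2\ge 2\xi|\pi_x z|^2$ with $\xi$ exactly the quantity in \eqref{eq:Def_xi}. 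Integrating backward from $x=-L_-$ yields $|\pi_x z(x)|\le r_u\,e^{\xi(x+L_-)}$ and then $|z(x)|\le\sqrt{1+\LL^2}\,|\pi_x z(x)|\le\sqrt{1+\LL^2}\,r_u\,e^{\xi(x+L_-)}$ for $x\le -L_-$. Since $\varphi(x)-\varphi_-=\pi_1\big(\psi(z(x))-p_0\big)=\pi_1(A_0 z(x))$, we conclude $|\varphi(x)-\varphi_-|\le\|A_0\|\sqrt{1+\LL^2}\,r_u\,e^{\xi(x+L_-)}$.

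Combining the two steps, $\|\mathcal{A}_-(x)\|=\|\mathcal{M}(x)-\mathcal{M}_-\|\le C_G\|A_0\|\sqrt{1+\LL^2}\,r_u\,e^{\xi L_-}\,e^{\xi x}$ for all $x\le -L_-$, which is exactly the bound on $\|\mathcal{A}_-(\cdot)\|$ used in the proof of Proposition \ref{prop:L-contraction} (only that half-line is needed there). Thus one may take $\eta_M=\xi$ and $C_M=C_G\|A_0\|\sqrt{1+\LL^2}\,r_u\,e^{\xi L_-}$, whereupon substituting into $\lambda_{\mathcal{T}_-}=\tfrac{K_-C_M}{\eta_M}e^{-\eta_M L_-}$ the factors $e^{\pm\xi L_-}$ cancel and we obtain $\lambda_{\mathcal{T}_-}\le\xi^{-1}K_-C_G\,r_u\|A_0\|\sqrt{1+\LL^2}$, as claimed.

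I expect the main obstacle to be the backward-contraction estimate with rate $\xi$: although condition (3) of Theorem \ref{prop:UnstableManifold_Summary} is itself a cone/rate hypothesis, the theorem as stated records only the graph $\omega^u$ and its Lipschitz constant, so one must supply the short logarithmic-norm argument identifying $\xi$ from \eqref{eq:Def_xi} as a lower bound for $\langle \pi_x z,\pi_x f(\pi_x z,\omega^u(\pi_x z))\rangle/|\pi_x z|^2$, taking care that the interval-matrix closures in \eqref{eq:Def_xi} over-approximate the true images over $S$ in the direction that preserves the inequality. The remaining ingredients — the Taylor remainder bound, the convexity argument placing the relevant segment inside $\pi_1\circ\psi(S)$, and the cancellation of the exponential factors — are routine.
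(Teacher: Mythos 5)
Your proposal is correct and follows essentially the same route as the paper: bound $\|\mathcal{A}_-(x)\|=\|\nabla^2 G(\varphi(x))-\nabla^2 G(\varphi_-)\|$ by a Taylor-remainder estimate with constant $C_G$, combine it with the exponential decay $\|\psi^{-1}\circ\phi(x)\|\le r_u\sqrt{1+\LL^2}\,e^{\xi(x+L_-)}$ for $x\le -L_-$ to read off $\eta_M=\xi$ and $C_M=C_G r_u\|A_0\|\sqrt{1+\LL^2}\,e^{\xi L_-}$, and then substitute into $\lambda_{\mathcal{T}_-}=\tfrac{K_-C_M}{\eta_M}e^{-\eta_M L_-}$ so that the $e^{\pm\xi L_-}$ factors cancel. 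The one genuine difference is that you re-derive the backward decay estimate by a direct logarithmic-minimum argument on $\tfrac{\rmd}{\rmd x}|\pi_x z|^2$ along the graph $z=(\pi_x z,\omega^u(\pi_x z))$, whereas the paper simply cites \cite[Theorem 14]{capinski2018computer} for the inequality \eqref{eq:DecayAtInfinity}; your sketch of that step is sound (the mean-value form together with $m_l$ for the $\partial f/\partial x$ block, the operator-norm bound and the cone condition $|\omega^u|\le\LL|\cdot|$ for the $\partial f/\partial y$ block, and the fact that the interval-matrix hulls only enlarge the feasible set so that the $\inf$ and $\sup$ in \eqref{eq:Def_xi} respect the inequality), so your version is slightly more self-contained at the cost of reproving a result available in the cited reference.
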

\begin{proof}
	We first calculate constants $ C_M,\eta_M$ defined in \eqref{E:A-bound} for which 
	\[
	\|\mathcal{A}_-(x)\| \leq C_M e^{-\eta_M |x|}, \qquad x \leq 0.
	\]
	Note that this bound is only needed for $ x \leq - L_-$. 
	We compute a bound on $\cA_-(x)$ below:
	\begin{align*}
\left\|	\cA_-(x) \right\|&= 
\left\|	\cM_- - \cM(x) \right\|\\
	&=\left\| D^2G( \varphi_-) -  D^2G( \varphi(x)) \right\| ,
	\end{align*} 
	where $ \varphi_- = \pi_1 p_0$. 
		Recall $ \phi = ( \varphi, D\varphi_x)$, where $D $ is the diffusion matrix.
	By Theorem \ref{prop:UnstableManifold_Summary} we have $ \psi^{-1} \circ \phi(x) \in S = B_u \times B_s$ for all $ x \leq -L_-$.  From the definition of $ C_G$ and the mean value theorem  it follows that 
	\[
	\|\cA_-(x)\| \leq C_G \| \varphi(x) - \varphi_-\| . 
	\]
	To bound $\| \varphi(x) - \varphi_-\|$, we note 
from   \cite[Theorem  14]{capinski2018computer}   that 
	\begin{align} \label{eq:DecayAtInfinity}
	\| \psi^{-1} \circ \phi(-(x+L_-)) \| 
	&\leq 
     r_u
     	\sqrt{1+\LL^2} 
	e^{-\xi |x|}
	, \quad\quad x\geq 0.
	\end{align}	
	Hence  for $ x \leq - L_-$ it follows that 
	\[
	\| \phi(x) - p_0 \| 
	\leq r_u \|A_0\| \sqrt{1+\LL^2} 
	e^{-\xi | x+L_-|} \quad \quad x \leq -L_-.
	\]
	Thereby, if we define:
	\begin{align}
		\eta_M &= \xi, 
		&
		C_M  &= C_G r_u \|A_0\|  \sqrt{1+\LL^2} 
		e^{\xi L_-}, \label{eq:Eta_M_C_M}
	\end{align}
	then \eqref{E:A-bound}  is satisfied for $ x \leq -L_-$. 
		From Proposition \ref{prop:L-contraction} we have 
	\[
	\lambda_{\cT_-}  
	\leq 
	\frac{K_- C_M }{\eta_M} e^{-\eta_ML_-} 
	=
	\xi^{-1} 
	K_- C_G 
	r_u \big\|   A_0 
	\big\|  
	\sqrt{1+\LL^2}  .
	\]
\end{proof}

\subsection{Calculating $ \bE_-^u(x)$}
\label{sec:CalculatingE-u}

 Recall our definition  of $\mathbb{E}^u_-(x)$ as being the subspaces of solutions to \eqref{E:main-sys} which are asymptotic to $\mathbb{E}^u_{-\infty}$. 
 That is, if $ U_i(x): \R \to \R^{2n}$ are linearly independent solutions to    \eqref{E:main-sys} for $ 1 \leq i \leq n$ such that $\lim_{x \to - \infty} U_i(x) = 0$, 
 then
 \[
 \mathbb{E}^u_-(x) = \mbox{span}\{ U_1(x) , \dots , U_n(x) \}, \qquad\qquad  \forall x \in \R.
 \] 
Recall also our results from Proposition \ref{prop:L-contraction} and  Proposition \ref{prop:L-},  that 
$\mathbb{E}^u_-(x)$ may be represented by a frame matrix $ W^{-,u}(x) $ for $ x \leq -L_-$  with columns satisfying 
\[
 \|W_j^{-,u}(\cdot) - V^{-,u}_j \|_X \leq \frac{\lambda_{\mathcal{T}_-}}{1-\lambda_{\mathcal{T}_-}}|V_j^{-,u}| =: \epsilon_j,
 \qquad \qquad x \leq -L_-.
\]
where $X = L^\infty((-\infty, -L_-], \R^{2n} )$. 
 To compute a frame matrix of $\mathbb{E}^u_-(x)$ for $x \geq -L_-$ one   has to solve \eqref{E:main-sys} taking as initial values the columns   $ W_j^{-,u}(-L_-)$. 
  A difficulty here is that we only have a  basis of $\mathbb{E}^u_-(-L_-)$ accurate up to a certain error, and this error needs to be propagated forward when solving the IVP \eqref{E:main-sys}.

  To keep track of error, for $ \cV^{-,u} = \left[ V_1^{-,u} | \dots | V_n^{-,u}  \right]$ fix a numerical approximation of the eigenvectors 
  $ \bar{\cV}^{-,u} = \left[ \bar{V}_1^{-,u} | \dots | \bar{V}_n^{-,u}  \right]$ and fix $ \bar{\epsilon}_i$ such that $ | V_i^{-,u} - \bar{V}_i^{-,u}  | \leq \bar{\epsilon}_i$.    
  Define  $E(x) : ( - \infty ,-L_-] \to   \mathbb{R}^{2n\times  n}$    such that $ W^{-,u}(x)  = \bar{\cV}^{-,u} + E(x)$.
It follows that the columns satisfy  
\begin{align*}
| E_i(x) | &\leq |W_i^{-,u}(x) - V^{-,u}_i | + | V_i^{-,u} - \bar{V}_i^{-,u}  | \\
&\leq \epsilon_i + \bar{\epsilon}_i
\end{align*}
for $1 \leq i \leq n$ and $ x \leq -L_-$. 
To bound $E(x)$ computationally we   construct an interval matrix $ \cE^- \in \mathbb{IR}^{2n\times  n }$  below: 
\begin{align} \label{eq:L_-Error}
\cE^- &:= 
\left[
\begin{array}{c|c|c}
\null \left[-\epsilon_1-\bar{\epsilon}_1,\epsilon_1+\bar{\epsilon}_1\right] &   & \left[-\epsilon_n-\bar{\epsilon}_n,\epsilon_n+\bar{\epsilon}_n\right] \\
\vdots  & \hspace{2mm}\cdots\hspace{2mm} & \vdots \\
\null \left[-\epsilon_1-\bar{\epsilon}_1,\epsilon_1+\bar{\epsilon}_1\right] &   & \left[-\epsilon_n-\bar{\epsilon}_n,\epsilon_n+\bar{\epsilon}_n\right] 
\end{array}
\right] 
.
\end{align} 
Note then that $ E(x) \in \cE^-$ and therefore    $ W^{-,u}(x) \in \bar{\cV}^{-,u} +  \cE^-$ for $ x \leq -L_-$.   
To bound a frame matrix of  $\mathbb{E}^u_-(x)$ for $ x \geq -L_-$, 
it suffices to bounds solutions of \eqref{E:main-sys} with the initial values $\tilde{W}_i^{-,u}(-L_-) = \bar{V}_i^{-,u} + \tilde{E}_i$ for each $ 1 \leq i \leq n$ and  all possible initial data $ \tilde{E}_i  \in \cE_i^-$. 
This may be accomplished with standard techniques in validated numerics, eg \cite{kapela2020capd}. 

Note first that any error terms of the initial condition in the unstable eigendirections will grow exponentially.  
To mollify this problem  we perform a  change of variables; 
for $ x = -L_-$ 
there exist matrices $E_u(-L_-),E_s(-L_-) \in \R^{n\times n}$ such that 
\[
E(-L_-)
=  \bar{\cV}^{-,u}  \cdot E_u(-L_-)  + \bar{\cV}^{-,s}  \cdot E_s(-L_-) 
 .   
\] 
From its definition in \eqref{eq:L_-Error} it follows that 
$
\cE^- \ni 
 \bar{\cV}^{-,u}  \cdot E_u(-L_-)  + \bar{\cV}^{-,s}  \cdot E_s(-L_-) 
$.
Hence  interval enclosures $ \cE_u , \cE_s \in \mathbb{IR}^{n\times n }$  containing
$E_u(-L_-)$ and $ E_s(-L_-)$ may be computed 
as follows
\[
\left[
\begin{array}{cc}
\cE_u \\
\hline
\cE_s
\end{array}
\right]
:=
\left[
\begin{array}{c|c}
\bar{\cV}^{-,u} & \bar{\cV}^{-,s}
\end{array}
\right]^{-1}
\cE^{-} .
\] 
If $I + E_u(-L_-)$ is invertible for the $n\times n$ identity matrix $I$, then a new frame matrix $U(-L_-) \in \mathbb{R}^{2n \times n}$ for $\bE_-^u(-L_-)$  may be defined as follows 
\begin{align} \label{eq:NumericalFrame_Center}
U(-L_-) &:= W^{-,u}(-L_-) \cdot \left(I + E_u(-L_-)  \right)^{-1} \\
&= \left(\bar{\cV}^{-,u} + E(-L_-) \right) \cdot \left(I + E_u(-L_-)\right)^{-1} 
\nonumber 
\\
&= \bar{\cV}^{-,u}  + \bar{\cV}^{-,s}  \cdot E_s(-L_-) \left(I + E_u(-L_-) \right)^{-1} 
 \nonumber 
.
\end{align}
Hence if $I + \tilde{E}_u$ is invertible for all $ \tilde{E}_u \in \cE_u$, 
then  we may define an interval enclosure  $\cU(-L_-) \in \mathbb{IR}^{2n \times n}$  
of the frame matrix $U(-L_-)$ as  follows 
\[
\cU(-L_-) :=  \bar{\cV}^{-,u}  + \bar{\cV}^{-,s}  \cdot \cE_s \left(I + \cE_u \right)^{-1} .
\]
Note that all of the error terms in $ \cU(-L_-)$ are essentially  in the stable eigendirections.

Thus for any $L_+ \geq 0$ we may  define a frame matrix $U:[-L_-,L_+] \to \R^{2n \times n}$ for $\bE_-^u(x)$ by defining the $i^{th}$ column of  $U(x)$
as the solution to the initial value problem \eqref{E:main-sys} taking as an initial condition at $x=-L_-$ the $i^{th}$ column of $U(-L_-)$.

Using rigorous numerics, we are able to similarly extend $\cU(-L_-)$ to a function $ \cU: [-L_- ,L_+] \to \mathbb{IR}^{2n \times n}$ and obtain an enclosure of the frame matrix $U(x)$ for $ x \in [-L_- ,L_+]$. 
In particular,  we define the $i^{th}$ column of  $\cU(x)$  as the solution to the initial value problem \eqref{E:main-sys} taking as an initial condition at $x=-L_-$ the $i^{th}$ column of $\cU(-L_-)$.

\subsection{Determining $L_+$}
 \label{sec:CalculationForL+}
 
To prove that 
$\mathbb{E}_-^u(x; 0) \cap \mathcal{D} = \{0\}$  for all $ x \geq  L_+$, we aim to show that $ \| \pi_{\Gamma_1} - \pi_{\Gamma_2}\| < 1 $ using the estimate in \eqref{eq:ProjectionMatrixBound}. 
Analogous to Theorem \ref{prop:Bounding_L-}, we may explicitly obtain bounds on $\lambda_{\cT_+}$ and define $\epsilon_0$ as in \eqref{E:def-epsilon0}.
To obtain all of the necessary constants for Proposition \ref{prop:main2}, we need to compute bounds on 
\begin{align} \label{eq:epsB_muStar}
	\epsilon_b &:=\frac{1}{\sqrt{\mu^*}} e^{-2 \nu_n^+ L_+ } \| \tilde{B} \|, 
&
\mu^* &:= \min\{ \mu: \mu \in \sigma(\tilde \Gamma^T \tilde \Gamma)\},
\end{align}
The matrices $\tilde{B}, \tilde{\Gamma}  $ are defined in terms of how a frame matrix $U(0)$ of $ \mathbb{E}_-^u(0)$   may be written in terms of basis vectors spanning $\mathbb{E}^s_+(0)$ and $\mathbb{E}^u_+(0)$, 
 cf  \eqref{E:soln-exp-2} and \eqref{E:Gamma2-frame}. 
   By increasing $L_+$ one is able to control the size of $ \epsilon_b$, however   the precise value of $L_+$ should be taken with a grain of salt as heteroclinic orbits are translation invariant.     
   As Theorem \ref{prop:Bounding_L-} was stated without explicit reference to the value of $L_-$,  we will without loss of generality choose $ L_+ =0$. 

Before further discussing how to compute the matrices $\tilde{B}, \tilde{\Gamma} $,  recall that throughout Section \ref{S:L_+} we assume the particular form of the frame matrix $U(x)$ of $ \mathbb{E}_-^u(x)$ given  in \eqref{E:subspace-rep}. 
Namely, we assume that $U(x) = [ U_\varphi |  U_1 | \dots |  U_{n-1}]$ where $ U_{\varphi} = ( \varphi',D\varphi'')$, and $ U_1 , \dots ,  U_{n-1}$ are some other vectors for which  the columns of $U(x)$ span $ \mathbb{E}_-^u(x)$.  
However, this choice of frame matrix is generically at odds with how we defined the frame matrix $U(x)$ in \eqref{eq:NumericalFrame_Center}, where for $ 1 \leq i \leq n $ we simply  took each column $U_i(-L_-)$ to be an approximate unstable eigenvector of  $\mathcal{A}_{-\infty}$.

To define a frame matrix of the form required in \eqref{E:subspace-rep}, first fix some $1 \leq i \leq n$, and define a matrix $ 	U_{\hat{i}}(x) \in \R^{2n \times (n-1)}$ by 
\begin{align}
	U_{\hat{i}}(x) &:= 
	\left[
	\begin{array}{c|c|c|c|c|c}
		U_1(x) & \dots & U_{i-1}(x) & U_{i+1}(x) &  \dots & U_{n}(x)
	\end{array}
	\right] 
\end{align}
where each $U_k$ is a column of the frame matrix $U$ defined in \eqref{eq:NumericalFrame_Center}. 
So long as $ U_{\varphi}$ is not in the span of the columns of $ 	U_{\hat{i}}(x)$ then $\tilde{U}(x) := [ U_\varphi(x) | U_{\hat{i}}(x)]$ is a frame matrix for $ \bE_-^u(x)$.

For this particular choice of frame matrix, we may now compute the matrices $B, \Gamma \in  \mathbb{R}^{n \times n-1}$. 
Recall from  \eqref{E:soln-exp-2} that  $\beta_j^k = \tilde \beta_j^k e^{-\nu_j^+ x}$,  $\gamma^k_j = \tilde \gamma_j^k e^{\nu_j^+ x}$, and 
\begin{align*}
	U_k(x) &= \sum_{j=1}^n  \tilde{\gamma}_j^k  \tilde{V}_j^{+,u} +  \tilde{\beta}_j^k \tilde{V}_j^{+,s}. \\
	&= \sum_{j=1}^n  {\gamma}_j^k  (V_j^{+,u} + \tilde{W}_j^{+,u}(x)) +  {\beta}_j^k  (V_j^{+,s} + \tilde{W}_j^{+,s}(x)) . 
\end{align*}
Note from \eqref{E:W-error} that $|\tilde{W}_j^{+, s/u}(x)| \leq \epsilon_0$ for all $x \geq L_+$ and all $  j, s, u.$ 
Thus, having assumed $L_+=0$,  fixed $1 \leq i \leq n$, and  writing $\tilde{B} = [\tilde{\beta}^1 | \dots| \tilde{\beta}^{n-1}  ]$ and $\tilde{\Gamma} = [\tilde{\gamma}^1 | \dots| \tilde{\gamma}^{n-1}  ]$ as in \eqref{E:BGtilde}, 
we obtain 
\[
U_{\hat{i}} (L_+) = \left(
\left[
\begin{array}{c|c}
\cV^{+,u} & \cV^{+,s} 
\end{array}
\right] 
+
\left[
\begin{array}{c|c}
	\cW^{+,u} & \cW^{+,s} 
\end{array}
\right] 
\right)
\left[
\begin{array}{c}
\tilde{\Gamma}  \\
	\hline 
	\tilde{B}
\end{array}
\right] .
\]
Let $\bar{\cV}^{+,u} ,  \bar{\cV}^{+,s} $ denote numerical approximations of $\cV^{+,u} ,  \cV^{+,s} $ and define an interval matrix 
$\mathcal{E}_{L_+} \in \mathbb{IR}^{2n \times 2n}$ such that 
\[
\mathcal{E}_{L_+} \ni 
\left[
\begin{array}{c|c}
	\cW^{+,u} & \cW^{+,s} 
\end{array}
\right] 
+
\left[
\begin{array}{c|c}
	\cV^{+,u} &\cV^{+,s} 
\end{array}
\right] 
-
\left[
\begin{array}{c|c}
	\bar{\cV}^{+,u} & \bar{\cV}^{+,s} 
\end{array}
\right],
\]
from which it follows that 
\begin{align} \label{eq:GammaBetaCalc}
	\left[
	\begin{array}{c}
		\tilde{\Gamma}  \\
		\hline 
		\tilde{B}
	\end{array}
	\right] 
&	\in \left( I
	+
	\left[
	\begin{array}{c|c}
		\bar{\cV}^{+,u} &\bar{\cV}^{+,s} 
	\end{array}
	\right]^{-1}
	\mathcal{E}_{L_+}
	\right)^{-1}
	\left[
	\begin{array}{c|c}
		\bar{\cV}^{+,u} &\bar{\cV}^{+,s} 
	\end{array}
	\right]^{-1}
	U_{\hat{i}} (L_+).
\end{align}
Using validated numerics we are able to solve \eqref{eq:GammaBetaCalc} to obtain enclosures of $\tilde{\Gamma} \in  \tilde{\Gamma}_I\in \mathbb{IR}^{n\times n-1}$ and   $\tilde{B} \in  \tilde{B}_I\in \mathbb{IR}^{n\times n-1}$.

From here it is straightforward to bound $\| \tilde{B}\|$ in terms of the interval enclosure $ \tilde{B}_I$ needed in the estimate for $ \epsilon_b$. However obtaining a lower bound on $\mu^*$, cf \eqref{eq:epsB_muStar}, requires slightly more finesse.  
First, let us write $ \tilde{\Gamma}_I = \tilde{\Gamma}_c + \tilde{\Gamma}_\Delta$, where $ \tilde{\Gamma}_c \in \mathbb{IR}^{n\times n-1}$ is the midpoint of $ \tilde{\Gamma}_I$ with zero width and $ \tilde{\Gamma}_\Delta = \tilde{\Gamma}_I - \tilde{\Gamma}_c$. 
We obtain a bound on $ \mu^*$ by way of the Rayleigh quotient.
\begin{align*}
\mu^* &= \min \{ \mu : \mu \in  \sigma( \tilde{\Gamma}^T \tilde{\Gamma}) \} \\
&\geq \inf_{\tilde{\Gamma}_\delta \in \tilde{\Gamma}_\Delta}
\inf_{x \in \R^{n-1}; |x|=1}
\frac{x^T (\tilde{\Gamma}_c+\tilde{\Gamma}_\delta)^T (\tilde{\Gamma}_c+\tilde{\Gamma}_\delta) x}{x^T x}\\
&= \inf_{\tilde{\Gamma}_\delta \in \tilde{\Gamma}_\Delta}
\inf_{x \in \R^{n-1}; |x|=1}
\frac{
x^T \left(
 \tilde{\Gamma}_c ^T \tilde{\Gamma}_c  
+
2  \tilde{\Gamma}_\delta ^T \tilde{\Gamma}_c 
+
 \tilde{\Gamma}_\delta ^T \tilde{\Gamma}_\delta 
\right) x
}{x^T x} \\
&\geq 
\min \{ \mu : \mu \in  \sigma( \tilde{\Gamma}_c^T \tilde{\Gamma}_c) \} 
-2 \| \tilde{\Gamma}_\Delta^T \tilde{\Gamma}_c \|
-  \| \tilde{\Gamma}_\Delta ^T \tilde{\Gamma}_\Delta \|
\end{align*}
As $\tilde{\Gamma}_c$ is an interval matrix with zero width, calculating $\min \{ \mu : \mu \in  \sigma( \tilde{\Gamma}_c^T \tilde{\Gamma}_c) \}$ can be done with  little error. 
 Thus, in the manners described above, we are able to estimate $\epsilon_b$ and $ \mu^*$. 
 
 Furthermore, recall that the matrices $\tilde{B}$ and $\tilde{\Gamma}$ were defined relative to $ U_{\hat{i}}$ for a choice of $ 1 \leq i \leq n$. 
Assuming that $U_{\varphi} \notin \text{span }\{  U_k \}$ for all  $1\leq k\leq n$, then there are $n$ different ways to define a frame matrix $\left[ U_{\varphi}| U_{\hat{i}} \right]$ and  compute corresponding    matrices $\tilde{B}$ and $\tilde{\Gamma}$.   
In practice, we optimize this choice of $i$  so that   $\epsilon_b = \frac{\|\tilde{B}\|  }{\sqrt{\mu^*}}$ is minimized.

\subsection{Counting Conjugate Points}
\label{sec:CountingConjugatePoints}

By Lemma \ref{lem:conj-det}, the problem of computing conjugate points is equivalent to counting the number of zeros of the function
\[
F(x) := \det \left[ \pi_1 \circ U(x) \right] . 
\]
In Section \ref{sec:CalculatingE-u} we discussed how to compute an interval enclosure of the  frame matrix $ U( x) \in  \cU( x) $   for $ \bE_-^u(x)$ and $x \in [-L_-, L_+]$. 
In a straightforward manner, we are able to obtain interval bounds on $F$, and $F'$ as well by applying the Jacobi formula 
\[
\tfrac{d}{dx} \det U_1(x) = \text{tr}\left( \text{adj}(U_1(x)) U_1'(x)\right) . 
\]
\begin{Remark}
For $ x \leq -L_-$ we have bounds for a frame matrix of $ \mathbb{E}_-^u(x)$ given by  $ U(x) \in \bar{\cV}^{-,u} + \cE^-$ for $ \cE^-$ given in \eqref{eq:L_-Error}.
An alternative proof for Proposition \ref{prop:L-} showing $\bE_-^u(x) \cap \mathcal{D} = \{0\}$ may be obtained by directly computing $ \det \left( \pi_1 \circ (\bar{\cV}^{-,u} + \cE^-) \right)$ using interval arithmetic and showing this quantity is bounded away from $0$.
\end{Remark}

If $L_-$ and $L_+$ are chosen so that Propositions  \ref{prop:L-} and \ref{prop:main} are satisfied, then all of the zeros of $F$  are in the interval $ [ - L_-,L_+]$. 
To rigorously enumerate the zeros of $F(x)$, we take an approach based on the bisection method. 
That is,  we subdivide $ [-L_-,L_+]$ into a cover $\{ [x_i,x_{i+1}]\}$ of subintervals with non-overlapping interiors.  
For each subinterval $ [x_i,x_{i+1}]$,  we attempt to prove either that $F$ is bounded away from $0$, or $F$ has a unique zero in $(x_i,x_{i+1})$.
We do so by checking the hypotheses of the two statements  below:  
\begin{enumerate}
	\item If $ F(x_i) \cdot F(x_{i+1}) > 0$ (ie the endpoints have the same sign), and any of the following hold
	\begin{enumerate}
		\item $0 \notin F([x_i,x_{i+1}])$; ie direct computation.
		\item $0 \notin F'([x_i,x_{i+1}])$
		\item $ 0 \notin \left( F(x_i) + \left[0,\frac{x_{i+1} -x_i}{2}\right]    F'([x_i,x_{i+1}]) \right) \cup
		 \left( F(x_{i+1}) - \left[0,\frac{x_{i+1} -x_i}{2}\right]    F'([x_i,x_{i+1}]) \right) $ 
	\end{enumerate}   
	then there are no zeros of $F$ in $[x_i,x_{i+1}]$.  Note condition (c) is essentially the mean value theorem.  
	\item If $ F(x_i) \cdot F(x_{i+1}) < 0$ (ie the endpoints have different sign) and $0 \notin F'([x_i,x_{i+1}])$, then there is a unique zero of $F$ in $(x_i,x_{i+1})$. 
\end{enumerate}
If either statement (i) or (ii) holds on every subinterval in the cover $\{ [x_i,x_{i+1}]\}$, then $F$ has a number of zeros in $[-L_-,L_+]$ equal to the number of subintervals where statement $(ii)$ holds. 

If on any subinterval the computer program is unable to verify either statement (i) or (ii), then the computer assisted proof for enumerating the zeros of $ F$ on $[-L_-,L_+]$ is not successful. 
In such a case, the computation may be attempted again using a cover of $ [-L_- , L_+]$ with a finer mesh. This will generically improve the interval enclosure of estimates such as $F([x_i,x_{i+1}])$ or $F'([x_i,x_{i+1}])$.  
Further improvements may be had by reducing $L_+ + L_-$, because -- by the nature of rigorously solving an initial value problem -- as $x$ increases, the accuracy at which we are able to compute $F$ decreases.  
However, the value of $L_+ +L_-$ can only be reduced to the extent that we are still able to, as detailed in section \ref{sec:CalculationForL+}, prove that $\mathbb{E}_-^u(x) \cap \mathcal{D} = \{ 0\}$ for all $ x \geq L_+$.  


\section{(Un)stable Fronts in a Reaction-Diffusion System}\label{S:example1}

In this section we construct example PDEs of the form \eqref{E:gradrd}, use validated numerics to construct front solutions, and use the conjugate point method outlined above to show that fronts can be both stable and unstable. As noted in the introduction, this complements previous results. Classical Sturm-Liouville theory can be used to show that any front in a scalar reaction-diffusion equation must be stable (if the essential spectrum is stable), while any pulse in a scalar reaction-diffusion equation must be unstable. In \cite{BeckCoxJones18} it was shown that, for equations of the form \eqref{E:gradrd}, a symmetric pulse must also be unstable, thus in some sense generalizing the pulse instability result from the scalar to the system case. Here we show that the front stability result from the scalar case does not extend to the system case; fronts can be either stable or unstable. Although it is not surprising that both stable and unstable fronts can exists for systems of reaction-diffusion equation, we are not aware of this being discussed anywhere. 

We will build our example out of the scalar bistable equation
\begin{equation}\label{E:bistable}
u_t = u_{xx} + b f(u), \qquad f(u) =  u\left( u - \tfrac{1}{2}\right)(1-u), 
\end{equation}
which has an explicit front solution given by 
\begin{equation}\label{E:upcoupled-front}
\varphi_0(x; b) = \frac{1}{1+ e^{-x\sqrt{b/2}}}
\end{equation}
that is spectrally stable by the classical results of Sturm-Liouville theory. We note that the essential spectrum of the linearized operator is given by $(-\infty, -b/2]$, which is stable if $b > 0$. We will couple three copies of this equation together in such a way that the resulting system of equations has the gradient structure of \eqref{E:gradrd} so that all of the preceding results apply. To that end, consider the system
\begin{eqnarray} \label{eq:SpecificPDE}
\partial_t u_1 &=& \partial_x^2 u_1 +  b_1 f(u_1) + c_{12} g(u_1,u_2) \nonumber \\ 
\partial_t u_2 &=& \partial_x^2 u_2 +  b_2 f(u_2) + c_{12} g(u_2,u_1) + 
c_{23} g(u_2,u_3)
\label{E:ex1} \\
\partial_t u_3 &=& \partial_x^2 u_3 +  b_3 f(u_3) + c_{23} g(u_3,u_2), \nonumber 
\end{eqnarray}
where $f,g$ are given by 
\begin{align*}
f(u) &= u( u - \tfrac{1}{2})(1-u) 
&
g(u, w) &= w(1-w)(u- \tfrac{1}{2}).
\end{align*}
The coefficients $c_{ij}$ are the coupling coefficients; the positive parameters $b_i$ control the extent to which the uncoupled equations are the same. If the $c_{ij}$ are all zero, then there is a stationary front solution
\[
\Phi_0(x) = (\varphi_0(x; b_1),  \varphi_0(x; b_2),  \varphi_0(x; b_3)).
\]
Since each of the three components can be translated independently of the others, in this case the front is stable with a triple eigenvalue at the origin. The corresponding three independent eigenfunctions are given by
\[
(\varphi_0'(x; b_1),  0, 0), \qquad (0,  \varphi_0'(x; b_2),  0), \qquad (0,0,  \varphi_0'(x; b_3)).
\]
This system is of the form \eqref{E:gradrd} with $G: \R^3 \to \R$ given by
\begin{align} \label{eq:PotentialEnergy}
G(u_1,u_2,u_3) &= - \frac{1}{4} \sum_{1 \leq i \leq 3 }  b_iu_i^2 (1-u_i)^2 - \frac{1}{2} \sum_{1 \leq i \leq 2} c_{i, i+1} u_i (1 - u_i ) u_{i+1} ( 1 - u_{i+1}).
\end{align}
We will prove the following. 
\begin{Theorem}\label{thm:SpecificParameters}
Consider the PDE in \eqref{eq:SpecificPDE} with the parameter   $ b = (b_1,b_2,b_3)=(1,.98,.96)$ fixed. At each of the four parameter combinations $c_{\pm,\pm} = (\pm c_{12}, \pm c_{23}) = ( \pm .04, \pm .02)$,  there exists a standing wave solution  $\varphi_{\pm,\pm}$ to \eqref{eq:SpecificPDE} such that 
\begin{align*}
\lim_{x \to - \infty} \varphi_{\pm,\pm} (x) &= (0,0,0), &\lim_{x \to +\infty} \varphi_{\pm,\pm} (x) &= (1,1,1) .
\end{align*}
Furthermore,
\begin{itemize}
\item There are exactly 0 positive  eigenvalues in the point spectrum of   $\varphi_{-,-}$. 
\item There is exactly 1 positive  eigenvalue in the point spectrum of  both   $\varphi_{+,-}$ and $\varphi_{-,+}$. 
\item There are exactly 2 positive  eigenvalues in the point spectrum of   $\varphi_{+,+}$.
\end{itemize}
\end{Theorem}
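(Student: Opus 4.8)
The plan is to carry out, separately for each of the four sign choices $c_{\pm,\pm}$, the five-step procedure of Section~\ref{sec:Methodology} using validated numerics, so that Theorem~\ref{thm:prev-thm} converts the resulting rigorous count of conjugate points into the count of positive point-spectrum eigenvalues asserted in the theorem. Throughout, the PDE \eqref{eq:SpecificPDE} has the gradient form \eqref{E:gradrd} with diffusion matrix $D = I_3$ and potential $G$ as in \eqref{eq:PotentialEnergy}, and one checks directly that $v_0 = (0,0,0)$ and $v_1 = (1,1,1)$ are critical points of $G$ with $G(v_0) = G(v_1) = 0$, so that $p_0 = (v_0,0)$ and $p_1 = (v_1,0)$ are equal-energy equilibria of the Hamiltonian system \eqref{eq:HamiltonianODE}.

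First I would verify Hypotheses~\ref{H:potential} and~\ref{H:simple}. (H1) is immediate from smoothness and symmetry of $\nabla^2 G$. For (H2) one computes the constant matrices $\mathcal{M}_\pm = \nabla^2 G(v_{0,1})$ explicitly --- each is a small symmetric perturbation of $-\tfrac12 I_3$ --- and certifies with interval arithmetic that they are negative definite with three distinct eigenvalues; this also places the essential spectrum of $\mathcal{L}$ in $(-\infty,-\delta]$ for some $\delta>0$, so that ``positive eigenvalue in the point spectrum'' coincides with ``unstable eigenvalue.'' Hypothesis (H3) will be obtained a posteriori from the exponential decay estimate \eqref{eq:DecayAtInfinity} supplied by the (un)stable manifold theorem, which yields the constants $\eta_M=\xi$ and $C_M$ via \eqref{eq:Eta_M_C_M}. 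Hypothesis (H4), simplicity of the zero eigenvalue, is the only hypothesis requiring real care: since $\mathcal{L}$ is self-adjoint, its algebraic and geometric multiplicities at $0$ agree, so it suffices to show $\dim\bigl(\mathbb{E}^u_-(0)\cap\mathbb{E}^s_+(0)\bigr)=1$, i.e.\ that $W^u(p_0)$ and $W^s(p_1)$ meet transversally inside the zero-energy surface $\mathcal{H}^0$ (dimension count: $3+3-5=1$); this transversality is exactly what a nondegenerate, interval-Krawczyk-verified zero of the boundary-value operator $h$ in \eqref{eq:HeteroclinicBVP} certifies.

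Next I would run the computation for a fixed sign choice. Step~(i): choose $r_u$, $r_s=\vartheta r_u$, $T>0$ and verify the cone and boundary conditions of Theorem~\ref{prop:UnstableManifold_Summary} for $W^u_{loc}(p_0)$ and, by time reversal, $W^s_{loc}(p_1)$, obtaining charts $P,Q$; then apply the interval-Krawczyk method to $h$ in \eqref{eq:HeteroclinicBVP} (using CAPD for $\Phi_{\pm T}$ and $D\Phi_{\pm T}$) to prove the existence of a locally unique heteroclinic orbit $\phi=(\varphi,\varphi')$ from $p_0$ to $p_1$, checking that the $w_3$-components of $\Phi_T(P(q_0))$ and $\Phi_{-T}(Q(q_1))$ have the same sign. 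Step~(ii): with $K_-$ from \eqref{eq:K_-def} and the constants $C_G$, $\|A_0\|$, $\xi$ of Theorem~\ref{prop:Bounding_L-}, bound $\lambda_{\mathcal{T}_-}$ and choose $L_-$ so that \eqref{E:L-condition} holds, giving $\mathbb{E}^u_-(x)\cap\mathcal{D}=\{0\}$ for $x\le -L_-$. Step~(iii): form the interval enclosure $\mathcal{U}(-L_-)$ of the frame matrix via \eqref{eq:NumericalFrame_Center} and integrate \eqref{E:main-sys} forward with CAPD to enclose $\mathcal{U}(x)$ on $[-L_-,0]$, taking $L_+=0$. Step~(iv): from $\mathcal{U}(0)$ and a suitable deleted-column frame $U_{\hat i}$ compute $\tilde\Gamma,\tilde B$ via \eqref{eq:GammaBetaCalc}, then $\epsilon_0$ (as in \eqref{E:def-epsilon0}), $\epsilon_b$, $\mu^*$ and the constants $C_P,C_Q,C_{M_1},C_{M_2}$ of Proposition~\ref{prop:main2}, optimizing over $i$ to minimize $\epsilon_b$, and check that the right-hand side of \eqref{eq:ProjectionMatrixBound} is strictly less than one; Theorem~\ref{thm:main} then gives $\mathbb{E}^u_-(x)\cap\mathcal{D}=\{0\}$ for $x\ge 0$. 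Step~(v): since Steps~(ii) and~(iv) confine all conjugate points to $[-L_-,0]$, apply the bisection enumeration of Section~\ref{sec:CountingConjugatePoints} to $F(x)=\det[\pi_1 U(x)]$ on $[-L_-,0]$, using the Jacobi formula for $F'$, refining the mesh until every subinterval satisfies criterion (i) or (ii); the number of sign-change subintervals is the number of conjugate points, hence by Lemma~\ref{lem:conj-det} and Theorem~\ref{thm:prev-thm} the number of positive eigenvalues. Repeating at $c_{-,-},c_{+,-},c_{-,+},c_{+,+}$ should return $0,1,1,2$, which is consistent with the picture that switching on the two couplings splits the triple zero eigenvalue of the uncoupled system (the $\varphi'$-direction remaining at $0$) into eigenvalues whose signs track those of $c_{12},c_{23}$.

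The main obstacle is logistical rather than conceptual: all of the a posteriori inequalities --- the cone conditions of Theorem~\ref{prop:UnstableManifold_Summary}, the Krawczyk contraction for $h$, the inequality \eqref{E:L-condition}, the requirement that the right-hand side of \eqref{eq:ProjectionMatrixBound} be strictly less than one, the certification $\mu^*>0$, and criteria (i)/(ii) on every bisection subinterval --- must all hold with one consistent choice of $r_u,r_s,T,L_-$ and mesh. There is a genuine tension in $L_-$: it must be large enough for Theorem~\ref{prop:Bounding_L-} and \eqref{E:L-condition}, yet the enclosure of $U(x)$ from rigorous integration of \eqref{E:main-sys} deteriorates as the length $L_-+L_+$ of the integration interval grows, which then forces a finer mesh in Step~(v). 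Balancing these, and keeping the parameters $b=(1,.98,.96)$ and the couplings well away from the locus where the zero eigenvalue fails to be simple (so that Step~(i)'s Krawczyk step --- which implicitly certifies (H4) --- succeeds), is where the effort goes; if any check fails one refines the mesh or adjusts $r_u,T,L_-$ and retries.
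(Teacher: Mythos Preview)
Your proposal is correct and follows essentially the same route as the paper: you carry out the five-step procedure of Section~\ref{sec:Methodology} (existence of the heteroclinic via Theorem~\ref{prop:UnstableManifold_Summary} and interval-Krawczyk on $h$, determination of $L_-$ via Theorem~\ref{prop:Bounding_L-} and Proposition~\ref{prop:L-}, rigorous propagation of the frame matrix, determination of $L_+$ via Proposition~\ref{prop:main2}, and the bisection count of zeros of $\det[\pi_1 U(x)]$), invoking Theorem~\ref{thm:prev-thm} at the end. If anything you are more explicit than the paper about how (H2) and especially (H4) get verified --- your observation that the nondegenerate Krawczyk zero of $h$ certifies transversality of $W^u(p_0)\cap W^s(p_1)$ inside $\mathcal{H}^0$, hence simplicity of the zero eigenvalue, is exactly the right point and the paper leaves it implicit.
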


We note that the specific parameters used in Theorem \ref{thm:SpecificParameters} are somewhat arbitrary, and the program used to produce the computer assisted proof  \cite{bib:codesConjugatePoints} is capable of performing the analogous stability calculation at other (individual) parameters. The parameters in Theorem \ref{thm:SpecificParameters} were selected so that the standing fronts $\varphi_{\pm,\pm}$ to the coupled equation could be computed using validated numerics, in particular when using the standing front to the uncoupled equation \eqref{E:bistable} as an initial approximation. Parameter continuation could be employed to investigate the existence of standing waves to \eqref{eq:SpecificPDE} and their spectral stability at parameters farther away from the uncoupled case, however this was not explored in this work. We also note that, even though we do not include it here, we did consider the corresponding example in the case $n = 2$, which produced similar results. Moreover, this example can be generalized in a relatively straightforward way to $n$ equations for $n > 3$.

\begin{proof}
The C++ code for the computer assisted proof of this proposition may be found in \cite{bib:codesConjugatePoints}. We summarize our approach. To calculate and prove the existence of these standing waves, it is equivalent to compute a connecting orbit to the ODE in \eqref{eq:HamiltonianODE}, which for our specific example is given by: 
	\begin{align*}
	u_1' &= v_1  ,
	&
	v_1' &= -b_1 f(u_1) - c_{12} g(u_1,u_2) \\
	u_2' &= v_2 ,
	&
	v_2' &= -b_2 f(u_2) - c_{12} g(u_2,u_1) - c_{23} g(u_2,u_3)\\
	u_3' &= v_3 ,
	&
	v_3' &= -b_3 f(u_3) - c_{23} g(u_3,u_2). 
	\end{align*}
	Define $p_0 = 0 \in \R^{6}$ and $ p_1 = ( 1, 1, 1, 0, 0, 0) \in \R^{6}$.  
	Then $ p_0,p_1$ are fixed points of \eqref{eq:HamiltonianODE} with equal energy $H(p_0)=H(p_1)=0$.  
	To  calculate and prove the existence of these standing waves, we use the methodology described in Section \ref{sec:ComputeStandingWave}. 
	
	For a fixed $L_- >0$, we use the methodology described in Section \ref{sec:L-Calculation} to calculate a rigorous enclosure of the three vectors which span  $\mathbb{E}^u_-(-L_-)$. 
	Furthermore, we show that 
	$\bE_{-}^{u}(x) \cap \cD = \{0\}$ for all $ x \leq -L_-$. 
	
	As described in Section \ref{sec:CalculatingE-u}, we may then solve the initial value problem  \eqref{E:main-sys},
	taking as initial conditions the three  vectors which span  $\mathbb{E}^u_-(-L_-)$, 
	and thus obtain a validated representation of  $\mathbb{E}^u_-(x)$ for $x \in [-L_-,L_+]$. 
	More precisely,  \eqref{E:main-sys} is given by
	$U_x = \tilde{\mathcal{A}} U$ for $  \tilde{\mathcal{A}}(x) = \left(\begin{smallmatrix} 0 & D^{-1} \\ -\mathcal{M}(x) & 0 \end{smallmatrix}\right)$ as in \eqref{E:main-sys} where for our example we take $D$ as the identity matrix, and the matrix $ \mathcal{M}(x) =\nabla^2 G(\varphi(x))$ is defined for the standing wave  $\varphi = (u_1,u_2,u_3)$  as
	\[
	\mathcal{M}(x) := 
	\begin{pmatrix}
	b_1 \tilde{f}(u_1) +c_{12} \tilde{g}(u_2) & c_{12} \tilde{h}(u_1,u_2) & 0 \\
	c_{12} \tilde{h}(u_1,u_2)  & b_2 \tilde{f}(u_2)+c_{12} \tilde{g}(u_1)+c_{23} \tilde{g}(u_3) & c_{23} \tilde{h}(u_2,u_3)  \\
	0 &  c_{23} \tilde{h}(u_2,u_3) & b_3 \tilde{f}(u_3)  + c_{23} \tilde{g}(u_2)
	\end{pmatrix}
	\]
	and  the functions $\tilde{f}, \tilde{g},\tilde{h}$ are defined as
	\begin{align*}
	\tilde{f}(u)&= \partial_u f(u) 
	&
	\tilde{g}(u)&= \partial_w g(u,w) 
	&
	\tilde{h}(u,w)&=  \partial_u g(u,w) 
	\\
	&= 3u(1-u) - \tfrac{1}{2},
	&
	&= u(1-u),
	&
	&= -2(u-\tfrac{1}{2})(w-\tfrac{1}{2}). 
	\end{align*}
	
	We then use the methodology described in Section \ref{sec:CalculationForL+} to verify the hypothesis of Proposition \ref{prop:main2}, and thus prove that 
	$\bE_{-}^{u}(x) \cap \cD = \{0\}$ for all $ x \geq L_+$. 
	
	Lastly, we use the techniques described in Section \ref{sec:CountingConjugatePoints} to count all of the conjugate points on the interval $[-L_-,L_+]$.  
	As we have also proved that there are no conjugate points when $ x \leq -L_-$ nor when $ x \geq L_+$, we have thus obtained a precise count of all conjugate points on $ x \in \R$. 
	By Theorem \ref{thm:prev-thm}, the number of conjugate points is equal to the number of positive  eigenvalues in the point spectrum of $\varphi_{\pm,\pm}$. 	
\end{proof}

For all values of $c = (c_{12}, c_{23})$ we consider, the front exists within an intersection of stable and unstable manifolds in the spatial dynamics. Each of these manifolds is $3$-dimensional, and the zero energy surface is codimension-1. Therefore, we generically expect a one-dimensional intersection. However, for $c=0$ the intersection is $3$-dimensional. This corresponds to the fact that, when $c = 0$, the equations for the $u_{i}$ are uncoupled, and as noted above each of the three component fronts can be translated independently of the other; this leads to the eigenvalue at $\lambda = 0$ of geometric and algebraic multiplicity three. For $c \neq 0$, one of the zero eigenvalues must remain at zero, but the others should move. One could compute the location of these other two eigenvalues by perturbing from the $c =0$ case. However, such a perturbative result would only hold for $|c|$ sufficiently small, without a precise characterization of what ``sufficiently small" means. One benefit to using validated numerics to obtain this (in)stability result is that we are not required to have $c$ near zero. 


\section{Future Directions}\label{S:future-directions}

One natural direction to explore next would be the extension of such results to a more general class of PDEs than \eqref{E:gradrd}. To do so, one would first need a theoretical result showing that the number of unstable eigenvalues can indeed be computed by instead counting conjugate points. For example, such a result for a class of fourth-order PDEs can be found in \cite{Howard20}. Next, one would need to develop a method using validated numerics, similar to what we have done above, to actually count those conjugate points. This is the subject of current work. A much more difficult, but in some sense more interesting, extension would be to extend these ideas to systems in multiple spatial dimensions by building upon the work in \cite{BeckCoxJones20, BeckCoxJones21, CastelliGameiroLessard18, CoxJonesLatushkin16,CoxJonesMarzuola15, DengJones11}.

In addition, as mentioned above the pulse instability result in \cite{BeckCoxJones18} applies only to {\it symmetric} pulses in systems of the form \eqref{E:gradrd}. It would be interesting to construct an example of an asymmetric pulse in such a system that is stable, thus implying, together with the example of \S\ref{S:example1}, that the stability/instability dichotomy for fronts/pulses in scalar reaction-diffusion equation is really specific to the scalar case. This is also the subject of current work.

\section{Acknowledgments}

The authors wish to thank the Mathematical Sciences Research Institute in Berkeley, CA; this work was begun while the authors
were in residence there during Fall 2018. M.B.\ would like to acknowledge the support of NSF grant DMS-1907923 and an AMS Birman Fellowship. The authors would like to thank Maciej Capi\'nski for showing us the ropes of the CAPD library while at MSRI.


\bibliography{conj-pt-computation}
 
 
\end{document}